\documentclass[11pt]{article}
\usepackage{graphicx}
\usepackage{color}
\usepackage{amsmath}
\usepackage{amssymb}
\usepackage{amscd}
\usepackage{bbm}
\usepackage{tcolorbox}
\usepackage{fancyhdr,a4wide}
\usepackage{amsthm}

\newcommand{\R}{\mathbb{R}}
\newcommand{\inr}[1]{\left\langle #1 \right\rangle}

\newcommand{\E}{\mathbb{E}}

\newcommand{\PP}{\mathbb{P}}

\newcommand{\eps}{\varepsilon}

\newtheorem{Theorem}{Theorem}[section]
\newtheorem{Lemma}[Theorem]{Lemma}

\newtheorem{Proposition}[Theorem]{Proposition}

\theoremstyle{definition}
\newtheorem{Definition}[Theorem]{Definition}
\newtheorem{Remark}[Theorem]{Remark}

\newtheorem{Example}[Theorem]{Example}

\numberwithin{equation}{section}

\def\IND{\mathbbm{1}}

\def\IND{\mathbbm{1}}


\title{Optimal non-gaussian Dvoretzky-Milman embeddings}
\author{
Daniel Bartl\footnote{
University of Vienna, Faculty of Mathematics  (daniel.bartl@univie.ac.at)}
 \and
Shahar Mendelson\footnote{The Australian National University, Centre for Mathematics and its Applications and University of Warwick, Department of Statistics  (shahar.mendelson@anu.edu.au)}
}

\begin{document}
\maketitle

\begin{abstract}
We construct the first non-gaussian ensemble that yields the optimal estimate in the Dvoretzky-Milman Theorem: the ensemble exhibits almost Euclidean sections in arbitrary normed spaces of the same dimension as the gaussian embedding---despite being very far from gaussian (in fact, it happens to be heavy-tailed).
\end{abstract}

\section{Introduction} \label{sec:intro}

The Dvoretzky-Milman Theorem is one of most remarkable results in Asymptotic Geometric Analysis. Dvoretzky proved in \cite{dvoretzky1961results} that $\ell_2$ is \emph{finitely represented} in any infinite dimensional normed space: that any infinite dimensional normed space has subspaces of arbitrarily high (finite) dimension that are almost Euclidean in the \emph{Banach-Mazur sense} (for the definition of that distance see, e.g., \cite{artstein2015asymptotic}). Dvoretzky's proof was finite dimensional: he showed that there is $d=d(n,\eps)$ that tends to infinity with $n$ for any fixed $\eps>0$, such that every $n$-dimensional normed space has a subspace of dimension $d$ that is $1+\eps$ close to Euclidean. 
As it happens, his estimate on $d(n,\eps)$ was suboptimal, and was dramatically improved by Milman in his seminal work \cite{milman1971new}.

Milman showed that any finite dimensional normed space $(\R^n,\|\cdot\|)$ has a \emph{critical dimension} $d^\ast$, and a typical subspace of dimension $c(\eps)d^\ast$---with respect to the Haar measure on the Grassmann manifold of that dimension---is almost Euclidean. Moreover, every normed space is isometric to a space for which $d^\ast \geq c_1 \log n$.

The idea behind Milman's proof is based on the fact that Lipschitz functions are almost constant on the sphere. 
More accurately and in the gaussian version of the theorem, set  $d \leq d^\ast$ and let $G_1,\dots,G_d$ be independent copies of the standard gaussian random vector $G$ in $\R^n$.
One can  show that a typical realization of the function $x \mapsto \|\sum_{i=1}^d x_i G_i\|$ is almost constant on $S^{d-1}$. 
As a result, there is $\Lambda > 0$ such that for every $x \in \R^d$,
\begin{equation} \label{eq:dvor-0-intro}
(1-\eps)\|x\|_2 
\leq \Lambda^{-1} \left\| \sum_{i=1}^d x_i G_i \right\| 
\leq (1+\eps) \|x\|_2.
\end{equation}
Equation \ref{eq:dvor-0-intro} follows from two features of the gaussian vector: firstly, \emph{rotation invariance}, in particular that for any $x \in \R^d$, $\E \|\sum_{i=1}^d x_i G_i\| = \|x\|_2 \cdot \E \|G\|$; and secondly, that each random variable $\|\sum_{i=1}^d x_i G_i\|$ \emph{concentrates sharply} around its mean.

Since Milman's proof---some 50 years ago---the only random ensembles known to exhibit Euclidean subspaces of the optimal (critical) dimension in arbitrary normed spaces were the gaussian ensemble $x \to \sum_{i=1}^d x_i G_i$; and a $d$-dimensional subspace selected according to the Haar measure on the right Grassmann manifold.
Those were the only examples of random ensembles that satisfied both rotation invariance and suitable concentration. 
Our goal in what follows is to construct more general ensembles (in fact, ensembles that can be heavy-tailed) that exhibit Euclidean subspaces of the optimal dimension in arbitrary normed spaces.

\vspace{0.5em}
To formulate Milman's result, consider a normed space $F=(\R^n,\| \cdot \|)$ and denote its unit ball by $K$.
Thus, $K$ is a \emph{convex body}: it is a convex, compact, centrally-symmetric subset of $\R^n$ with a nonempty interior.
Denote by $\inr{\cdot,\cdot}$ the standard Euclidean inner product, and set
\[
K^\circ = \{ y \in \R^n : \inr{x,y} \leq 1 \ {\rm for \ every \ } x \in K \}
\]
to be the polar body of $K$.
Consider the standard gaussian random vector $G$ in $\R^n$, and define the critical dimension of $K$ by
\[
d^\ast(K)=\left(\frac{\E \|G\|}{\sup_{t \in K^\circ} \|t\|_2} \right)^2.
\]
Milman's result (in its gaussian version) is the following.

\begin{Theorem}
\label{thm:DM.classic} 
	For every $\eps>0$ there is a constant $c_1$ depending on $\eps$ and absolute constants $c_2,c_3$ such that the following holds.
	Let  $d\leq c_1(\eps) d^\ast(K)$ and set $G_1,\dots,G_d$ to be independent copies of $G$.
	If $A=\sum_{i=1}^d \inr{e_i,\cdot} G_i\colon\R^d\to\R^n$ then with  probability at least $1-2\exp(-c_2\eps^2 d^\ast(K))$,
	\begin{align*}
	(1-\eps)\E\|G\| \cdot \left( K\cap A\R^d \right)
	\subset AB_2^d
	\subset (1+\eps)\E\|G\| \cdot \left( K\cap A\R^d \right).
	\end{align*}
	Moreover, for any convex body $K$ there is  $T\in GL_n$ such that $d^\ast( TK) \geq c_3\log n$.
\end{Theorem}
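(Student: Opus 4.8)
The argument has two essentially independent halves: the embedding statement, and the positional (``moreover'') statement.

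\emph{The embedding.} The plan is to show that, with the stated probability, the random function $\theta\mapsto\|A\theta\|=\|\sum_{i=1}^d\theta_iG_i\|$ is within a factor $1\pm\eps$ of the constant $\E\|G\|$ on all of $S^{d-1}$, and then read off the inclusions of bodies. First I would fix $\theta\in S^{d-1}$ and record two facts. (i) By rotation invariance of $G$, the vector $\sum_i\theta_iG_i$ has the law of $\|\theta\|_2G=G$, so $\E\|A\theta\|=\E\|G\|$. (ii) Viewed as a function of the standard Gaussian vector $(G_1,\dots,G_d)\in\R^{nd}$, the map $(z_1,\dots,z_d)\mapsto\|\sum_i\theta_iz_i\|$ is Lipschitz with constant $b:=\sup_{t\in K^\circ}\|t\|_2$: indeed $b$ is the Lipschitz constant of $\|\cdot\|$ with respect to the Euclidean metric, since $\|u\|-\|u'\|\ls\sup_{t\in K^\circ}\inr{u-u',t}\ls b\|u-u'\|_2$, while $(z_i)\mapsto\sum_i\theta_iz_i$ is a contraction for $\|\theta\|_2=1$. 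Gaussian concentration of measure then gives $\PROB\bigl(\,\bigl|\,\|A\theta\|-\E\|G\|\,\bigr|>\tfrac\eps2\E\|G\|\,\bigr)\ls 2\exp(-\tfrac{\eps^2}{8}(\E\|G\|/b)^2)=2\exp(-\tfrac{\eps^2}{8}d^\ast(K))$. Next I would take an $\tfrac\eps4$-net $\cN$ of $S^{d-1}$ with $|\cN|\ls(12/\eps)^d$, apply a union bound over $\cN$, and note that the resulting failure probability $2(12/\eps)^d\exp(-\tfrac{\eps^2}{8}d^\ast(K))$ is at most $2\exp(-c_2\eps^2 d^\ast(K))$ with $c_2=\tfrac1{16}$ as soon as $d\ls c_1(\eps)d^\ast(K)$ with $c_1(\eps)\asymp\eps^2/\log(1/\eps)$.

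To pass from the net to the whole sphere I would use the standard device: on the good event put $L:=\sup_{\theta\in S^{d-1}}\|A\theta\|$, which is finite by compactness; approximating $y\in S^{d-1}$ by $\theta\in\cN$ within $\tfrac\eps4$ gives $\|Ay\|\ls\|A\theta\|+\tfrac\eps4L\ls(1+\tfrac\eps2)\E\|G\|+\tfrac\eps4L$, so taking the supremum over $y$ yields $L\ls(1+\eps)\E\|G\|$ for $\eps\ls1$, and feeding this back into the lower estimate gives $(1-\eps)\E\|G\|\ls\|Ay\|$ for all $y\in S^{d-1}$. Hence on this event $(1-\eps)\E\|G\|\,\|x\|_2\ls\|Ax\|\ls(1+\eps)\E\|G\|\,\|x\|_2$ for every $x\in\R^d$; in particular $A$ is injective, so each point of $A\R^d$ is $Ax$ for a unique $x$. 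The inclusions then follow: if $\|x\|_2\ls1$ then $\|Ax\|\ls(1+\eps)\E\|G\|$, so $Ax\in\bigl((1+\eps)\E\|G\|\,K\bigr)\cap A\R^d$, giving $AB_2^d\subset(1+\eps)\E\|G\|\,(K\cap A\R^d)$; and if $z=Ax\in(1-\eps)\E\|G\|\,(K\cap A\R^d)$ then $(1-\eps)\E\|G\|\,\|x\|_2\ls\|Ax\|=\|z\|\ls(1-\eps)\E\|G\|$ forces $\|x\|_2\ls1$, i.e.\ $z\in AB_2^d$. This is the asserted sandwich.

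For the positional statement I would take $T\in GL_n$ putting $K$ into John's position, so that $B_2^n\subset TK\subset\sqrt n\,B_2^n$ and hence $(TK)^\circ\subset B_2^n$, i.e.\ $\sup_{t\in(TK)^\circ}\|t\|_2\ls1$; it then suffices to prove $\E\|G\|_{TK}\gr c\sqrt{\log n}$ for $n$ above an absolute threshold (for smaller $n$ the claim is subsumed by the always-valid bound $d^\ast(K)\gr 2/\pi$, obtained by testing $\|G\|_K=\sup_{t\in K^\circ}\inr{G,t}$ against a vector $\theta\in K^\circ$ with $\|\theta\|_2=b$, which gives $\E\|G\|_K\gr\E|\inr{G,\theta}|=b\sqrt{2/\pi}$). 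Here I would invoke the Dvoretzky--Rogers lemma to obtain orthonormal $v_1,\dots,v_m$ with $m=\lceil n/2\rceil$ and $\|v_j\|_{TK}\gr\tfrac12$, and pick norming functionals $t_j\in(TK)^\circ$ with $\inr{v_j,t_j}=\|v_j\|_{TK}\gr\tfrac12$ and $\|t_j\|_2\ls1$. The crucial point is an anti-clustering estimate: if $t_{j_1},\dots,t_{j_p}$ lie in one Euclidean ball of radius $\tfrac18$, then $\inr{v_{j_q},t_{j_1}}\gr\tfrac12-\tfrac14=\tfrac14$ for each $q$, whence $1\gr\|t_{j_1}\|_2^2\gr\sum_{q=1}^p\inr{v_{j_q},t_{j_1}}^2\gr p/16$, so $p\ls16$. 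Consequently a maximal $\tfrac18$-separated sub-collection $\{t_j:j\in J\}$ of $t_1,\dots,t_m$ has $|J|\gr m/16\gr n/32$ elements, and Sudakov minoration applied to it (these points lying in $(TK)^\circ$) gives $\E\|G\|_{TK}\gr\E\max_{j\in J}\inr{G,t_j}\gr c'\sqrt{\log n}$. Therefore $d^\ast(TK)=\left(\E\|G\|_{TK}/\sup_{t\in(TK)^\circ}\|t\|_2\right)^2\gr c_3\log n$.

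The embedding half is routine concentration of measure, and I anticipate no real difficulty there: rotation invariance fixes the mean, Gaussian concentration controls the fluctuations at the scale set by $d^\ast(K)$, and a net finishes it. The substance---the classical heart of the theorem---is the positional statement, and within it the key input is that John's position forces \emph{many, mutually separated} near-contact functionals (the Dvoretzky--Rogers phenomenon), which is precisely what lifts the Gaussian mean width of $TK$ to order $\sqrt{\log n}$. The step I would be most careful about is this anti-clustering estimate together with the correct quantitative form of Sudakov minoration.
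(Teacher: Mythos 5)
Your proof is correct, but note that the paper does not supply its own proof of Theorem \ref{thm:DM.classic} --- it cites it as classical, attributing the optimal $c_1(\eps)\sim\eps^2$ to Gordon \cite{gordon1985some} (and Schechtman \cite{schechtman1989remark}), and pointing to Pisier \cite{pisier1986probabilistic} for the simpler Milman-style argument that yields $c_1(\eps)\sim\eps^2/\log(1/\eps)$. Your proposal reproduces precisely that second route: rotation invariance fixes the conditional mean at $\E\|G\|$; the map $(z_1,\dots,z_d)\mapsto\|\sum_i\theta_i z_i\|$ is $\mathcal{R}(K^\circ)$-Lipschitz on $\R^{nd}$, so Gaussian concentration controls fluctuations at the scale $\eps\E\|G\|$ with failure probability $\exp(-c\eps^2 d^\ast(K))$; and the $\eps/4$-net together with the standard successive-approximation bootstrap extends the two-sided estimate to all of $S^{d-1}$. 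The passage from the uniform estimate to the sandwich of bodies, and the positional statement via John's position, the Dvoretzky--Rogers lemma, the Bessel anti-clustering step, and Sudakov minoration, are likewise standard and correctly executed. The only caveat worth flagging is that your argument gives the slightly suboptimal dependence $\eps^2/\log(1/\eps)$ rather than the optimal $\eps^2$ (which requires Gordon's Gaussian min-max comparison or Schechtman's refinement); since the theorem as stated only asks for \emph{some} $c_1(\eps)$, this is not a gap.
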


\begin{Remark}
\label{rem:critical}
Clearly, for any $T \in GL_n$ the normed spaces whose unit balls are $K$ and $TK$ are isometric. Hence, for the purpose of finding almost Euclidean sections of $K$ we may and do assume without loss of generality that $d^\ast(K) \geq c \log n$.
\end{Remark}	

The proof of Theorem  \ref{thm:DM.classic}  with the optimal $c_1(\varepsilon)\sim\varepsilon^2$ is due to Gordon \cite{gordon1985some} (see also Schechtman's proof \cite{schechtman1989remark}).
A simpler argument that follows Milman's original proof can be found in \cite{pisier1986probabilistic}, leading to the slightly suboptimal estimate of $c_1(\eps) \sim \frac{\eps^2}{\log(1/\eps)}$.
A detailed survey on the Dvoretzky-Milman Theorem can be found in \cite{artstein2015asymptotic}.

The key features of the gaussian ensemble $A$ that lead to the proof of Theorem \ref{thm:DM.classic}, namely, ``gaussian concentration'' and rotation invariance are  rather special.
At least one of the two  fails when it comes to other natural choices of random vectors.
For example, if $A$ is generated by the Bernoulli random vector $(\eps_i)_{i=1}^n$---uniformly distributed in $\{-1,1\}^n$---, and $\|\cdot\|=\|\cdot\|_\infty$, then $x\mapsto \E\|Ax\|$ is not even equivalent to a constant on $S^{d-1}$. The reason for the gap between $\inf_{x \in S^{d-1}} \|Ax\|_\infty$ and $\sup_{x \in S^{d-1}} \|Ax\|_\infty$ is straightforward: if $x=e_i$, $\|Ax\|_{\infty}=\|(\eps_1,...,\eps_n)\|_{\infty}=1$, but if $x$  is in a `diagonal direction', $\|Ax\|_\infty$ is close to $\|G\|_\infty$, and for a typical realization the latter behaves like $\sqrt{\log n}$ (see, e.g., \cite{mendelson2022isomorphic} for a detailed proof).
Also, it is standard to verify that $\|Ax\|$ does not exhibit gaussian concentration around its mean for every fixed $x \in S^{d-1}$ (see \cite{huang2021dimension}).

Given the difficulty in finding ensembles that satisfy the two features, it is natural to ask whether the gaussian ensemble is really the only \emph{optimal Dvoretzky-Milman ensemble}.
That is,  are there other families of random operators that exhibit, \emph{for an arbitrary normed space} and with nontrivial probability, the same behaviour as the gaussian operator: exposing  Euclidean sections of the space's critical dimension.
There are examples of optimal ensembles for restricted classes of normed spaces (e.g.\ spaces with cotype 2, see \cite{mendelson2008subgaussian}), but none of them preforms well in an \emph{arbitrary} normed space.

\vspace{0.5em}
The \emph{key} to our construction of general Dvoretzky-Milman ensembles is factoring through an intermediate space: first use the random operator $\Gamma\colon \R^d\to\R^m$ (with $m=\mathrm{poly}(d)$) that has iid rows $(X_i)_{i=1}^m$ generated by a random vector $X$,
\[
\Gamma=\frac{1}{\sqrt{m}} \sum_{i=1}^m \inr{X_i,\cdot}e_i;
\]
and then apply the random operator
\[ D=\sum_{i=1}^m \inr{e_i,\cdot} Z_i \colon \R^m\to \R^n,\]
where $(Z_i)_{i=1}^m$ are independent copies of a random vector $Z$ in $\R^n$ that has iid coordinates; in particular, $D$ has iid entries.
We will always assume that $(X_i)_{i=1}^m$ and $(Z_i)_{i=1}^m$ are independent.

As a preliminary test case to this idea's validity, let $Z$ be the standard gaussian random vector in $\R^n$.
Clearly $D\Gamma$ has a chance of being an optimal Dvoretzky-Milman ensemble only if $\Gamma$ is an almost isometric Euclidean embedding of $\R^d$ in $\R^m$; otherwise, the embedding would fail even if $\|\cdot\|$ were the Euclidean norm. At the same time, even if $\Gamma$ is an isometry, $D\Gamma$ need not be an optimal Dvoretzky-Milman ensemble for an arbitrary subgaussian\footnote{A random vector $Z$ is called $(L-)$subgaussian if $\|\inr{Z,x}\|_{L_p}\leq  L\sqrt{p} \|\inr{Z,x}\|_{L_2}$ for every $x\in \R^n$ and $p\geq 2$.}, isotropic random vector $Z$: 
as noted previously, if $\Gamma=\mathrm{Id}$, $Z$ is the Bernoulli vector and $\|\cdot\|=\|\cdot\|_\infty$, there is a logarithmic gap between $\inf_{x \in S^{m-1}} \|D\Gamma x\|$ and $\sup_{x \in S^{m-1}} \|D\Gamma x\|$, and the resulting embedding is not even isomorphic with an absolute constant.

\vspace{0.5em}
\begin{tcolorbox}
In a nutshell, we show that these are the only two restrictions.
Essentially, if the matrix $\Gamma$ has iid rows distributed according to a random vector that is  rotation invariant, and is an almost isometric embedding of $\ell_2^d$ in $\ell_2^m$, and $D\colon\R^m \to \R^n$ has iid entries and isotropic subgaussian columns, then with high probability, $(D\Gamma \R^d) \cap K$ is almost Euclidean.
\end{tcolorbox}

Before we list the assumptions that we require and specify the  choices of $d$ and $m$,  let us recall some well-known definitions and facts.

\begin{Definition}
A centred random vector $X$ in $\R^d$ satisfies $L_p-L_2$ norm equivalence with constant constant $L$ if for any $u \in \R^d$,
\begin{align}
\label{eq:def.norm.equiv}
 \|\inr{X,u}\|_{L_p} \leq L \|\inr{X,u}\|_{L_2}.
 \end{align}
Note that if, in addition, $X$ is isotropic (that is, $X$ is centred and its covariance is the identity), then \eqref{eq:def.norm.equiv} is equivalent to $\sup_{u\in S^{d-1}}\|\inr{X,u}\|_{L_p} \leq L$.
\end{Definition}

 Next, given $X_1,...,X_m$ that are independent, selected according to the isotropic random vector $X$, set
\[
\rho_{d,m}=\sup_{u \in S^{d-1}} \left|\frac{1}{m}\sum_{i=1}^m \inr{X_i,u}^2 -1 \right|.
\]
Observe that the singular values of the random matrix $\Gamma=m^{-1/2}\sum_{i=1}^m \inr{X_i,\cdot}e_i$ satisfy that
\[1 -\rho_{d,m} 
\leq \lambda_{\rm{min}}^2(\Gamma) 
\leq \lambda_{\rm{max}}^2(\Gamma) 
\leq 1+\rho_{d,m}.\]
The behaviour of $\rho_{d,m}$ is well-understood in rather general situations (see, e.g., \cite{adamczak2010quantitative,guedon2017interval,mendelson2014singular,tikhomirov2018sample}).
It turns out that under minimal assumptions on $X$ and with high probability, the random matrix $\Gamma$ satisfies the \emph{quantitative Bai-Yin asymptotics} (see \cite{bai1993limit}), namely that $\rho_{d,m} \leq c(d/m)^{1/2}$ for an absolute constant $c$.

\begin{tcolorbox}
The assumptions we require on the random vectors $X$ and $Z$ are as follows:
\begin{enumerate}
\item[(A1)]
 $X$ is an isotropic, rotation invariant random vector in $\R^d$ that satisfies $L_4-L_2$ norm equivalence with constant $L \geq 1$.

Moreover, with probability at least $1-\eta$,  $\rho_{d,m} \leq B(d/m)^{2/\alpha}$ for some constants $B\geq 1$ and $\alpha \geq 4$.
\item[(A2)]
 $Z$ is an isotropic, symmetric, $L$-subgaussian random vector in $\R^n$ that has iid coordinates.
\end{enumerate}
\end{tcolorbox}

A key component in our construction relies on the recent results of \cite{bartl2022structure}.  In a nutshell, the random set $\Gamma S^{d-1}$ has a rather specific structure: the coordinates of each vector $(\inr{X_i,u})_{i=1}^m$ concentrate around a well-determined set of values, endowed by the probability distribution of $X$. More accurately, denote  by $a^\sharp$ the monotone non-decreasing rearrangement of a vector $a\in \R^m$ and set, for $1\leq s\leq m$,
\[
H_{s,m}=\sup_{u\in S^{d-1}} \sup_{|I|\leq s} \left( \frac{1}{m}\sum_{i\in I} \inr{X_i,u}^2 \right)^{1/2}.
\]
Let $F_{\inr{X,u}}^{-1}$ be the (right-)inverse distribution function of $\inr{X,u}$ and put, for $i=1,\dots,m$,
\[
\lambda_i^u=m\int_{(\frac{i-1}{m}, \frac{i}{m}]}  F_{ \inr{X,u} }^{-1}(p) \,dp.
\]

\begin{Proposition}[\cite{bartl2022structure}]
\label{rem:bound.H.via.S.W.trivial} 
	Then there are constants $c_1,c_2,c_3$ that depend on $L, B,\alpha$ such that the following holds.
	Suppose that Assumption (A1) is satisfied with the values $L, B,\alpha$.
	Then, for every $d\geq 1$ and $m\geq c_1 d$,  with probability at least $1-\eta-\exp(- d)$,
\[ \sup_{ u \in S^{d-1}} \left( \frac{1}{m}\sum_{i=1}^m \left| \inr{X_i,u}^\sharp -  \lambda_i^u \right|^2 \right)^{1/2}
\leq  c_2 \left(\frac{d}{m}\right)^{1/\alpha} \log\left(\frac{m}{d}\right), \]
and for every $1\leq s\leq m$,
\[
H_{s,m}
\leq c_3 \left( \left(\frac{s}{m}\right)^{1/4} +  \left(\frac{d}{m}\right)^{1/\alpha} \log\left(\frac{m}{d}\right) \right)  .
\]
\end{Proposition}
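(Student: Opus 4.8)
The plan is to reduce to a one-dimensional quantile estimate using the rotation invariance of $X$, to lift it to a bound uniform over $S^{d-1}$ by splitting each vector $(\inr{X_i,u})_{i=1}^m$ into a bounded ``bulk'' and a heavy ``extreme'' part, and finally to read off the $H_{s,m}$ bound from the rearrangement estimate together with elementary moment computations. First, rotation invariance of $X$ forces the law of $\inr{X,u}$ to be the same for every $u\in S^{d-1}$; writing $\xi$ for a random variable with that law, isotropy and the $L_4$--$L_2$ equivalence give $\E\xi^2=1$, $\E\xi^4\ls L^4$, and in particular the quantile function $Q=F_{\inr{X,u}}^{-1}$ and the numbers $\lambda_i^u=:\lambda_i$ do not depend on $u$. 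From $\PP(|\xi|>t)\ls L^4t^{-4}$ one obtains $|Q(p)|\ls L\,(\min\{p,1-p\})^{-1/4}$, hence $\frac1m\sum_{i\ls s}\lambda_i^2+\frac1m\sum_{i>m-s}\lambda_i^2\ls L^2(s/m)^{1/2}$ and $\frac1m\sum_{i=1}^m\lambda_i^2\ls\int_0^1Q^2=1$. Granting the first display, the bound on $H_{s,m}$ is then immediate: for fixed $u$ the worst $I$ with $|I|\ls s$ selects the indices of the $s$ largest values of $|\inr{X_i,u}|$, so $\frac1m\sum_{i\in I}\inr{X_i,u}^2\ls\frac2m\sum_{i=1}^m|\inr{X_i,u}^\sharp-\lambda_i|^2+\frac2m\big(\sum_{i\ls s}+\sum_{i>m-s}\big)\lambda_i^2$, and the two terms are $\ls c(d/m)^{2/\alpha}\log^2(m/d)$ and $\ls L^2(s/m)^{1/2}$ respectively. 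So everything reduces to the first display.

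For the first display, fix a truncation level $\tau>0$ (optimised at the end), put $\phi_\tau(t)=\sgn(t)\min\{|t|,\tau\}$, and let $\mu_i^\tau$ be the analogues of the $\lambda_i$ for $\phi_\tau(\xi)$. The monotone rearrangement map is a contraction on $\ell_2^m$, so $\|(\inr{X_i,u})_i^\sharp-(\phi_\tau(\inr{X_i,u}))_i^\sharp\|_2\ls\big(\sum_i(|\inr{X_i,u}|-\tau)_+^2\big)^{1/2}$, while $\sum_i|\lambda_i-\mu_i^\tau|^2$ is deterministic and of the right order; thus, up to absolute constants,
\[
\frac1m\sum_i|\inr{X_i,u}^\sharp-\lambda_i|^2\ \ls\ \frac1m\sum_i|\phi_\tau(\inr{X_i,u})^\sharp-\mu_i^\tau|^2\ +\ \frac1m\!\!\sum_{i:\,|\inr{X_i,u}|>\tau}\!\!\inr{X_i,u}^2\ +\ (\mathrm{deterministic}).
\]
The bulk term is handled by a net over $S^{d-1}$: the coordinates $\phi_\tau(\inr{X_i,u})$ are bounded by $\tau$, and (again because rearrangement is a contraction and $\phi_\tau$ is $1$--Lipschitz) $u\mapsto(\phi_\tau(\inr{X_i,u}))_i^\sharp$ is $\big(\sum_i\|X_i\|_2^2\big)^{1/2}$--Lipschitz, with $\sum_i\|X_i\|_2^2\ls2md$ off an event of probability $e^{-cd}$. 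Hence for each fixed $u$ a bounded-differences argument (changing one $X_i$ moves the sorted vector by $\ls2\tau$ in $\ell_2$) gives subgaussian concentration of $\frac1m\sum_i|\phi_\tau(\inr{X_i,u})^\sharp-\mu_i^\tau|^2$ about its mean, the mean being governed by the one-dimensional rate for the empirical Wasserstein distance of a bounded random variable; a $\delta$--net of $S^{d-1}$ of size $\exp(O(d\log(m/\delta)))$ together with the Lipschitz bound upgrades this to a uniform estimate, and this is where the logarithmic factor appears.

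It remains to bound $\sup_{u\in S^{d-1}}\frac1m\sum_{i:|\inr{X_i,u}|>\tau}\inr{X_i,u}^2$. I would peel dyadically and, at each level, control the number of coordinates of $\Gamma u$ exceeding that level uniformly in $u$ --- this is exactly what Assumption (A1) is designed to provide, and is essentially equivalent to the $H_{s,m}$ bound itself. The argument combines, for a fixed coordinate support $J$ of the expected cardinality, the $L_4$--$L_2$ moment bound for $\frac1m\sum_{i\in J}\inr{X_i,u}^2$, a union bound over the $\binom m{|J|}$ such supports, a $\tfrac12$--net of $S^{d-1}$, and the quantitative Bai--Yin input $\rho_{d,m}\ls B(d/m)^{2/\alpha}$; optimising $\tau$ then balances this term against the bulk term and produces the claimed rate $(d/m)^{1/\alpha}\log(m/d)$. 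I expect this last step to be the main obstacle: one needs a bound on the large coordinates of $\Gamma u$ that is uniform over $S^{d-1}$ and sharp enough to beat the combined metric entropy of the sphere and of the sparse coordinate supports; the heavy tails preclude any in-expectation statement --- which is what forces the high-probability formulation --- and it is precisely here that the interplay between $L$, $B$ and $\alpha$ fixes the final exponent.
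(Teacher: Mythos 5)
This proposition is imported from \cite{bartl2022structure}; the present paper contains no proof of it, so there is no ``paper's own proof'' to compare against. Judging your sketch on its own merits: the reduction of the $H_{s,m}$ estimate to the first display is correct. The worst set $I$ of size $s$ lives among the extremal indices of the rearranged vector, $a^2 \leq 2(a-\lambda)^2 + 2\lambda^2$, and the quantile/Jensen computation $\frac{1}{m}\sum_{i\leq s}\lambda_i^2 \leq \int_0^{s/m} Q^2(p)\,dp \leq 2 L^2 (s/m)^{1/2}$ (valid since $L_4$--$L_2$ equivalence gives $\PP(|\xi|>t)\leq L^4 t^{-4}$, hence $|Q(p)|\leq L\min\{p,1-p\}^{-1/4}$) yields the $(s/m)^{1/4}$ term. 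The truncation split, the $\ell_2$-contractivity of monotone rearrangement, and the Lipschitz/net argument for the bulk are all plausible in outline.

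The genuine gap is the tail term $\sup_{u}\frac{1}{m}\sum_{i : |\inr{X_i,u}|>\tau}\inr{X_i,u}^2$, which you flag yourself as the main obstacle. As written, your argument is circular: you derive $H_{s,m}$ from the first display, yet you concede that controlling this tail term in the proof of the first display ``is essentially equivalent to the $H_{s,m}$ bound itself.'' One of the two must be established unconditionally to close the loop, or you must run an explicit bootstrap (start from the trivial $H_{s,m}\leq\sqrt{1+\rho_{d,m}}$, feed it into the tail term, improve, iterate), and neither is done. Moreover the mechanism you propose---an $L_4$ moment bound for a fixed sparse support $J$, a union bound over the $\binom{m}{|J|}$ supports, and a net on the sphere---cannot close as stated: $L_4$--$L_2$ equivalence gives only polynomial individual tails for $\frac{1}{m}\sum_{i\in J}\inr{X_i,u}^2$, whereas the union bound over roughly $\exp\left(s\log(em/s)+Cd\right)$ events requires exponential tails. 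The heavy-tailed setting forces a different device (a small-ball/median argument, or a direct high-probability bound on the number of large coordinates that avoids per-support union bounds). Your decomposition is the right shape, but the dyadic peeling needs an actual mechanism rather than a list of ingredients before this can be called a proof.
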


Denote by $\Omega(\mathbb{X})$ the event in which both the assertion of  Proposition \ref{rem:bound.H.via.S.W.trivial} holds and $\rho_{d,m}\leq  B(d/m)^{2/\alpha}$.

\vskip0.4cm
Note that if $X$ is rotation invariant then $\lambda^u=\lambda^v$ for every $u,v\in S^{d-1}$. Thus, Proposition \ref{rem:bound.H.via.S.W.trivial} implies that after replacing the vectors in $\Gamma S^{d-1}$  by their monotone non-decreasing ordering, the resulting set has a small Euclidean diameter.
This is the key feature that `generates' sufficient invariance.
As a result there is no need to assume that the random vector $Z$ is rotation invariant.

\vskip0.4cm
We are now ready to formulate our main result.
Recall that $\Gamma\colon \R^d\to\R^m$ is the random operator whose rows are $(X_i)_{i=1}^m$ and  $D\colon  \R^m\to \R^n$ is the random operator whose  columns are $(Z_i)_{i=1}^m$.
And,  assume (as we may---see Remark \ref{rem:critical}) that  $d^\ast(K)\geq C_0\log n$ for an absolute constant $C_0$.

\begin{tcolorbox}
\begin{Theorem}
\label{thm:DM-main}
There are constants $c_1\leq \frac{1}{2}$, $c_2,\dots,c_8$ that depend on $L,B,\alpha,C_0$ such that the following holds.
Suppose that Assumptions (A1) and (A2) are satisfied with the values  $L,B,\alpha$.
Let $\varepsilon\leq c_1$,  $n\geq c_2$,   
\[ m=c_3  (d^\ast(K))^{\alpha}
\quad\text{and}\quad
d\leq c_4 \frac{\varepsilon^2}{\log(1/\varepsilon)} d^\ast(K).\]
 
Then there is some $\Lambda \in [c_5 \E\|G\|, c_6\E\|G\|]$ such that for any realization $(X_i)_{i=1}^m \in \Omega(\mathbb{X})$, with $\PP_Z$-probability at least $1-2\exp(-c_7\eps^2 d^\ast(K))$,
\[
\sup_{u \in S^{d-1}} \left|\frac{\|D\Gamma u\|}{\Lambda}-1 \right| \leq c_8\eps.
\]
\end{Theorem}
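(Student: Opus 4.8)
\medskip\noindent\emph{Proof plan.}
Write $R=\sup_{t\in K^\circ}\|t\|_2$, so that $d^\ast(K)=(\E\|G\|/R)^2$; since the support function of $K^\circ$ is $\|\cdot\|$, one has $\E_g\sup_{t\in K^\circ}\inr{g,t}=\E\|G\|$ for a standard gaussian $g$. Let $M$ be the $m\times n$ matrix with rows $Z_1,\dots,Z_m$; as $Z$ has iid coordinates, $M$ has iid mean-zero variance-one $L$-subgaussian entries and $\|Dv\|=\sup_{t\in K^\circ}\inr{v,Mt}$ for $v\in\R^m$. Fix a realization $(X_i)_{i=1}^m\in\Omega(\mathbb{X})$, and for $u\in S^{d-1}$ put $v_u:=\Gamma u=m^{-1/2}(\inr{X_i,u})_{i=1}^m$, with $v_u^\sharp$ its non-decreasing rearrangement. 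Since $X$ is rotation invariant, $\lambda^u=\lambda$ is independent of $u$, and on $\Omega(\mathbb{X})$ the structure result of \cite{bartl2022structure} yields $\|v_u\|_2\asymp1$, $\|v_u\|_\infty\le H_{1,m}$, and $\sup_{u}\|v_u^\sharp-\lambda/\sqrt m\|_2\le r:=c_2(d/m)^{1/\alpha}\log(m/d)$. Set $\Lambda:=\E_Z\|D(\lambda/\sqrt m)\|$. The engine of the proof is the identity
\[
\E_Z\|D\Gamma u\|=\E_Z\left\|\sum_i v_{u,i}Z_i\right\|=\E_Z\left\|\sum_i v^\sharp_{u,i}Z_i\right\|,
\]
valid because the columns $Z_i$ of $D$ are iid, so reindexing them by the sorting permutation of $v_u$ does not change $\E_Z$. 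Hence $\E_Z\|D\Gamma u\|$ depends on $u$ only through the \emph{sorted} vector $v_u^\sharp$, which is uniformly within $\ell_2$-distance $r$ of the \emph{fixed} vector $\lambda/\sqrt m$. It therefore suffices to prove: \textbf{(I)} $\sup_u|\E_Z\|D\Gamma u\|-\Lambda|\le C\eps\,\E\|G\|$; \textbf{(II)} $\sup_u|\,\|D\Gamma u\|-\E_Z\|D\Gamma u\|\,|\le C\eps\,\E\|G\|$ with $\PP_Z$-probability $\ge1-2e^{-c_7\eps^2 d^\ast(K)}$; and \textbf{(III)} $c_5\E\|G\|\le\Lambda\le c_6\E\|G\|$. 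Combining them gives the theorem (with $c_8\eps$ in place of $\eps$). Note we may assume $d\ge1$, whence $d^\ast(K)\gtrsim\eps^{-2}$, and we take $c_2,C_0$ large so that $d^\ast(K)$ exceeds the absolute constants implicitly needed below.

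\textbf{Step 1 (proof of (I) and an a priori bound).}
Since $|\E_Z\|D\Gamma u\|-\Lambda|\le\E_Z\|\sum_i e_{u,i}Z_i\|$ with $e_u:=v_u^\sharp-\lambda/\sqrt m$, it is enough to bound $\E_Z\sup_u\|\sum_i e_{u,i}Z_i\|=\E_Z\sup_{u,\,t\in K^\circ}\inr{e_u,Mt}$. The set $E:=\{e_u:u\in S^{d-1}\}$ has Euclidean radius $\le r$, and gaussian width $w(E)\le\sqrt{2d}$: indeed $w(E)=\E_g\sup_u\inr{g,v_u^\sharp}$ (the shift is killed by the mean), and since sorting is a contraction on $\ell_2^m$ the gaussian process $u\mapsto\inr{g,v_u^\sharp}$ has $L_2$-increments $\|v_u^\sharp-v_{u'}^\sharp\|_2\le\|\Gamma\|_{\mathrm{op}}\|u-u'\|_2$, so Sudakov--Fernique bounds it by $\|\Gamma\|_{\mathrm{op}}\E_h\sup_{u\in S^{d-1}}\inr{h,u}\le\sqrt{2d}$ (using $\|\Gamma\|_{\mathrm{op}}\le\sqrt{1+\rho_{d,m}}\le\sqrt2$ on $\Omega(\mathbb{X})$). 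Chevet's inequality for the iid-entry matrix $M$ then gives
\[
\E_Z\sup_u\left\|\sum_i e_{u,i}Z_i\right\|\ \lesssim_L\ r\,\E\|G\|+R\,w(E)\ \lesssim_L\ \E\|G\|\bigl(r+\sqrt{d/d^\ast(K)}\bigr).
\]
With $m=c_3(d^\ast(K))^\alpha$, $\alpha\ge4$, and $d\le c_4\eps^2 d^\ast(K)/\log(1/\eps)$ one has $\sqrt{d/d^\ast(K)}\le\sqrt{c_4}\,\eps$ and, using $d^\ast(K)\gtrsim\eps^{-2}$, $r\lesssim_\alpha(d^\ast(K))^{1/\alpha-1}\log d^\ast(K)\lesssim_\alpha\eps^{3/2}\log(1/\eps)\le\eps$ once $c_1$ is small enough; this proves \textbf{(I)}. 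Running the \emph{same} Chevet estimate on $\Gamma S^{d-1}$ (radius $\|\Gamma\|_{\mathrm{op}}\le\sqrt2$, width $w(\Gamma S^{d-1})=\E_g\|\Gamma^\top g\|_2\le(m^{-1}\sum_i\|X_i\|_2^2)^{1/2}\le\sqrt{2d}$ on $\Omega(\mathbb{X})$) gives $\E_Z\sup_u\|D\Gamma u\|\lesssim_L\E\|G\|$; and since $\sup_u\|D\Gamma u\|$ is a supremum of sums of independent linear functionals of $(Z_i)$ whose individual summands $v_{u,i}\inr{Z_i,t}$ have $\psi_2$-norm $\le\|v_u\|_\infty L\|t\|_2\le H_{1,m}LR$ and weak variance $\le2R^2$, Talagrand's concentration inequality (we are in the subgaussian range since $H_{1,m}\sqrt{d^\ast(K)}\lesssim_L1$ on $\Omega(\mathbb{X})$) shows that $\Omega_1:=\{\sup_u\|D\Gamma u\|\le C_L\E\|G\|\}$ has $\PP_Z(\Omega_1)\ge1-e^{-c\,d^\ast(K)}$.

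\textbf{Step 2 (proof of (II)).}
On $\Omega_1$ the map $u\mapsto\|D\Gamma u\|$ is $C_L\E\|G\|$-Lipschitz on $S^{d-1}$, so it is enough to control $\|D\Gamma u\|$ over an $(\eps/C_L)$-net $\mathcal N$ of $S^{d-1}$, where $\log|\mathcal N|\lesssim_L d\log(1/\eps)\le C_L'c_4\,\eps^2 d^\ast(K)$. For a fixed $u$, $\|D\Gamma u\|=\sup_{t\in K^\circ}\sum_i v_{u,i}\inr{Z_i,t}$ is again a supremum of sums of independent linear functionals with weak variance $\|v_u\|_2^2R^2\le2R^2$ and individual $\psi_2$-norms $\le H_{1,m}LR$, so Talagrand's inequality in the subgaussian range yields
\[
\PP_Z\Bigl(\bigl|\,\|D\Gamma u\|-\E_Z\|D\Gamma u\|\,\bigr|>\eps\,\E\|G\|\Bigr)\ \le\ 2\exp\bigl(-c\,\eps^2(\E\|G\|/R)^2\bigr)\ =\ 2e^{-c\eps^2 d^\ast(K)}.
\]
Choosing $c_4$ small (depending on $L$) so that $\log|\mathcal N|$ is absorbed, a union bound over $\mathcal N$ together with $\Omega_1$ shows that with $\PP_Z$-probability $\ge1-2e^{-c_7\eps^2 d^\ast(K)}$ one has $|\,\|D\Gamma u\|-\E_Z\|D\Gamma u\|\,|\le\eps\,\E\|G\|$ for all $u\in\mathcal N$; approximating an arbitrary $u$ by a net point and using the Lipschitz bound on $\Omega_1$, then invoking \textbf{(I)}, proves \textbf{(II)}.

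\textbf{Step 3 (proof of (III), and the main obstacle).}
The upper bound is Chevet at the single point $\{\lambda/\sqrt m\}$: $\Lambda\lesssim_L\|\lambda/\sqrt m\|_2\,\E\|G\|\lesssim\E\|G\|$, since Jensen and isotropy give $\|\lambda/\sqrt m\|_2^2=m^{-1}\sum_i\lambda_i^2\le m^{-1}\sum_i m\!\int_{((i-1)/m,i/m]}(F_{\inr{X,u}}^{-1})^2=\E\inr{X,u}^2=1$. The lower bound $\Lambda\ge c_5\E\|G\|$ is the crux. On $\Omega(\mathbb{X})$ one also has $\|\lambda/\sqrt m\|_2\ge\|v_u^\sharp\|_2-r=\|\Gamma u\|_2-r\ge\tfrac12$, so $\E_g\|\sum_i(\lambda_i/\sqrt m)g_i\|=\|\lambda/\sqrt m\|_2\,\E\|G\|\asymp\E\|G\|$; hence it suffices to show that replacing the iid, isotropic, symmetric, $L$-subgaussian columns $Z_i$ by gaussian ones changes $\E\|\sum_i(\lambda_i/\sqrt m)Z_i\|$ by at most $\tfrac12\E\|G\|$. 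What makes this feasible is that $\|\lambda/\sqrt m\|_\infty=|\lambda_m|/\sqrt m\lesssim_L m^{-1/4}=(c_3(d^\ast(K))^\alpha)^{-1/4}$ is \emph{polynomially small in $1/d^\ast(K)$} — here $L_4$--$L_2$ norm equivalence gives $|\lambda_m|\lesssim_L m^{1/4}$, and $\alpha\ge4$. Feeding this smallness into a quantitative gaussian approximation for the linear process $t\mapsto\sum_i(\lambda_i/\sqrt m)\inr{Z_i,t}$ over $t\in K^\circ$ — carried out coordinatewise, which is legitimate since $Z$ has iid entries, with the smoothing scale tuned to the geometry of $K^\circ$ — forces the approximation error below $\E\|G\|$. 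I expect this last point, i.e.\ making the gaussian-approximation error $o(\E\|G\|)$ for an \emph{arbitrary} norm (and not merely in the direction realizing $\sup_{t\in K^\circ}\|t\|_2$, where a one-dimensional central limit theorem would suffice but only recovers $R\ll\E\|G\|$), to be the principal technical difficulty; the remaining ingredients are relatively routine consequences of the structure theorem of \cite{bartl2022structure} and of the inequalities of Chevet and Talagrand.
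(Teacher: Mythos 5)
Your overall architecture --- reducing to (I) near-constant conditional expectation, (II) concentration of $\|D\Gamma u\|$ around $\E_Z\|D\Gamma u\|$, (III) a two-sided estimate on $\Lambda$ --- matches the paper's Lemma \ref{lemma:reduction} exactly, and your Step~1 is essentially the paper's Section~3 done through Chevet and Sudakov--Fernique instead of the direct gaussian-domination bound of Lemma \ref{lem:permutation.controls}; that part is fine. The problems are in Steps~2 and~3.

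\textbf{Step 2 is the genuine gap.} You write that, because the individual summands $v_{u,i}\inr{Z_i,t}$ have $\psi_2$-norm at most $H_{1,m}LR$ and the weak variance is $\lesssim R^2$, ``Talagrand's inequality in the subgaussian range'' yields the clean tail $2\exp(-c\eps^2 d^\ast(K))$. No such inequality is available: Klein--Rio (and Bousquet, Talagrand) require the class to be \emph{uniformly bounded}, and the denominator then contains a term $a\,\E U$ where $a$ is that $L^\infty$ bound, not a $\psi_2$ bound. For merely subgaussian envelopes the best general-purpose inequalities (Adamczak-type) inject a secondary subexponential term with a constant that degrades as the deviation scale shrinks relative to $\E U$, and since here $x=\eps\E\|G\|\ll\E U\sim\E\|G\|$, that degradation is fatal unless you quantify it very carefully --- and you do not. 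The paper is explicit that this shortcut is unavailable: it cites \cite{huang2021dimension} precisely to warn that gaussian-type concentration for convex Lipschitz functions of subgaussian inputs is false in general, and then it \emph{manufactures} boundedness through the $\clubsuit_u,\diamondsuit_u,\heartsuit_u$ decomposition (removing the $s$ largest coordinates of $(\inr{X_i,u})_i$ and the $r$ largest $\|Z_i\|$), after which Klein--Rio legitimately applies to $\heartsuit_u$ with the explicit $L^\infty$ bound $a=H_{s,m}\phi(r)/\sqrt{s}$. Your heuristic ``we are in the subgaussian range since $H_{1,m}\sqrt{d^\ast(K)}\lesssim 1$'' is a correct-looking sanity check but is not a proof; the truncation is the mechanism that converts it into one. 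Without it, the deviation bound you assert is unproved, and it is the heart of the theorem.

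\textbf{Step 3 is incomplete and by a different (unrealized) route.} You correctly identify the lower bound $\Lambda\gtrsim\E\|G\|$ as the crux, and you propose a coordinatewise quantitative gaussian approximation over $t\in K^\circ$, conceding that making the error $o(\E\|G\|)$ for an arbitrary norm is ``the principal technical difficulty.'' The paper does not go that route at all: Theorem \ref{thm:lower.bound.expectation} first uses the small-ball / Paley--Zygmund estimate (Lemma \ref{lem:X.SB}) to find a set $I$ of $\delta m$ indices with $|\inr{X_i,v}|\geq\eta$, then applies Bernoulli contraction, reduces to the coordinate sums $Y_j=m^{-1/2}\sum_{i\in I}\inr{Z_i,e_j}$, and finally compares the gaussian process to the $Y$-process through an order-statistics comparison (Lemma \ref{lem:permutation.sorting.shuffeling}, Lemma \ref{lem:oder.stat}) using Montgomery-Smith's lower tail for Rademacher sums. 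This avoids any CLT-type smoothing entirely. Your sketch is a plausible alternative \emph{program}, but you have not carried it out, and as you note yourself it is the hard part.

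In short: the skeleton and Step~1 match the paper, but Step~2 as written rests on a non-theorem that the paper explicitly disclaims, and Step~3 is a declared open gap rather than a proof.
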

\end{tcolorbox}


\begin{Remark}
The assumption that $X$ satisfies $L_4-L_2$ norm equivalence is necessary: by \cite{bai1988note}, if $X$ has iid coordinates distributed according to a mean zero, variance $1$ random variable that is not bounded in $L_4$, then $\rho_{d,m} \to \infty$ as $d$ and $m$ tend to infinity while keeping their ratio constant.
Hence, without the norm equivalence assumption $\rho_{d,m}$ is not well-behaved and $\Gamma$ cannot even be an isomorphic embedding of $\ell_2^d$ in $\ell_2^m$.
At the same time, if $\Gamma$ satisfies the quantitative Bai-Yin asymptotics (and as noted previously, that is the case under minimal assumptions on $X$) then (A1) holds for $\alpha=4$.
\end{Remark}

\begin{Remark}
	The idea of factoring through a higher dimensional space was used in \cite{mendelson2022isomorphic} to prove the existence of \emph{isomorphic} non-gaussian Dvoretzky-Milman ensembles.
	The argument was based on establishing lower bounds on the expected suprema of certain Bernoulli processes.
	Such bounds are possible only up to a multiplicative constant, and thus cannot be used to prove that an ensemble yields an almost isometric embedding --- regardless of the choice of $X$.
\end{Remark}

\vspace{0.5em}
The rest of the article is devoted to the proof of Theorem \ref{thm:DM-main}.
Before we dive into technical details, let us give an example of such an ensemble, for a heavy-tailed choice of $X$ and   $Z=(\eps_1,...,\eps_n)$ --- the Bernoulli vector.

\begin{Example} \label{ex:DM.concrete}
Let $K$ be the unit ball of $(\R^n,\| \cdot \|)$  and set $d\leq c_0\frac{\eps^2 }{\log(1/\eps)}d^\ast(K)$.
Let $W$ be distributed uniformly in $S^{d-1}$ and consider a symmetric random variable $v$ for which $\E v^2=1$ and $\|v\|_{L_7} \leq L$ (but potentially $v\notin L_q$ for  $q>7$).

Setting $X=\sqrt{d} Wv$, it is straightforward to verify that $X$ is a centred, isotropic random vector that is rotation invariant and satisfies $L_7-L_2$ norm equivalence with  a constant $L'\sim L$.
By \cite{tikhomirov2018sample}, with probability at least $1-\frac{1}{d}$,
\[ \rho_{d,m} \leq c_2(L) \left( \max_{1\leq i\leq m} \frac{ \|X_i\|_2^2 }{m} + \sqrt \frac{d}{m} \right). \]
	Moreover, for  $m \geq  d^4$, by Markov's inequality,
	\begin{align}
	\label{eq:maximum.bound.dm.1.4}
	 \PP\left( \max_{1\leq i\leq m} \|X_i\|_2^2 \leq  L^2 \sqrt{dm} \right)
	\geq 1- m\PP\left(  |v| \geq L \left(\frac{m}{d}\right)^{1/4} \right)
	\geq 1-m\frac{d^{q/4}}{m^{q/4}}
	\geq 1- \frac{1}{d}
	\end{align}
	where we used that $q\geq 7$ in the last inequality.
	Hence, with probability at least $1-\frac{2}{d}$,  $\rho_{d,m}\leq  c_3(L) \sqrt{d/m}$, and Assumption (A1) is satisfied with $\eta=\frac{2}{d}$,  $\alpha=4$ and a constant $B$ depending only on $L$.	

Now put $m \sim (d^*(K))^4$ and consider a realization of $\Gamma$ for which the assertion of Proposition \ref{rem:bound.H.via.S.W.trivial} holds and $\rho_{d,m}\leq c_3\sqrt{d/m}$, i.e.\ $(X_i)_{i=1}^m\in\Omega(\mathbb{X})$.
By Theorem \ref{thm:DM-main}, for every such realization of $\Gamma$ and with high $\PP_Z$-probability,  the section $D\Gamma \R^d \cap K$ is $(1\pm c\eps)$-Euclidean.
In particular, $D\Gamma$ is an optimal Dvoretzky-Milman ensemble despite being `very far' from gaussian.
\end{Example}

\begin{Remark}
	Let us note that the set of Euclidean subspaces generated by our operator $D\Gamma$ significantly differs from those generated by the standard  approach.
	Have, for example,  the setting in  Example \ref{ex:DM.concrete}.
	We claim that the Haar measure of all $d$-dimensional subspaces generated by $D\Gamma$ is zero.
	
	Indeed, each subspace we generate is $d$-dimensional and contained in the span of $m$ vertices of the combinatorial cube in $\R^n$. 
	Since there are $\binom{2^n}{m}$ such $m$-dimensional subspaces,  it is enough to show that the probability (with respect to the Haar measure on the Grassmann manifold $G_{d,m}$) that a $d$-dimensional subspace of $\R^n$ is contained in a \emph{fixed} $m$-dimensional subspace of $\R^n$ is 0. 
	By rotation invariance, we may assume that the $m$-dimensional subspace is ${\rm span}(e_1,...e_m)$.
	Moreover, 
	\[ \PP_{ G_{d,m} } \left( E \in {\rm span}(e_1,...e_m) \right)
	=\PP_{O_n}\left( Oe_i \in {\rm span}(e_1,...e_m) \text{ for }1\leq i\leq d \right)\]
	where $\PP_{O_n}$ is the Haar measure on the orthonormal group $O_n$.
	But $Oe_i$ is distributed as $G/ \|G\|_2$, and almost surely all coordinate of the standard gaussian are nonzero.
\end{Remark}

\vspace{0.5em}
Recall that the operator $D$ in Example \ref{ex:DM.concrete}, generated by the Bernoulli vector, does not `act well' on the entire sphere $S^{m-1}$: if $\| \cdot \|=\| \cdot \|_\infty$, there is a logarithmic gap between $\inf_{x \in S^{m-1}} \|Dx\|$ and $\sup_{x \in S^{m-1}} \|Dx\|$.
However, $D$ acts well on the $d$-dimensional section of $S^{m-1}$ endowed by $\Gamma S^{d-1}$.
Thus, the meaning of $\Gamma S^{d-1}$ being in a \emph{good position} is that, regardless of the choice of $Z$ or of the normed space $(\R^n, \| \cdot \|)$, with high probability the oscillation of $x \to \|Dx\|$ on $\Gamma S^{d-1}$ is minimal.

The proof that a typical realization of $\Gamma S^{d-1}$ is indeed in a good position relies heavily on Proposition \ref{rem:bound.H.via.S.W.trivial}.
Obviously, the random operator $D\Gamma$ is far from being gaussian, but we shall show that thanks to Proposition \ref{rem:bound.H.via.S.W.trivial}, $D\Gamma$ still exhibits some key gaussian features.
Specifically, because vectors in $\Gamma S^{d-1}$ inherit $X$'s rotation invariance in the sense described in Proposition \ref{rem:bound.H.via.S.W.trivial}, the function $ x\mapsto \E_Z\| Dx\|$ is almost constant on $\Gamma S^{d-1}$.
And, in addition, Proposition \ref{rem:bound.H.via.S.W.trivial} is the crucial ingredient in establishing that for each $x\in \Gamma S^{d-1}$ the random variable $\|Dx\|$ exhibits `gaussian' concentration  around its mean (a behaviour that does not hold for an arbitrary $y \in S^{m-1}$).

\section{Preliminaries and a reduction step}

We start with a word about \emph{notation}. $\|\cdot\|_2$ is the  Euclidean norm  and  $\inr{\cdot,\cdot}$  is the standard inner product---though  in what follows we do not specify the (finite) dimension of the underlying space.
Throughout, $c,c_0,c_1,C,C_0,C_1,\dots$ are absolute  constants whose values may change from line to line.
If a constant $c$ depends on a parameter $a$,  we write $c=c(a)$; and if $cA \leq B \leq CA$ for absolute  constants $c$ and $C$, that is denoted by $A \sim B$.
The cardinality of finite sets is denoted by $|\cdot|$.
For two  independent random vectors $X$ and $Y$, $\E_X$ is the expectation with respect to $X$;  i.e., if $X$ is  distributed according to $\mu$ then $\E_X f(X,Y)=\int f(x,Y)\,\mu(dx)$.

\subsection{Preliminaries}
\label{sec:preliminary.obs}

Let us consider some implications of the assumptions made in Theorem \ref{thm:DM-main}.
We can and do assume that $\eps^2 d^\ast(K)\geq 1$---otherwise, the claim is trivially true.

Let $\varepsilon\leq c_0$ for a constant $c_0=c_0(L)\leq 1/2$.
The choice of $c_0$ is specified in Lemma \ref{lem:X.SB}.
Let $c_1$ and $c_2$ be well-chosen constants that depend only on $L$, and set
\begin{equation} \label{eq:s}
s=c_1\frac{d^\ast(K)}{\log(em/d^\ast(K))} ,
\qquad
d \leq c_2 \frac{\eps^2}{\log(1/\eps)}d^\ast(K).
\end{equation}
The choice of $c_1$ and $c_2$ is specified in Theorem \ref{thm:conc}.

Recall that $\Omega(\mathbb{X})$ is the event in which the assertion of Proposition \ref{rem:bound.H.via.S.W.trivial} holds and $\rho_{d,m}\leq B(d/m)^{2/\alpha}$.
Consider  $(X_i)_{i=1}^m\in\Omega(\mathbb{X})$.
Since $X$ is rotation invariant it follows that for every $u,v\in S^{d-1}$, $\lambda^u=\lambda^v=\lambda$; hence
\begin{align*}
\sup_{ u \in S^{d-1}} \left( \frac{1}{m}\sum_{i=1}^m \left| \inr{X_i,u}^\sharp - \lambda_i  \right|^2 \right)^{1/2}
&\leq  c(L,B) \left(\frac{d}{m}\right)^{1/\alpha} \log\left(\frac{m}{d}\right)  
\end{align*} 
and
\begin{align*}
H_{s,m}
&\leq c'(L,B)\left( \left(\frac{s}{m}\right)^{1/4} +  \left(\frac{d}{m}\right)^{1/\alpha} \log\left(\frac{m}{d}\right) \right).
\end{align*}
Thus, for any given constant  $\beta>0$, if
\begin{equation} \label{eq:cond-on-m}
m = c_3(c_1,c_2,L,B,\alpha,\beta) \cdot (d^\ast(K))^{\alpha},
\end{equation}
then
\begin{equation} 
\label{eq:SW-2-est-DM}
\frac{H_{s,m}}{\sqrt{s}}
\leq \frac{\beta}{d^\ast(K)} ,\qquad
\sup_{ u\in S^{d-1}} \left( \frac{1}{m}\sum_{i=1}^m \left| \inr{X_i,u}^\sharp - \lambda_i  \right|^2 \right)^{1/2}
\leq \frac{1}{\sqrt{ d^\ast}}
\leq\eps,
\end{equation}
and 
\begin{equation} \label{eq:SW-2-est-DM.2}
\sup_{ u\in S^{d-1}} \left( \frac{1}{m}\sum_{i=1}^m \left| \inr{X_i,u}^\sharp - \inr{X_i,v}^\sharp  \right|^2 \right)^{1/2}
\leq 2\varepsilon.
\end{equation}
The choice of  $\beta$ is specified in  Theorem \ref{thm:conc}.
Moreover, since $\rho_{d,m} \leq B(d/m)^{2/\alpha}$,  we may also assume that $\rho_{d,m}\leq 1$.
Thus,
\begin{align}
\label{eq:SW-2-est-norm}
\sup_{u\in S^{d-1}} \frac{1}{m} \sum_{i=1}^m \inr{X_i,u}^2 \leq 2.
\end{align}

Finally,   $d^\ast(K)\geq C_0 \log n$ for an absolute constant $C_0$.
We only consider $n$ sufficiently large, specifically,  $n\geq c_4(L,\alpha)$ for a suitable constant $c_4$ that is specified in Theorem \ref{thm:lower.bound.expectation}.

\subsection{A reduction step}

Let $W \subset S^{d-1}$ be  a maximal $\eps$-separated subset with respect to the Euclidean norm.
Recall that $Z$ is an isotropic, symmetric, $L$-subgaussian random vector taking  values in $\R^n$ and has iid entries\footnote{The fact that $Z$ has iid coordinates is used only in Section \ref{sec:exp.lower} where we prove condition \eqref{eq:conc-2} below.}, let  $(Z_i)_{i=1}^m$ be independent copies of $Z$ and define  $\Psi\colon\R^d\to\R$ by
\[
\Psi(u)= \left\|\frac{1}{\sqrt m} \sum_{i=1}^m \inr{X_i,u}Z_i \right\|.
\]

\begin{Lemma}
\label{lemma:reduction}
	Let $\eps\leq 1/2$ and assume that there are $v \in S^{d-1}$, $\eta\in[0,1]$, constants $c_1,c_2,c_3$, and an event ${\cal A}$ such that for $(X_i)_{i=1}^m\in \mathcal{A}$ the following holds.
\begin{enumerate}
\item[(1)]
\underline{Uniform concentration on the net:} with $\PP_Z$-probability at least $1-\eta$,
\begin{equation} \label{eq:conc-1}
\max_{w \in W} \left|  \Psi(w) - \E_Z\Psi(w) \right|
\leq c_1 \eps \E\|G\|.
\end{equation}
\item[(2)]
\underline{A lower bound on the conditional expectation:}
setting $\Lambda =\E_Z \Psi(v)$,
\begin{equation} \label{eq:conc-2}
\Lambda \geq c_2 \E \|G\|.
\end{equation}
\item[(3)]
\underline{Almost constant conditional expectation on the net:} 
\begin{equation} \label{eq:conc-3}
\max_{w\in W} \left| \E_Z \Psi(w)- \Lambda \right|
\leq c_3 \eps \E \|G\|.
\end{equation}
\end{enumerate}
Then, there is a constant  $c=c(c_1,c_2,c_3)$ such that for $(X_i)_{i=1}^m \in\mathcal{A}$, with $\PP_Z$-probability at least $1-\eta$,
\[
\sup_{u \in S^{d-1}} \left| \Psi(u) - \Lambda \right| \leq c \eps \Lambda.
\]
\end{Lemma}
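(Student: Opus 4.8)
The plan is to pass from control on the $\eps$-net $W$ to control on the whole sphere $S^{d-1}$ by a standard successive-approximation (chaining) argument, using the Lipschitz-type behaviour of $\Psi$ with respect to the Euclidean norm that is furnished by \eqref{eq:SW-2-est-norm}. First I would record the a priori bound: for any $u\in S^{d-1}$, since $\Gamma$ is an almost isometry (equivalently $\rho_{d,m}\le 1$), the vector $\Gamma u = \frac1{\sqrt m}\sum_i \inr{X_i,u}e_i$ has Euclidean norm at most $\sqrt 2$, and hence $|\Psi(u)-\Psi(u')|=\big|\,\|D\Gamma u\|-\|D\Gamma u'\|\,\big|\le \|D\Gamma(u-u')\|\le \|D\|_{\ell_2^m\to F}\cdot\|\Gamma\|_{\ell_2^d\to\ell_2^m}\cdot\|u-u'\|_2$. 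The operator norm $\|D\|_{\ell_2^m\to F}$ is an almost surely finite random quantity that does not depend on the net, so $\Psi$ is (random-)Lipschitz on $S^{d-1}$; call this Lipschitz constant $\Theta$. This makes the sup over $S^{d-1}$ well-defined and lets one approximate any $u$ by net points.

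Next I would run the telescoping argument. Fix $u\in S^{d-1}$. Since $W$ is a maximal $\eps$-separated set it is an $\eps$-net, so choose $w_0\in W$ with $\|u-w_0\|_2\le\eps$; then $u-w_0$ has norm at most $\eps$, and $(u-w_0)/\|u-w_0\|_2\in S^{d-1}$ is again $\eps$-approximated by some net point, etc. Writing $u = \sum_{k\ge 0} \alpha_k w_k$ with $w_k\in W$ and $|\alpha_k|\le\eps^k$ (the usual decomposition), one gets $\Psi(u)\le \sum_k |\alpha_k|\,\Psi(w_k)$ by the triangle inequality and homogeneity of the norm, and similarly a matching lower bound $\Psi(u)\ge \Psi(w_0)-\sum_{k\ge1}|\alpha_k|\Psi(w_k)$. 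On the event $\mathcal A$ intersected with the $\PP_Z$-event of probability $1-\eta$ from \eqref{eq:conc-1}, combine \eqref{eq:conc-1}, \eqref{eq:conc-2} and \eqref{eq:conc-3}: every net value satisfies $|\Psi(w)-\Lambda|\le (c_1+c_3)\eps\E\|G\|\le \frac{c_1+c_3}{c_2}\eps\Lambda$, so in particular $\Psi(w)\le(1+C\eps)\Lambda$ for all $w\in W$ with $C=(c_1+c_3)/c_2$. Feeding this into the telescoping bounds and summing the geometric series $\sum_{k\ge1}\eps^k\le 2\eps$ (using $\eps\le1/2$) yields
\[
\Psi(u)\le (1+C\eps)\Lambda + 2\eps(1+C\eps)\Lambda \le (1+c\eps)\Lambda
\]
and, for the lower bound, $\Psi(u)\ge \Psi(w_0) - 2\eps(1+C\eps)\Lambda \ge (1-C\eps)\Lambda - 2\eps(1+C\eps)\Lambda \ge (1-c\eps)\Lambda$, for a suitable $c=c(c_1,c_2,c_3)$. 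Taking the supremum over $u\in S^{d-1}$ gives the claim.

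One subtlety worth spelling out: the telescoping decomposition $u=\sum_k\alpha_kw_k$ requires that at each scale the normalized residual lands in $S^{d-1}$, which it does, and that the series converges, which follows from $|\alpha_k|\le\eps^k$ and $\eps<1$ together with the (finite, random) Lipschitz constant $\Theta$ controlling the tail $\|\sum_{k\ge N}\alpha_kw_k\|_2\to0$; thus $\Psi(u)=\lim_N\Psi(\sum_{k<N}\alpha_kw_k)$ and the termwise triangle inequality is justified in the limit. Nothing here uses the probabilistic structure of $Z$ beyond what is already packaged in hypotheses (1)--(3); the lemma is purely a deterministic net-to-sphere upgrade once those three events are in force.

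The main obstacle is essentially bookkeeping rather than a genuine difficulty: one must make sure the triangle-inequality-plus-homogeneity step is applied to $\|\cdot\|$ (not to $\Psi$ directly, which is not subadditive as a function of $u$ unless one unwinds it through $D\Gamma$ and the norm on $\R^n$), and that the geometric tail is summed on the correct side for both the upper and the lower estimate. Getting the constant $c$ to come out depending only on $c_1,c_2,c_3$ (and not on $\eps$ or dimensions) is automatic once $\eps\le1/2$ is used to bound $\sum_{k\ge1}\eps^k\le2\eps$ and $1/(1-\eps)\le2$.
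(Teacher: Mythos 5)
Your proof is correct and follows essentially the same route as the paper's: the successive-approximation decomposition $u=\sum_k \alpha_k w_k$ with $|\alpha_k|\le\eps^k$, linearity of $u\mapsto\frac{1}{\sqrt m}\sum_i\inr{X_i,u}Z_i$, and hypotheses (1)--(3) to pin down the net values $\Psi(w)$ to within $O(\eps)\Lambda$. The only difference is organizational---the paper first isolates $\sup_{w\in W}\Psi(w)\le c\Lambda$ and then bounds $|\Psi(u)-\Psi(w_0^u)|$, while you plug the net bound directly into both sides of the telescoped sum---but the content is identical. Your opening paragraph about the random Lipschitz constant $\Theta$ and the invocation of $\rho_{d,m}\le 1$ is unnecessary (the lemma as stated makes no assumption about $\rho_{d,m}$; the map $u\mapsto D\Gamma u$ is linear on $\R^d$ for any fixed realization, so continuity of $\Psi$ and convergence of the chaining series are automatic), but it introduces no error.
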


We show that (1)-(3) are satisfied with $\mathcal{A}=\Omega(\mathbb{X})$.
Specifically, (3) is established in Theorem \ref{thm:expectation.almost.constant}; (1) is verified  in Theorem \ref{thm:conc}; and (2) is proved in Theorem \ref{thm:lower.bound.expectation}.

\begin{proof}[Proof of Lemma \ref{lemma:reduction}]
	Since $W$ is an $\eps$-net in $S^{d-1}$, by a standard successive approximation argument, for every $u\in S^{d-1}$ there is a sequence $(w_j^u)_{j\geq 0}$ in $W$ such that
	\begin{align}
	\label{eq:u.approx.net}
	 u = \sum_{j=0}^\infty \gamma_j w_j^u,
	 \end{align}
	where $\gamma_0=1$ and $|\gamma_j| \leq \eps^j$ for $j \geq 1$.	
	Using the linearity of $\frac{1}{\sqrt m} \sum_{i=1}^m \inr{\cdot,X_i} Z_i$  it follows that
	\begin{align*}
	|\Psi(u)-\Psi(w_0^u)|
	\leq  2 \varepsilon \sup_{w\in W} \Psi(w).
	\end{align*}
	Let us show that $\sup_{w\in W} \Psi(w)\leq c\Lambda$ for a constant $c=c(c_1,c_2,c_3)$, and in particular
	\begin{align}
	\label{eq:Psi.approx.net}
	\sup_{u\in S^{d-1}}|\Psi(u)-\Psi(w_0^u)|
	\leq  2 c \varepsilon  \Lambda.
	\end{align}
	To that end, for $w\in W$
	\[\Psi(w)
	\leq |\Psi(w)-\E_Z\Psi(w)| + |\E_Z\Psi(w)- \Lambda| + \Lambda
	=(\ast)+ (\ast\ast) +\Lambda.\]
	By combining \eqref{eq:conc-1} and \eqref{eq:conc-2}, $(\ast)\leq \frac{c_1}{c_2}\eps\Lambda$;  and by invoking \eqref{eq:conc-2} and \eqref{eq:conc-3}, $(\ast\ast)\leq \frac{c_3}{c_2}\eps\Lambda$.
	
	Finally, set $u\in S^{d-1}$  and consider  $(w_j^u)_{j\geq 0}$ as in \eqref{eq:u.approx.net}.
	Using the estimates on $(\ast)$ and $(\ast\ast)$ again,
	\begin{align*}
	| \Psi(u)- \Lambda |
	&\leq 	| \Psi(u)-\Psi(w_0^u)|+	| \Psi(w_0^u) -  \E_Z \Psi(w_0^u)|+ 	|\E_Z\Psi(w_0^u)-\Lambda|\\
	&\leq c'(c_1,c_2,c_3)\eps\Lambda.
	\qedhere
	\end{align*}
\end{proof}


\section{The conditional expectation is almost constant---proof of \eqref{eq:conc-3}}

Let us show that for every realization $(X_i)_{i=1}^m$ that satisfies \eqref{eq:SW-2-est-DM.2}, the oscillation of $u\mapsto \E_Z \Psi(u)$ is at most of order $\varepsilon \E \|G\|$. 
To that end, define the distance $w_2$ on $\R^m$ by
\begin{align}
\label{eq:def.w2}
w_2(x,y)= \min_\sigma \left( \sum_{i=1}^m \left( x_i - y_{\sigma(i)} \right)^{2} \right)^{1/2} 
,
\end{align}
where the minimum is taken over all permutations $\sigma$ of $\{1,\dots,m\}$.

\begin{Remark}
The reason for the name $w_2$ is the obvious connection \eqref{eq:def.w2} has with  $\mathcal{W}_2$---the second order Wasserstein distance (see Remark \ref{rem:Wasserstein}), namely
\[
w_2(x,y)=\mathcal{W}_2\left(\frac{1}{m}\sum_{i=1}^m\delta_{x_i}, \frac{1}{m}\sum_{i=1}^m\delta_{y_i}\right).
\]
\end{Remark}

\begin{Lemma}
\label{lem:permutation.controls}
	There is an absolute constant $c$ such that for every $x,y\in\mathbb{R}^m$,
	\[ \left| \E \left\| \sum_{i=1}^m x_i Z_i \right\| - \E \left\| \sum_{i=1}^m  y_iZ_i \right\| \right|
	\leq c L \E \|G\|  \cdot w_2(x,y).  \]
\end{Lemma}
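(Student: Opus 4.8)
The plan is to reduce the statement to the case where $x$ and $y$ differ by a permutation, and then to the case where $x$ and $y$ differ by swapping two coordinates; the general case follows by a telescoping/decomposition argument using the symmetry and subgaussianity of $Z$. First I would observe that the left-hand side is invariant under permuting the summation index on either side: since $(Z_i)_{i=1}^m$ are i.i.d., $\E\|\sum_i x_i Z_i\| = \E\|\sum_i x_{\sigma(i)} Z_i\|$ for any permutation $\sigma$. Hence we may choose the permutation $\sigma$ that attains (or nearly attains) the minimum in the definition of $w_2(x,y)$, and assume without loss of generality that $w_2(x,y)^2 = \sum_{i=1}^m (x_i - y_i)^2 = \|x-y\|_2^2$. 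So it suffices to prove the cleaner bound
\[
\left| \E \left\| \sum_{i=1}^m x_i Z_i \right\| - \E \left\| \sum_{i=1}^m y_i Z_i \right\| \right|
\leq c L \, \E\|G\| \cdot \|x-y\|_2 .
\]

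Next, by the triangle inequality in $(\R^n,\|\cdot\|)$, the left side is at most $\E\|\sum_i (x_i - y_i) Z_i\|$, so setting $a = x - y \in \R^m$ it is enough to show $\E\|\sum_i a_i Z_i\| \leq c L \,\E\|G\| \cdot \|a\|_2$. This is exactly a comparison between the non-gaussian sum $\sum_i a_i Z_i$ and the gaussian vector $\|a\|_2 \cdot G$ (note $\sum_i a_i Z_i$ is isotropic up to scaling $\|a\|_2$, symmetric, and $L$-subgaussian, since the $Z_i$ are independent, symmetric, isotropic and $L$-subgaussian, and a linear combination of independent subgaussian vectors is subgaussian with the same constant up to normalization). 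The standard way to make this comparison is a symmetrization-plus-contraction argument: write $\E_Z\|\sum_i a_i Z_i\|$, introduce independent Rademacher signs $(\eps_i)$ and independent gaussians $(g_i)$, and use that for any fixed vectors $(v_i) \subset \R^n$,
\[
\E_\eps \left\| \sum_i \eps_i v_i \right\| \leq \sqrt{\tfrac{\pi}{2}} \, \E_g \left\| \sum_i g_i v_i \right\|,
\]
together with the subgaussian domination that lets one replace each $Z_i$ by a gaussian at the cost of the factor $L$. Concretely, one can condition on $(Z_i)$, symmetrize to get $\E\|\sum \eps_i a_i Z_i\|$, pass to gaussian multipliers, and then use the $L$-subgaussian hypothesis on $Z$ to bound the resulting quantity by $L \|a\|_2 \cdot \E\|G\|$; alternatively invoke a maximal-inequality/chaining comparison already available in the literature. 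The factor $\E\|G\|$ appears because $\E\|\sum_i g_i a_i Z_i\| = \E_Z \E_g \|\|(a_i Z_i)\|_{\text{appropriate}} G\|$-type identities reduce, after using $L$-subgaussianity coordinatewise, to $\|a\|_2$ times $\E\|G'\|$ where $G'$ is the standard gaussian in $\R^n$.

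The main obstacle I anticipate is getting the comparison constant to be exactly of the form $c L \,\E\|G\|$ with an absolute $c$, rather than something that degrades with dimension; the clean route is to note that $S := \sum_i a_i Z_i / \|a\|_2$ is an isotropic, symmetric, $L$-subgaussian vector in $\R^n$, and for any such vector $\E\|S\| \leq c L \,\E\|G\|$ — this is a well-known consequence of the subgaussian comparison (e.g. via the majorizing-measure/generic-chaining bound $\E\|S\| \asymp \gamma_2$-type functional, which is monotone in the subgaussian constant and equals $\E\|G\|$ up to constants for the gaussian). Care is needed that this uses only $L$-subgaussianity of $Z$ and not any lower bound, and that symmetry of $Z$ is what licenses the free insertion of Rademacher signs without a symmetrization loss. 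Once $\E\|S\| \leq cL\,\E\|G\|$ is in hand, rescaling by $\|a\|_2 = w_2(x,y)$ and unwinding the reductions above yields the claim. $\qed$
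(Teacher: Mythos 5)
Your proposal is correct and follows essentially the same route as the paper: fix the optimal permutation using that $(Z_i)$ are i.i.d., apply the triangle inequality, and then invoke gaussian domination (which the paper cites directly from Talagrand's majorizing measure theorem) to bound $\E\|\sum_i a_i Z_i\| \leq cL\|a\|_2\,\E\|G\|$. Your longer excursion into how one might establish that domination via symmetrization and contraction is unnecessary detail but not wrong; the paper simply cites the result.
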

\begin{proof}
	Fix $x,y\in\R^m$ and let $\sigma$ be a permutation of $\{1,\dots,m\}$.
	Note that $\sum_{i=1}^m  y_iZ_i $ has the same distribution as $\sum_{i=1}^m  y_{\sigma(i)} Z_i$, and
	by gaussian domination (which is an immediate consequence of Talagrand's majorizing measure theorem, see, e.g., \cite[Theorem 2.10.11]{talagrand}), if $G_1,\dots,G_m$ are independent copies of $G$ then for any $(w_i)_{i=1}^m\in \R^m$,
	\[ \E \left\| \sum_{i=1}^m  w_i Z_i \right\|
	\leq cL \E \left\| \sum_{i=1}^m  w_i G_i \right\|
	= cL \|w\|_2  \E \left\| G\right\|. \]
	Therefore,
\begin{align*}
	\E \left\| \sum_{i=1}^m  x_iZ_i \right\|
	&\leq  \E \left\| \sum_{i=1}^m  (x_i - y_{\sigma(i)})Z_i \right\| + \E \left\| \sum_{i=1}^m  y_{\sigma(i)} Z_i \right\| \\
	&\leq cL \left( \sum_{i=1}^m \left( x_i - y_{\sigma(i)} \right)^{2}\right)^{1/2} \E\|G\| +  \E \left\| \sum_{i=1}^m  y_iZ_i \right\|,
\end{align*}
	and the claim  follows.
\end{proof}

In view of Lemma \ref{lem:permutation.controls}, to ensure that $u\mapsto \E_Z \|D\Gamma u\|$ is almost constant on $S^{d-1}$ it suffices to show that the set
\[ \left(\Gamma S^{d-1}\right) = \left\{ \left( \frac{ \inr{X_i,u} }{ \sqrt m} \right)_{i=1}^m :  u\in S^{d-1}\right\} \]
has a small diameter with respect to the distance $w_2$.
That follows immediately from \eqref{eq:SW-2-est-DM.2}: indeed, for every $x,y\in \Gamma S^{d}$, $w_2(x,y)\leq 2\varepsilon$.

In particular, we have the following.

\begin{Theorem}
\label{thm:expectation.almost.constant}
	There is a constant $c$ that depends on $L$  such that for every $(X_i)_{i=1}^m\in \Omega(\mathbb X)$,
	\[  \sup_{u,v\in S^{d-1}}  \left| \E_Z\Psi(u) - \E_Z\Psi(v) \right|
	\leq c \eps \E\|G\|.
\]
\end{Theorem}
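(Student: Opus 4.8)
The plan is to obtain this as a short corollary of the two facts already established: the transportation-type Lipschitz bound of Lemma~\ref{lem:permutation.controls}, and the $w_2$-diameter estimate \eqref{eq:SW-2-est-DM.2} which holds on the event $\Omega(\mathbb X)$ (under the standing choice $m=c_3(d^\ast(K))^\alpha$ from \eqref{eq:cond-on-m}). First I would rewrite $\E_Z\Psi(u)=\E_Z\bigl\|\sum_{i=1}^m x^u_i Z_i\bigr\|$, where $x^u=\bigl(\inr{X_i,u}/\sqrt m\bigr)_{i=1}^m\in\R^m$, and invoke Lemma~\ref{lem:permutation.controls} to get, for all $u,v\in S^{d-1}$,
\[
\left|\E_Z\Psi(u)-\E_Z\Psi(v)\right|\leq cL\,\E\|G\|\cdot w_2(x^u,x^v).
\]
Thus it suffices to show $\sup_{u,v\in S^{d-1}}w_2(x^u,x^v)\leq 2\eps$ whenever $(X_i)_{i=1}^m\in\Omega(\mathbb X)$.

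Next I would record the elementary identity $w_2(x,y)=\|x^\sharp-y^\sharp\|_2$ for every $x,y\in\R^m$: expanding the square in \eqref{eq:def.w2} reduces the minimization to maximizing $\sum_i x_i y_{\sigma(i)}$ over permutations, which by the Hardy--Littlewood rearrangement inequality is attained by sorting both vectors monotonically (equivalently, optimality of the monotone coupling for quadratic cost on the line). Since $z\mapsto z/\sqrt m$ is monotone, $(x^u)^\sharp=\bigl(\inr{X_i,u}^\sharp/\sqrt m\bigr)_{i=1}^m$, and hence
\[
w_2(x^u,x^v)=\left(\frac1m\sum_{i=1}^m\left(\inr{X_i,u}^\sharp-\inr{X_i,v}^\sharp\right)^2\right)^{1/2}.
\]
This is precisely the quantity controlled in \eqref{eq:SW-2-est-DM.2}: using that $X$ is rotation invariant so that $\lambda^u=\lambda^v=\lambda$, the triangle inequality bounds the right-hand side by
\[
\left(\frac1m\sum_{i=1}^m\left|\inr{X_i,u}^\sharp-\lambda_i\right|^2\right)^{1/2}+\left(\frac1m\sum_{i=1}^m\left|\inr{X_i,v}^\sharp-\lambda_i\right|^2\right)^{1/2}\leq 2\eps,
\]
where the last inequality is the estimate in \eqref{eq:SW-2-est-DM}, valid on $\Omega(\mathbb X)$ by Proposition~\ref{rem:bound.H.via.S.W.trivial}. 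Combining the three displays gives $\sup_{u,v}\left|\E_Z\Psi(u)-\E_Z\Psi(v)\right|\leq 2cL\eps\,\E\|G\|$, which is the claim with a constant depending only on $L$.

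I do not expect a genuine obstacle here: the theorem is essentially a bookkeeping step, and all the substance lives upstream --- in Lemma~\ref{lem:permutation.controls} (gaussian domination via Talagrand's majorizing measure theorem) and in Proposition~\ref{rem:bound.H.via.S.W.trivial} (the structural result of \cite{bartl2022structure}). The only points deserving a moment's attention are the rearrangement identity $w_2(x,y)=\|x^\sharp-y^\sharp\|_2$, and the use of rotation invariance to pass through the common profile $\lambda$ so that the pairwise $w_2$-distance is controlled by the one-sided deviation estimate of \eqref{eq:SW-2-est-DM}.
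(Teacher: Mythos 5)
Your proposal is correct and follows exactly the paper's route: invoke Lemma~\ref{lem:permutation.controls} to reduce the oscillation of $\E_Z\Psi$ to the $w_2$-diameter of $\Gamma S^{d-1}$, and then bound that diameter by $2\eps$ via the $\sharp$-profile estimate coming from Proposition~\ref{rem:bound.H.via.S.W.trivial} on $\Omega(\mathbb X)$. The one place you go beyond the paper's prose is in spelling out the rearrangement identity $w_2(x,y)=\|x^\sharp-y^\sharp\|_2$ and the triangle inequality through the common profile $\lambda$; the paper treats this as immediate, and your justification is the right one.
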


\begin{Remark}
\label{rem:Wasserstein}
	The condition in Theorem \ref{thm:DM-main} that $X$ is rotation invariant can be relaxed in the following sense:
	For $u\in S^{d-1}$, let $\mu^u$ be the law of $\inr{X,u}$.
	The (second order) Wasserstein distance between two probability measures $\alpha,\beta$ on $\R$ with finite second moment is
	\[ \mathcal{W}_2(\alpha,\beta)
	= \inf_\pi \left(\int_{\R\times\R} (x-y)^2\,\pi(dx,dy) \right)^{1/2},
	\]
	where the infimum is taken over all probability measures $\pi$ on $\R\times \R$ with first marginal $\alpha$ and second marginal $\beta$.
	Using this notation, the assertion of Theorem \ref{thm:DM-main} remains valid under the assumption that  $\sup_{u,v\in S^{d-1}} \mathcal{W}_2(\mu^u,\mu^v)\leq \varepsilon$ (clearly, if $X$ is rotation invariant then for any $u,v\in S^{d-1}$, $\mathcal{W}_2(\mu^u,\mu^v)=0$).
	However, as optimising the choice of $X$ in this sense is  not the focus of the article, we shall not pursue this point further.
\end{Remark}

\section{Uniform concentration on the net---proof of  \eqref{eq:conc-1}}

The second component needed in the proof of Theorem \ref{thm:DM-main} is concentration on the net.
Recall that $W\subset S^{d-1}$ is a maximal $\eps$-separated subset with respect to the Euclidean norm, and we need to show that there is a number $\Lambda$ (that happens to be proportional to $\E\|G\|$), such that  for every realization $(X_i)_{i=1}^m\in\Omega(\mathbb{X})$, with high probability with respect $(Z_i)_{i=1}^m$,
\begin{equation} \label{eq:goal}
\sup_{u \in W} \left| \Psi(u) - \Lambda \right| 
\leq c(L) \eps \Lambda.
\end{equation}


\begin{Theorem} \label{thm:conc}
	There are constants $c_1,\dots, c_6,\beta$ depending only on $L$ such that the following holds.
	Let $\eps\leq 1/2$ and assume that 
	\begin{align}
	\label{eq:ass.on.m.in.DM.thm}
d\leq c_1 \frac{\eps^2}{\log(1/\eps)} d^\ast(K) \ \ \ {\rm and} \ \ \ m\in [c_2 d^\ast(K), c_3 \exp(d^\ast(K))].
	\end{align}
	Consider $s=c_4\frac{ d^\ast(K)}{\log(em/d^\ast(K))}$ and with that choice of $s$, fix a realization $(X_i)_{i=1}^m$ satisfying that
	\begin{align}
	\label{eq:ass.on.X.in.DM.thm}
	\begin{split}
	\sup_{u\in B_2^d}\frac{1}{m}\sum_{i=1}^m\inr{X_i,u}^2
	&\leq 2 	\quad\text{and}\\
	H_{ s ,m}
	=\sup_{u \in B_2^d} \max_{ |I|=s} \left(\frac{1}{m} \sum_{i \in I} \inr{X_i,u}^2 \right)^{1/2}
	&\leq \beta  \frac{ \sqrt s}{  d^\ast(K)} .
	\end{split}
	\end{align}
	Then with $\PP_Z$-probability at least $1-2\exp(-c_5\varepsilon^2 d^\ast(K))$,
	\[\max_{u \in W} \left|  \Psi(u) - \E_Z\Psi(u) \right|
	\leq c_6 \eps \E\|G\|.\]
\end{Theorem}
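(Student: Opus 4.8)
The plan is to fix a realization $(X_i)_{i=1}^m$ satisfying \eqref{eq:ass.on.X.in.DM.thm}, fix a point $u \in W$, and prove a single-point deviation inequality for $\Psi(u)=\|\frac{1}{\sqrt m}\sum_{i=1}^m \inr{X_i,u}Z_i\|$ around its $Z$-mean; then take a union bound over the net $W$, whose cardinality is at most $(3/\eps)^d$, and absorb the resulting $d\log(1/\eps)$ term into $\eps^2 d^\ast(K)$ using the hypothesis $d\leq c_1 \frac{\eps^2}{\log(1/\eps)} d^\ast(K)$. Writing $a=a(u)=\frac{1}{\sqrt m}(\inr{X_i,u})_{i=1}^m\in\R^m$, the condition \eqref{eq:ass.on.X.in.DM.thm} says exactly that $\|a\|_2^2\leq 2$ and that the largest $s$ coordinates of $a$ (in absolute value) have Euclidean norm at most $\beta\sqrt s/d^\ast(K)$; equivalently, $a$ is a vector with small Euclidean norm whose ``spikes'' are controlled. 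The map $Z:=(Z_1,\dots,Z_m)\mapsto \|\sum_i a_i Z_i\|$ is a norm of a linear image of the subgaussian vector with iid blocks $Z_i$, so one expects Gaussian-type concentration at scale $\|a\|_2 \cdot (\text{something})$, but the ``something'' has to be $\eps\E\|G\|$ and the naive Lipschitz bound only gives the diameter of $K$, which is far too large.

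The key step, and the main obstacle, is upgrading a crude Lipschitz/Gaussian-concentration estimate to one at the correct scale $\eps\E\|G\|$. The mechanism is a Bernstein-type split of the coordinates of $a$ into a ``peaky'' part supported on the $s$ largest coordinates and a ``spread'' part supported on the rest. On the spread part, each $|a_i|$ is at most (roughly) $\|a_{\text{tail}}\|_2/\sqrt s \leq \sqrt2/\sqrt s$, so $\frac{1}{\sqrt m}\sum_i a_i Z_i$ restricted there is a sum of independent mean-zero vectors each of small $\psi_2$ norm in the $\|\cdot\|$-geometry; one applies a vector-valued Bernstein / Talagrand concentration inequality (using the $L$-subgaussian hypothesis on $Z$ via gaussian domination as in Lemma \ref{lem:permutation.controls}, which reduces the relevant ``weak variance'' and ``strong term'' to quantities of the form $\|a\|_\infty\cdot\E\|G\|$ and $\|a\|_2\cdot\E\|G\|$). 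Here one must convert $\E\|G\|$ and $\sup_{t\in K^\circ}\|t\|_2$ back into $d^\ast(K)$ via the definition $d^\ast(K)=(\E\|G\|/\sup_{t\in K^\circ}\|t\|_2)^2$: the $\psi_1$-diameter of the body in the relevant direction is $\sup_{t\in K^\circ}\|t\|_2 = \E\|G\|/\sqrt{d^\ast(K)}$, and $H_{s,m}/\sqrt s \leq \beta/d^\ast(K)$ is precisely what is needed to make the subgaussian tail of the spread part concentrate at scale $\eps\E\|G\|$ once $\eps^2 d^\ast(K)\gtrsim 1$. On the peaky part, there are only $s$ coordinates, and since $s\sim d^\ast(K)/\log(em/d^\ast(K))$ while $\|a_{\text{peak}}\|_2\leq \beta\sqrt s/d^\ast(K)$, a separate union bound over the (low-dimensional) span of those coordinates — or a direct chaining estimate — costs $s\log(\cdots)\sim d^\ast(K)$ in the exponent and a deviation of order $\beta\sqrt s/d^\ast(K)\cdot\E\|G\|\leq \eps\E\|G\|$ (after rechecking constants), which is exactly affordable against $\exp(-c\eps^2 d^\ast(K))$ when $\eps\leq 1/2$.

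With the single-point bound $\PP_Z(|\Psi(u)-\E_Z\Psi(u)|>t\E\|G\|)\leq 2\exp(-c\,t^2 d^\ast(K))$ in hand (valid uniformly over $u\in S^{d-1}$, using only \eqref{eq:ass.on.X.in.DM.thm}), I would finish by a union bound:
\[
\PP_Z\Big(\max_{u\in W}|\Psi(u)-\E_Z\Psi(u)|> c_6\eps\E\|G\|\Big)
\leq |W|\cdot 2\exp(-c\,\eps^2 d^\ast(K))
\leq 2\exp\big(d\log(3/\eps)-c\,\eps^2 d^\ast(K)\big),
\]
and the hypothesis $d\leq c_1\frac{\eps^2}{\log(1/\eps)}d^\ast(K)$ with $c_1$ small makes the exponent at most $-c_5\eps^2 d^\ast(K)$. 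The constant $\beta$ is chosen at the very end, small enough that the peaky-part deviation is $\leq \frac{c_6}{2}\eps\E\|G\|$; the constant $c_4$ defining $s$ is chosen so that $s\log(em/d^\ast(K))\sim d^\ast(K)$, ensuring the $s$-dimensional union bound for the peaky part is dominated by $\exp(-c\eps^2 d^\ast(K))$. The one genuinely delicate point is that the constraint on $m$, $m\in[c_2 d^\ast(K),c_3\exp(d^\ast(K))]$, enters through the logarithmic factor $\log(em/d^\ast(K))$: the upper bound $m\leq c_3\exp(d^\ast(K))$ is exactly what keeps $\log(em/d^\ast(K))\leq d^\ast(K)$, so that $s\geq 1$ and the peaky-part exponent stays controlled.
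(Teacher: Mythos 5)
Your overall architecture --- split $a=a(u)=\tfrac{1}{\sqrt m}(\inr{X_i,u})_{i=1}^m$ into a peaky part (top $s$ coordinates) and a spread part, obtain a single-point deviation bound, then union-bound over the net using $d\leq c_1\frac{\eps^2}{\log(1/\eps)}d^\ast(K)$ --- matches the paper's at the top level, and your bookkeeping on $H_{s,m}$, on $\|a\|_\infty\leq H_{s,m}/\sqrt s$, on $d^\ast(K)$, and on the role of the constraint $m\leq c_3\exp(d^\ast(K))$ is correct. But there is a genuine gap at the central step: you claim to apply a ``vector-valued Bernstein / Talagrand concentration inequality'' directly to $\tfrac{1}{\sqrt m}\sum_{i\in I_{u,s}^c}\inr{X_i,u}Z_i$, with a strong term of order $\|a\|_\infty\E\|G\|$. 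This is precisely the step the paper flags as unavailable: the summands $\tfrac{1}{\sqrt m}\inr{X_i,u}\inr{Z_i,t}$ are \emph{not} uniformly bounded, since $\|Z_i\|$ is only subgaussian, and Klein--Rio / Talagrand concentration for empirical processes requires an a priori sup-norm bound on each summand. Moreover, for unbounded subgaussian $Z$, dimension-free gaussian-type concentration of norms can genuinely fail; that is the content of the Huang--Tikhomirov result \cite{huang2021dimension} which the paper cites specifically to explain why Theorem \ref{thm:conc} is not a one-line corollary of subgaussian concentration.

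What is missing is a \emph{second} truncation, this time in $Z$. The paper's proof uses a three-way split $\clubsuit_u+\diamondsuit_u+\heartsuit_u$, where the middle term $\diamondsuit_u$ collects the indices $i\in I_{u,s}^c$ with $\|\tfrac{1}{\sqrt m}\inr{X_i,u}Z_i\|\geq\xi_u\phi(r)$. By Lemma \ref{lem:sub-Gaussian.sparse}(3), on the event $\Omega_r(\mathbb{Z})$ there are at most $r$ such indices, so that $\|\diamondsuit_u\|$ and $\E_Z\|\diamondsuit_u\|$ are both of order $\eps\E\|G\|$; the bound on the conditional mean requires a separate, somewhat delicate argument that your sketch does not anticipate. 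Only \emph{after} removing $J_{u,r}$ is the remainder $\heartsuit_u$ a supremum of a bounded empirical process, with explicit bound $a=\tfrac{H_{s,m}}{\sqrt s}\phi(r)$ (Lemma \ref{lem:talagrand.constants}), at which point Klein--Rio (Theorem \ref{thm:Klein-Rio}) delivers the single-point deviation inequality you need before taking the union bound over $W$. Your peaky-part sketch is also vaguer than the paper's --- the paper runs Cauchy--Schwarz against the high-probability event $\Omega_s(\mathbb{Z})$ from Lemma \ref{lem:sub-Gaussian.sparse}(1)--(2), rather than a chaining/``low-dimensional span'' argument --- but that piece is in the right spirit and is repairable. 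The absence of the $Z$-truncation is the real hole: without it the concentration inequality you invoke for the spread part does not apply.
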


\vspace{0.5em}

\noindent
\underline{On the assumptions in Theorem \ref{thm:conc}:}
Observe that all of its assumptions are satisfied by the choices made in Theorem \ref{thm:DM-main} and for $(X_i)_{i=1}^m \in \Omega(\mathbb{X})$. 
Indeed,  
\begin{enumerate}
\item 
\eqref{eq:ass.on.X.in.DM.thm} follows from \eqref{eq:SW-2-est-DM} and \eqref{eq:SW-2-est-norm}.
\item 
The condition on $m$ holds because $d^\ast(K)\geq C(L)$ for a large constant $C$ that we are free to choose. And since $m$ is proportional to $(d^\ast(K))^\alpha$  for $\alpha\geq 4$ (see \eqref{eq:cond-on-m}), it follows that  \eqref{eq:ass.on.m.in.DM.thm} is satisfied as well.
\end{enumerate}

\vspace{0.5em}
The proof of Theorem \ref{thm:conc} is somewhat involved.
We first present its `road map' and only then turn to the technical details.

\subsection{Highlights of the proof of Theorem \ref{thm:conc}}

	It is worth stressing once again that the proof of Theorem \ref{thm:conc} does not simply follow from gaussian concentration even though $Z$ is subgaussian.
	In fact, it was recently shown that gaussian-like concentration in a subgaussian setup is false in general, see \cite{huang2021dimension}.	
	As we explain in what follows, what saves the day is the particular structure of the  vectors $(\inr{X_i,u})_{i=1}^m$.

The idea is to write for $u \in W$,
\[
\frac{1}{\sqrt{m}}\sum_{i=1}^m \inr{X_i,u}Z_i
=\clubsuit_u+\diamondsuit_u+\heartsuit_u,
\]
where $\|\clubsuit_u\|$ and $\|\diamondsuit_u\|$  are ``small", while $\|\heartsuit_u\|$ concentrates sharply around its mean.
The vectors $\clubsuit_u$ are obtained by truncating the vectors $(\inr{X_i,u})_{i=1}^m$, and $\diamondsuit_u$ is obtained from a truncation of $(\|Z\|_i)_{i=1}^m$.

The wanted decomposition is achieved in three steps. Recall that $K$ is the unit ball of the norm $\|\cdot\|$ and $K^\circ$ its polar body; in particular $\|\cdot\|=\sup_{t\in K^\circ} \langle \cdot, t\rangle$, and for every $I\subset\{1,\dots,m\}$,
\[ \left\| \frac{1}{\sqrt{m}}\sum_{i\in I} \inr{X_i,u}Z_i \right\|
=\sup_{t\in K^\circ} \frac{1}{\sqrt{m}}\sum_{i\in I} \inr{X_i,u}\inr{Z_i,t}. \]

\vspace{0.8em}
\noindent
\underline{\bf Step 1 --- $\clubsuit_u$:} \emph{Removing the large coordinates of $(\inr{X_i,u})_{i=1}^m$.}

Let $I_{u,s} \subset \{1,\dots,m\}$ be the set of indices corresponding to the $s$ largest coordinates of $(|\inr{X_i,u}|)_{i=1}^m$,
and put
\[
\clubsuit_u = \frac{1}{\sqrt{m}} \sum_{i \in I_{u,s}} \inr{X_i,u} Z_i.
\]

\begin{tcolorbox}
	By the assumption on $(X_i)_{i=1}^m$, the term $\|( \frac{1}{\sqrt m} \inr{X_i,u})_{i \in I_{u,s}}\|_2\leq H_{s,m}$ is small.
	We will show that as a result, $\|\clubsuit_u\|$ must be  small as well.
\end{tcolorbox}

\vspace{0.5em}
Following Step 1, one is  left with $\frac{1}{\sqrt{m}} \sum_{i \in I_{u,s}^c} \inr{X_i,u} Z_i$ to deal with. Set
\[
\xi_u= \frac{1}{\sqrt{m}} \max_{i \in I_{u,s}^c} |\inr{X_i,u}|,\]
and note that
\[\xi_u\leq \frac{H_{s,m}}{\sqrt{s}}  . \]
Thus, a bound on $H_{s,m}$ leads to a uniform bound on $\xi_u$, a fact that will prove to be useful in Step 2 and Step 3.

\vspace{0.8em}
\noindent
\underline{\bf Step 2 --- $\diamondsuit_u$:} \emph{Truncation of $\|Z_i\|$.}

Fix $r$ to be named in what follows.
For a well-chosen function $\phi(r)$, denote by $J_{u,r} \subset \{1,\dots,m\}$  the (random!) set of indices $i$ such that $\| \frac{1}{\sqrt m} \inr{X_i,u} Z_i\| \geq \xi_u \phi(r)$.
We will show that with high probability with respect to $Z$, the sets $I_{u,s}^c\cap J_{u,r}$ consist of at most $r$ elements.

Put
\[
\diamondsuit_u = \frac{1}{\sqrt{m}} \sum_{ i \in I_{u,s}^c \cap J_{u,r}} \inr{X_i,u}Z_i.
\]
The choice of $\diamondsuit_u$ is useful in two important aspects:
\begin{tcolorbox}
\begin{description}
\item{$(1)$} {\it Control in $\ell_\infty$:} For every  $i \in I_{u,s}^c \cap J_{u,r}$,  we have $\frac{1}{\sqrt{m}}|\inr{X_i,u}| \leq \xi_u$.
\item{$(2)$} {\it ``Short support":} since $|I_{u,s}^c \cap J_{u,r}|  \leq r$,  $\diamondsuit_u$ is the sum of a few terms of the form $\frac{1}{\sqrt{m}}\inr{X_i,u}Z_i$.
\end{description}
\end{tcolorbox}
It turns out that $(1)$ and $(2)$ are enough to ensure that $\|\diamondsuit_u\|$ is sufficiently small.

\vspace{0.8em}
\noindent
\underline{\bf Step 3 --- $\heartsuit_u$:} \emph{Concentration.}

Finally, one has to show that the norm of
\[
\heartsuit_u = \frac{1}{\sqrt{m}} \sum_{ i \in I_{u,s}^c \cap J_{u,r}^c} \inr{X_i,u}Z_i
\]
concentrates sharply around its mean.
Note that here there is sufficient control on
\[
\max_{i \in I_{u,s}^c \cap J_{u,r}^c} \|\inr{X_i,u} Z_i\| :
\]
thanks to the choice of $I_{u,s}$ and $J_{u,r}$, both $|\inr{X_i,u}|$ and $\|Z_i\|$ are well bounded for $i\in I_{u,s}^c \cap J_{u,r}^c$.

\begin{tcolorbox}
That will be enough to show that $\|\heartsuit_u\|$ (as a function of $(Z_i)_{i=1}^m$) concentrates around $\E_Z\|\heartsuit_u\|$. Moreover, thanks to the estimates in Step 1 and Step 2, $\E_Z \|\heartsuit_u\|$ is close to $\Lambda=\E_Z\Psi(u)$.
\end{tcolorbox}

\subsection{Preliminary structural estimates for Theorem \ref{thm:conc}}

The following lemma contains well-known facts on subgaussian processes.

\begin{Lemma}
\label{lem:sub-Gaussian.sparse}
	There are absolute constants $C_0,C_0',C_1,C_1'$ such that for every $1\leq r\leq m$ the following hold.
	Let $Z$ be an isotropic, $L$-subgaussian random variable in $\R^n$, let $Z_1,\dots,Z_m$ be independent copies of $Z$ and set ${\cal R}(K^\circ) = \sup_{t\in K^\circ} \|t\|_2$.
	Then:
\begin{enumerate}
\item[(1)]
 With probability at least $1-2\exp(-C_0' r\log(em/r))$,
\begin{equation} \label{eq:conc-event-2}
\sup_{t \in K^\circ} \max_{|J|=r} \left(\sum_{j \in J} \inr{Z_j,t}^2 \right)^{1/2} \leq C_0 L \left(\E \|G\| + {\cal R}(K^\circ) \sqrt{r \log(em/r)} \right).
\end{equation}
\item[(2)]
 We have that
\begin{equation} \label{eq:conc-mean-2}
\left( \E \sup_{t \in K^\circ} \max_{|J|=r} \sum_{j \in J} \inr{Z_j,t}^2 \right)^{1/2} \leq C_0 L\left(\E \|G\| + {\cal R}(K^\circ) \sqrt{r \log(em/r)} \right).
\end{equation}
\item[(3)]
With probability at least $1-2\exp(-C_1'r\log (em/r))$,
\begin{equation} \label{eq:conc-event-norm-2}
\left| \left\{ i\in\{1,\dots,m\} : \|Z_i\| \geq C_1 L \left(\E\|G\| + {\cal R}(K^\circ) \sqrt{\log\left( em/r\right)  } \right) \right\} \right| \leq r.
\end{equation}
\end{enumerate}
\end{Lemma}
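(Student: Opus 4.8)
\textbf{Proof proposal for Lemma \ref{lem:sub-Gaussian.sparse}.}

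The three assertions are all standard consequences of the subgaussian machinery, and the plan is to treat (2) first via a chaining/majorizing-measure argument, then deduce (1) from (2) by a concentration bound, and finally treat (3) by a union bound over coordinate subsets. For (2), fix $r$ and let $V_r$ denote the set of all vectors of the form $t\mathbbm{1}_J \in \R^m$-indexed configurations; more concretely, the quantity $\sup_{t\in K^\circ}\max_{|J|=r}\sum_{j\in J}\inr{Z_j,t}^2$ is the squared sup-norm of the empirical process indexed by the set $\mathcal{T}_r = \{(\mathbbm{1}_J(i)\, t)_{i=1}^m : t\in K^\circ,\ |J|=r\}\subset \R^{m\times n}$, acting on $(Z_1,\dots,Z_m)$. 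I would bound its expectation by splitting: the ``$t$ part'' contributes $\E\sup_{t\in K^\circ}\sum_{i=1}^m \inr{Z_i,t}^2$-type terms controlled via $\E\|G\|$ (since $Z$ is $L$-subgaussian, Talagrand's majorizing measure theorem gives $\E\sup_{t\in K^\circ}|\sum_i \eps_i \inr{Z_i,t}| \lesssim L\sqrt m\, \E\|G\|$ after symmetrization, and a standard $\sup$ of a sum of squares estimate), while the ``choosing $J$ of size $r$'' part contributes the $\sqrt{r\log(em/r)}$ factor via the entropy estimate $\log\binom{m}{r}\lesssim r\log(em/r)$ combined with a subgaussian tail for each fixed $\inr{Z_j,t}$. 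Carefully this is the contraction/Dudley bound: $(\E\sup_{t,J}\sum_{j\in J}\inr{Z_j,t}^2)^{1/2}\lesssim L(\E\|G\| + \mathcal R(K^\circ)\sqrt{r\log(em/r)})$, where $\mathcal R(K^\circ)=\sup_{t\in K^\circ}\|t\|_2$ enters because a single coordinate $\inr{Z_j,t}$ has $\psi_2$-norm $\lesssim L\|t\|_2\leq L\mathcal R(K^\circ)$.

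For (1), having (2) in hand, I would apply a high-probability deviation bound for the supremum of a (nonnegative, subgaussian-increment) process: either Talagrand's concentration inequality for suprema of empirical processes, or more elementarily the bounded-differences/Gaussian-concentration-type estimate applied to $t\mapsto (\sum_{j\in J}\inr{Z_j,t}^2)^{1/2}$, whose Lipschitz constant in the $Z$'s is controlled by $\mathcal R(K^\circ)$ on each block of $r$ coordinates. The correct deviation scale is exactly $r\log(em/r)$: one pays $\exp(-c r\log(em/r))$ to union-bound over the $\binom{m}{r}$ choices of $J$ (after discretizing $K^\circ$ at the appropriate scale, whose entropy is already absorbed into $\E\|G\|$ via the majorizing measure bound). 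Choosing the numerical constants so that the deviation term is dominated by the mean gives \eqref{eq:conc-event-2} with probability at least $1-2\exp(-C_0' r\log(em/r))$.

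For (3), fix $\lambda = C_1 L(\E\|G\| + \mathcal R(K^\circ)\sqrt{\log(em/r)})$ and observe that $\{i : \|Z_i\|\geq\lambda\}$ has size exceeding $r$ only if there is some $J$ with $|J|=r$ such that $\|Z_j\|\geq\lambda$ for all $j\in J$; by independence this event has probability at most $\binom{m}{r}\max_J \prod_{j\in J}\PP(\|Z\|\geq\lambda) \leq \binom{m}{r} p^r$ where $p=\PP(\|Z\|\geq\lambda)$. Since $\|Z\|$ is a subgaussian random variable with $\E\|Z\|\lesssim L\,\E\|G\|$ (gaussian domination again) and $\psi_2$-parameter $\lesssim L\mathcal R(K^\circ)$, a standard tail bound gives $p \leq \exp(-c(em/r))$ roughly — more precisely $p\leq (r/em)^{C}$ for $C$ as large as we like by taking $C_1$ large — so $\binom{m}{r}p^r \leq (em/r)^r (r/em)^{Cr} \leq \exp(-C_1' r\log(em/r))$. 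This yields \eqref{eq:conc-event-norm-2}.

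The main obstacle is the expectation bound (2): extracting the clean split into the ``$\E\|G\|$ part'' (uniformity over $K^\circ$, where one must invoke the majorizing measure / gaussian-domination comparison since $Z$ is only subgaussian and not gaussian) and the ``$\sqrt{r\log(em/r)}$ part'' (uniformity over sparse supports $J$) requires a careful two-parameter chaining argument; once (2) is established with the right dependence on $\mathcal R(K^\circ)$ and $\E\|G\|$, statements (1) and (3) are routine concentration and union-bound arguments. I would organize (2) by first proving the single-block estimate $(\E\sup_t\sum_{j\in J_0}\inr{Z_j,t}^2)^{1/2}\lesssim L(\E\|G\|+\mathcal R(K^\circ)\sqrt r)$ for a fixed $J_0$ of size $r$ (this is essentially the subgaussian column/operator estimate), and then paying the $\log\binom{m}{r}$ entropy cost in the exponent when I pass to the max over $J$ — noting that for the \emph{expectation} this costs only a $\sqrt{\log(em/r)}$ factor on the second term, which is exactly what is claimed.
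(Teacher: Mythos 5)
Your plan for part (3) is sound and, if anything, more direct than the paper's: you union-bound over size-$r$ subsets $J$ with $\|Z_j\|$ large for all $j\in J$, paying $\binom{m}{r}p^{r}$ with $p=\PP(\|Z\|\geq\lambda)$, and choose $C_1$ large enough that $p\leq(r/em)^{C}$. The paper instead introduces selector variables $\delta_i=\IND_{\{\|Z_i\|\geq\lambda\}}$ and invokes Bennett's inequality; both routes give $\exp(-cr\log(em/r))$. Your identification of $\mathcal{R}(K^\circ)$ as the relevant $\psi_2$-scale and your single-block estimate $(\E\sup_{t}\sum_{j\in J_0}\inr{Z_j,t}^2)^{1/2}\leq cL(\E\|G\|+\mathcal{R}(K^\circ)\sqrt r)$ for a fixed $J_0$ are also correct.

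The genuine gap is the passage from (2) to (1). You propose to treat $(Z_1,\dots,Z_m)\mapsto\sup_{t,J}(\sum_{j\in J}\inr{Z_j,t}^2)^{1/2}$ as a Lipschitz function of the $Z_i$'s with Lipschitz constant $\mathcal{R}(K^\circ)$ and invoke a gaussian-concentration-type or Talagrand-type bound around the mean. For a gaussian $Z$ this works; for a merely $L$-subgaussian product vector it does not. The paper itself recalls (citing Huang--Tikhomirov) that dimension-free concentration for convex Lipschitz functions of subgaussian product vectors is \emph{false} in general, and the bounded-empirical-process version of Talagrand's inequality does not apply since the functions $y\mapsto\inr{y,t}$, $t\in K^\circ$, are unbounded. (Klein--Rio is used in the paper, but only later for the truncated process $\heartsuit_u$, where boundedness is manufactured by the truncation.) Your back-up remark that one may ``discretize $K^\circ$ at the appropriate scale, whose entropy is absorbed into $\E\|G\|$,'' also does not survive scrutiny: $K^\circ$ is an arbitrary convex body and its covering numbers at a fixed scale are not controlled by $\E\|G\|$.

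The paper's actual argument sidesteps both problems by re-parametrizing before discretizing. Write
\[
\sup_{t\in K^\circ}\max_{|J|=r}\Bigl(\sum_{j\in J}\inr{Z_j,t}^2\Bigr)^{1/2}=\sup_{x\in\Sigma_r}\Bigl\|\sum_{j=1}^m x_j Z_j\Bigr\|,
\]
where $\Sigma_r\subset S^{m-1}$ is the set of $r$-sparse unit vectors, and take a $1/10$-net $\Sigma_r'$ of $\Sigma_r$, of cardinality $\exp(c_0 r\log(em/r))$. For each fixed $x\in\Sigma_r'$, the vector $\sum_j x_j Z_j$ is isotropic and $cL$-subgaussian, so gaussian domination combined with the strong--weak inequality $(\E\|G\|^p)^{1/p}\leq c(\E\|G\|+\sqrt p\,\mathcal{R}(K^\circ))$ yields a $p$-th moment bound for $\|\sum_j x_j Z_j\|$; Chebyshev at $p\sim r\log(em/r)$ and a union bound over $\Sigma_r'$ then give (1) directly, and (2) follows from the same moment bound by tail integration. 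This is the reverse of your proposed order, and the reversal is forced: without gaussianity one has moment growth for each fixed sparse direction but no concentration inequality for the supremum.
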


The proof of Lemma \ref{lem:sub-Gaussian.sparse} is standard, and it is outlined for the sake of completeness.

\begin{proof}[Sketch of the proof of Lemma \ref{lem:sub-Gaussian.sparse}]
Let $\Sigma_r\subset S^{m-1}$  be the set of $r$-sparse unit vectors, that is,  $\Sigma_r=\{x\in S^{m-1} : |\{ i : x_i \neq 0 \}|\leq r\}$.
Set $\Sigma_r'\subset\Sigma_r$ to be a minimal $1/10$-cover of $\Sigma_r$ with respect to the Euclidean norm.
Because $\Sigma_r$ is the union of $\binom{m}{r}$ Euclidean spheres, a standard volumetric estimate shows that $|\Sigma_r'|\leq \exp(c_0r\log\frac{em}{r})$ for an absolute constant $c_0$ (see, e.g., \cite[Corollary 4.1.15]{artstein2015asymptotic}).
A convexity argument (just as in the proof of Lemma \ref{lemma:reduction}) implies that
\begin{align*}
\sup_{t \in K^\circ} \max_{|J|=r} \left(\sum_{j \in J} \inr{Z_j,t}^2 \right)^{1/2}
&=\sup_{x\in \Sigma_r} \left\| \sum_{j=1}^m x_j Z_j\right\|
\leq 2\max_{x\in \Sigma_r'} \left\| \sum_{j=1}^m x_j Z_j\right\|.
\end{align*}
It is straightforward to verify that each random vector $ \sum_{j=1}^m x_j Z_j$ is $c L\|x\|_2 $-subgaussian and isotropic; thus, by gaussian domination, there is an absolute constant $c_1$ such that for every $x\in \Sigma_r'$ and $p \geq 1$, 
\begin{align}
\left(\E\left \| \sum_{j=1}^m x_j Z_j \right\|^p\right)^{1/p} &\leq c_1 L (\E\|G\|^p)^{1/p}.
\end{align}
Moreover, by the strong-weak inequality for the gaussian measure (which follows directly from the gaussian concentration inequality, see, e.g., \cite[Lemma 3.1]{ledoux1991probability}),
\[ (\E\|G\|^p)^{1/p} \leq c_2 (\E \|G\| + \sqrt{p} {\cal R}(K^\circ) ).\]
When applied to $p=2c_0r\log \frac{em}{r}$, it follows from Chebyshev's inequality that for every $x\in \Sigma_r'$, with probability at least $1-\exp(-2c_0 r\log\frac{em}{r})$,
\[ \left\| \sum_{j=1}^m x_j Z_j \right\|
\leq c_3 L \left(\E \|G\| + {\cal R}(K^\circ) \sqrt{r \log(em/r)} \right),\]
and (1) is evident by the union bound.

The proof of (2) follows a similar path to (1), combined with tail-integration.

As for (3), by an identical argument to the one used previously, there are absolute constants $c_4$ and $c_5\geq 2$, such that
\begin{align*}
&\PP \left( \|Z\| \geq c_4 L \left(\E \|G\| + \sqrt{\log\left(em/r\right)} {\cal R}(K^\circ) \right) \right)  \\
&\leq \exp\left(-c_5\log\left(em/r\right)\right)
\leq \left( \frac{r}{em} \right)^2.
\end{align*}
Consider $m$ independent selectors (that is, $\{0,1\}$-valued random variables), defined by
\[
\delta_i=1 \ \ {\rm if} \ \ \|Z_i\| \geq c_4 L\left(\E \|G\| + \sqrt{\log\left(em/r\right)} {\cal R}(K^\circ) \right).
\]
Then $\PP(\delta_i=1) \leq (r/em)^2$, and by Bennett's inequality (see, e.g., \cite[Theorem 2.9]{boucheron2013concentration}), with probability at least $1-2\exp(-c_6r\log (em/r))$, we have that $|\{i : \delta_i=1\}| \leq r$, as claimed.
\end{proof}

\vspace{0.5em}
\begin{tcolorbox}
	Let $Z$ be the random variable as in Theorem \ref{thm:DM-main}.
	For $1\leq r\leq m$, denote by $\Omega_r(\mathbb{Z})$ the set of all $ (Z_i)_{i=1}^m$ such that both \eqref{eq:conc-event-2} and \eqref{eq:conc-event-norm-2}  hold.
\end{tcolorbox}

\subsection{Proof of Theorem \ref{thm:conc}---part 1}

Throughout this section, denote by $C_0,C_1$ the absolute constants from Lemma \ref{lem:sub-Gaussian.sparse}.
The constants $c,c_0,c_1,c_2$ etc.\ depend on $L$, $C_0$ and $C_1$.
For $1\leq r\leq m$ set
\begin{align} \label{eq:def.phi}
\phi(r) = C_1 L \left(\E\|G\| + {\cal R}(K^\circ) \sqrt{\log( em/r)}\right),
\end{align}
i.e.,  $\phi$ is the function appearing in \eqref{eq:conc-event-norm-2}.


\vspace{0.5em}
Rather than specifying the required values of $r,s,m,d$ and $H_{s,m}$, we shall collect conditions on those values, and show in Section \ref{sec:putting.concentration.proof.together} that the conditions  are satisfied under the assumptions of Theorem \ref{thm:conc}.

Finally, fix a realization $(X_i)_{i=1}^m$ for which $\sup_{u\in B_2^d}\frac{1}{m}\sum_{i=1}^m\inr{X_i,u}^2\leq 2$.

\vspace{0.5em}
\noindent \underline{\bf Step 1 --- $\clubsuit_u$:}
Let $s$  be specified in what follows, set $(Z_i)_{i=1}^m\in \Omega_s(\mathbb{Z})$ and recall that  $I_{u,s}$ is the set of indices corresponding to the $s$ largest coordinates of $(|\inr{X_i,u}|)_{i=1}^m$.
By the Cauchy-Schwartz inequality,
\begin{align*}
\|\clubsuit_u\|
= & \sup_{t \in K^\circ}  \frac{1}{\sqrt{m}} \sum_{i \in I_{u,s}} \inr{X_i,u} \inr{Z_i,t}
\\
\leq & \left(\frac{1}{m}\sum_{i \in I_{u,s}} \inr{X_i,u}^2 \right)^{1/2} \cdot \sup_{t \in K^\circ} \left( \sum_{i \in I_{u,s}} \inr{Z_i,t}^2 \right)^{1/2}.
\end{align*}
Since $|I_{u,s}|=s$, it follows from the choice of $s$ and the definitions of $H_{s,m}$ and $\Omega_s(\mathbb Z)$ that
\[
\|\clubsuit_u\| \leq H_{s,m}
\cdot C_0L\left(\E \|G\| + {\cal R}(K^\circ) \sqrt{s\log (em/s)} \right).
\]
Moreover, since
\[d^*(K) = \left( \frac{\E \|G\|}{{\cal R}(K^\circ)} \right)^2,\]
it is evident that for suitable constants $c_1$ and $c_1'$ the following holds: 
\begin{tcolorbox}
If
\begin{equation} \label{eq:conc-cond-1}
H_{s,m} \leq c_1 \eps
\qquad {\rm and} \qquad
s \log (em/s) \leq c_1' d^*(K),
\end{equation}
then for $(Z_i)_{i=1}^m\in \Omega_s(\mathbb{Z})$,
\[
\sup_{u \in B_2^d} \|\clubsuit_u\| \leq \eps \E \|G\|.
\]
\end{tcolorbox}

Using \eqref{eq:conc-mean-2} and an identical argument, the conditions in \eqref{eq:conc-cond-1} also imply that
\begin{equation} \label{eq:conc-sup-mean-large}
\sup_{u\in B_2^d} \E_Z \|\clubsuit_u\|
\leq \eps \E \|G\|.
\end{equation}

\vspace{0.5em}
\noindent \underline{\bf Step 2 --- $\diamondsuit_u$:}
Let $r$  be named in what follows and set $(Z_i)_{i=1}^m \in \Omega_r(\mathbb{Z})$.
Recall that
\[\xi_u= \frac{1}{\sqrt m}\max_{i \in I_{u,s}^c} |\inr{X_i,u}|\]
and thus $\xi_u\leq  H_{s,m}/ \sqrt{s}$.
Consider the function $\phi$  defined in \eqref{eq:def.phi} and set
\[J_{u,r} = \left\{ j\in\{1,\dots,m\} : \left\| \tfrac{1}{\sqrt{m}}\inr{X_j,u}  Z_j\right\| \geq \xi_u \phi(r) \right\}.\]
Clearly $|\frac{1}{\sqrt{m}}\inr{X_i,u}|\leq \xi_u$ for every $i\in I_{u,s}^c$; hence,
\begin{align}
\label{eq:coordinates.intersection}
I_{u,s}^c \cap J_{u,r}
\subset \left\{ j\in\{1,\dots,m\} : \|Z_j\| \geq \phi(r) \right\}.
\end{align}
Moreover, by the definition of $\Omega_r(\mathbb{Z})$,   $|\{ j: \|Z_j\| \geq \phi(r) \}|\leq r$.
Therefore,
\begin{align*}
\| \diamondsuit_u \|
& = \sup_{t \in K^{\circ}} \frac{1}{\sqrt{m}}\sum_{i \in I_{u,s}^c \cap J_{u,r}}  \inr{X_i,u} \inr{Z_i,t} \\
& \leq  \xi_u \cdot \sup_{t \in K^\circ} \max_{|J| = r} \sum_{j \in J} |\inr{Z_j,t}|
\\
& \leq \frac{H_{s,m}}{\sqrt s} \cdot \sqrt{r} \sup_{t \in K^\circ} \max_{|J| = r}  \left(\sum_{j \in J} \inr{Z_j,t}^2 \right)^{1/2} ,
\end{align*}
where the last inequality is evident by comparing the $\ell_1^r$ and $\ell_2^r$ norms.
Since $(Z_i)_{i=1}^m\in \Omega_r(\mathbb Z)$,
\[
\| \diamondsuit_u \|
\leq H_{s,m} \cdot \sqrt \frac{r}{s} C_0 L \left(\E\|G\| + {\cal R}(K^\circ) \sqrt{r \log\left( em/r\right)}\right),
\]
and in particular, for suitable constants $c_2$ and $c_2'$, we have the following:
\begin{tcolorbox}
If
\begin{equation} \label{eq:conc-cond-2}
H_{s,m} \sqrt{\frac{r}{s}}  \leq c_2  \eps \ \ {\rm and} \ \ r \log\left( em/r \right) \leq c_2'  d^*(K),
\end{equation}
then
\[
\sup_{u\in B_2^d} \| \diamondsuit_u \| \leq \eps \E \|G\|.
\]
\end{tcolorbox}

Next, let us estimate $\E_Z \| \diamondsuit_u \|$.

\begin{Lemma} 
If \eqref{eq:conc-cond-2} holds, then $\sup_{u\in B_2^d}\E_Z \|\diamondsuit_u\| \leq \eps \E\|G\|$.
\end{Lemma}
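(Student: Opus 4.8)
The plan is to bound $\E_Z\|\diamondsuit_u\|$ by conditioning on the random set $I_{u,s}^c\cap J_{u,r}$ and exploiting the two features isolated in Step 2: the $\ell_\infty$-control $\frac1{\sqrt m}|\inr{X_i,u}|\le\xi_u$ on this set, and the fact that it has at most $r$ elements (the latter holding on the event $\Omega_r(\mathbb Z)$, whose complement carries exponentially small probability). First I would write, exactly as in the almost-sure bound,
\[
\E_Z\|\diamondsuit_u\|
=\E_Z\sup_{t\in K^\circ}\frac1{\sqrt m}\sum_{i\in I_{u,s}^c\cap J_{u,r}}\inr{X_i,u}\inr{Z_i,t}
\le \xi_u\,\E_Z\Bigl(\sup_{t\in K^\circ}\max_{|J|=r}\sum_{j\in J}|\inr{Z_j,t}|\Bigr),
\]
using $|I_{u,s}^c\cap J_{u,r}|\le r$ together with the pointwise bound on the coefficients; here the inner supremum no longer depends on $u$, so the $u$-dependence has been reduced to the scalar factor $\xi_u\le H_{s,m}/\sqrt s$. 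Then comparing $\ell_1^r$ with $\ell_2^r$,
\[
\E_Z\|\diamondsuit_u\|
\le \frac{H_{s,m}}{\sqrt s}\,\sqrt r\;\Bigl(\E_Z\sup_{t\in K^\circ}\max_{|J|=r}\sum_{j\in J}\inr{Z_j,t}^2\Bigr)^{1/2},
\]
and now \eqref{eq:conc-mean-2} gives the expectation factor a bound of $C_0L(\E\|G\|+\mathcal R(K^\circ)\sqrt{r\log(em/r)})$. Substituting $d^*(K)=(\E\|G\|/\mathcal R(K^\circ))^2$ and invoking both inequalities of \eqref{eq:conc-cond-2} — $H_{s,m}\sqrt{r/s}\le c_2\eps$ and $r\log(em/r)\le c_2'd^*(K)$ — exactly as in the almost-sure estimate, yields $\sup_{u\in B_2^d}\E_Z\|\diamondsuit_u\|\le\eps\E\|G\|$ after adjusting $c_2,c_2'$.

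The one genuine subtlety is that the set $I_{u,s}^c\cap J_{u,r}$ has at most $r$ elements only on $\Omega_r(\mathbb Z)$, not almost surely, so the displayed chain needs the indicator $\IND_{\Omega_r(\mathbb Z)}$ inserted; the contribution of the complement must be shown to be negligible. I would handle this by bounding $\E_Z\|\diamondsuit_u\|\IND_{\Omega_r(\mathbb Z)^c}$ crudely: $\|\diamondsuit_u\|\le\Psi(u)+\|\clubsuit_u\|+\|\heartsuit_u\|$, or more simply $\|\diamondsuit_u\|\le \frac1{\sqrt m}\sum_{i=1}^m|\inr{X_i,u}|\,\|Z_i\|\le \sqrt2\,(\frac1m\sum_i\|Z_i\|^2)^{1/2}$ by Cauchy–Schwarz and $\sup_u\frac1m\sum_i\inr{X_i,u}^2\le2$; since $\E_Z\|Z\|^2\lesssim L^2(\E\|G\|)^2$ (say via $\|Z\|\le\sup_{t\in K^\circ}|\inr{Z,t}|$ and gaussian domination, or a uniform fourth-moment bound), Cauchy–Schwarz in $Z$ gives $\E_Z\|\diamondsuit_u\|\IND_{\Omega_r(\mathbb Z)^c}\le \bigl(\E_Z\|\diamondsuit_u\|^2\bigr)^{1/2}\PP_Z(\Omega_r(\mathbb Z)^c)^{1/2}\lesssim L\E\|G\|\exp(-c\,r\log(em/r))$, which is far smaller than $\eps\E\|G\|$ once $r\log(em/r)\gtrsim\log(1/\eps)$, a condition absorbed into \eqref{eq:conc-cond-2}. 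Adding the two pieces gives the claim.

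I expect the main obstacle to be purely bookkeeping: making sure the crude bound on the $\Omega_r(\mathbb Z)^c$-part is genuinely dominated, which forces a mild lower bound on $r\log(em/r)$ — but this is consistent with (indeed weaker than) the standing assumption $d^*(K)\ge C_0\log n$ and the choices of $r,s$ made later, so it costs nothing. No new ideas beyond those in the almost-sure Step~2 estimate are needed; one is simply moving the supremum over $t$ and the max over $|J|=r$ inside the expectation and replacing the high-probability bound \eqref{eq:conc-event-2} by the expectation bound \eqref{eq:conc-mean-2}.
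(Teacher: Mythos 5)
Your decomposition into $\Omega_r(\mathbb Z)$ and its complement is a genuinely different route from the paper: the paper never conditions on $\Omega_r(\mathbb Z)$ here, but instead sets $Q=|\{i\in I_{u,s}^c:\|Z_i\|\ge\phi(r)\}|$, bounds $\|\diamondsuit_u\|\le\frac{H_{s,m}}{\sqrt s}\sqrt{Q}\,S(Q)$ with $S(q)=(\sup_{t\in K^\circ}\max_{|J|\le q}\sum_{j\in J}\inr{Z_j,t}^2)^{1/2}$, and then estimates $\E_Z\sqrt{Q}\,S(Q)$ by a dyadic (layer-cake) decomposition over the level sets $\{2^j<Q\le 2^{j+1}\}$, applying Cauchy--Schwarz on each level together with $\PP(Q\ge 2^j)\le 2\exp(-c2^j)$ for $2^j\ge 2r$. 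That tail decays doubly exponentially in $j$, which swamps the polynomial growth of $\sqrt{2^{j+1}}\,(\E S^2(2^{j+1}))^{1/2}$, so the sum is dominated by its first term and no lower bound on $r$ is needed.

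Your lump-everything-into-$\Omega_r^c$ shortcut runs into two problems. First, a computational slip: Cauchy--Schwarz gives
\[
\|\diamondsuit_u\|\le \frac{1}{\sqrt m}\sum_{i=1}^m|\inr{X_i,u}|\,\|Z_i\|\le \left(\frac1m\sum_i\inr{X_i,u}^2\right)^{1/2}\left(\sum_{i=1}^m\|Z_i\|^2\right)^{1/2}\le\sqrt2\left(\sum_{i=1}^m\|Z_i\|^2\right)^{1/2},
\]
not $\sqrt 2(\frac1m\sum_i\|Z_i\|^2)^{1/2}$; you dropped a factor of $\sqrt m$. Second, and more importantly, once that factor is restored the contribution of the bad event is $\lesssim \sqrt m\,L\,\E\|G\|\exp(-c\,r\log(em/r)/2)$, so the condition you actually need is $c\,r\log(em/r)\gtrsim\log m+\log(L/\eps)$, not merely $r\log(em/r)\gtrsim\log(1/\eps)$. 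That stronger requirement is \emph{not} absorbed into \eqref{eq:conc-cond-2}, which only supplies an \emph{upper} bound $r\log(em/r)\le c_2'\,d^*(K)$. In the range of parameters allowed by Theorem \ref{thm:conc}, $m$ can be as large as $\exp(d^*(K))$ while $r\log(em/r)$ is only guaranteed to be $\gtrsim\eps^2 d^*(K)$, so the needed lower bound can fail for small $\eps$. One could try to repair the argument with extra constraints tying $r$ to $\log m$, but then the lemma no longer holds under the hypotheses as stated, which is precisely what the paper's dyadic decomposition is designed to avoid.
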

\begin{proof}
	Fix $u \in B_2^d$ and set $Q=| \{i\in I_{u,s}^c : \|Z_i\|\geq \phi(r) \}|$. For $q\geq 1$, let
	\[
	S(q)= \left( \sup_{t \in K^\circ} \max_{|J| \leq  q} \sum_{j \in J} \inr{Z_j,t}^2 \right)^{1/2},
\]
	with the convention that $\max \emptyset =0$.
	Therefore, by \eqref{eq:coordinates.intersection},
	\begin{align*}
	\|\diamondsuit_u\|
	&\leq \max_{i \in I_{u,s}^c} \left| \frac{1}{\sqrt m} \inr{X_i,u}\right| \cdot \sup_{t \in K^\circ} \sum_{j \in I_{u,s}^c\cap J_{u,r}} |\inr{Z_j,t}|  \\
	&\leq \frac{ H_{s,m} }{\sqrt{s} } \cdot \sqrt{Q } S(Q)  .
\end{align*}
	Since $H_{s,m}$ is independent of $(Z_i)_{i=1}^m$, all that is left is to show that $\E_Z \sqrt{Q } S(Q)\leq c\sqrt{r} \E \|G\|$ for a suitable constant $c$.
	And indeed,  observe that
	\begin{align*}
	\sqrt{Q} S(Q)
	&\leq  \IND_{[0,2r]}(Q)   \sqrt{ 2r } S(2r)  + \sum_{r<2^s<m} \IND_{(2^s,2^{s+1}]}(Q) \sqrt{ 2^{s+1} } S(2^{s+1})  \\
	&= (\ast) +(\ast\ast).
	\end{align*}
	By  \eqref{eq:conc-mean-2}, for  $1\leq q\leq m$,
	\[ \left( \E_Z S^2(q) \right)^{1/2}
	\leq C_0 L \left(\E\|G\| + {\cal R}(K^\circ) \sqrt{ q \log\left( em/q\right)}\right),  \]
	and  for  $r$ as in \eqref{eq:conc-cond-2} and the choice of $d^\ast(K)$,  $\E_Z (\ast)\leq c_1 \sqrt{r} \E \|G\|$.

	To estimate $\E_Z (\ast\ast)$, apply the Cauchy-Schwartz inequality to each term \linebreak $\E_Z \IND_{(2^s,2^{s+1}]}(Q)  S(2^{s+1})$:
	\[ \E_Z (\ast\ast)
	\leq \sum_{r<2^s< m}  \sqrt{ \PP( Q\geq 2^s) } \cdot  \sqrt{ 2^{s+1}}  C_0 L \left(\E\|G\| + {\cal R}(K^\circ) \sqrt{ 2^{s+1} \log\left( em/ 2^{s+1}\right)}\right) . \] 	
	Finally, just as in the proof of the third part of Lemma \ref{lem:sub-Gaussian.sparse}, $\PP( Q\geq 2^s)\leq 2\exp(-c_2 2^s)$ for $2^s\geq 2r$; hence,  by comparing the sum to a geometric progression,  $\E_Z (\ast\ast)\leq  c_3 \sqrt{r} \E \|G\|$, as claimed.
\end{proof}

\vspace{0.5em}
\noindent \underline{\bf Step 3 --- $\heartsuit_u$:}
By definition of $J_{u,r}$,
\begin{align*}
\heartsuit_u
&= \frac{1}{\sqrt{m}}\sum_{i \in I_{u,s}^c \cap J^c_{u,r}}  \inr{X_i,u} Z_i\\
&= \frac{1}{\sqrt{m}} \sum_{i \in I_{u,s}^c }\inr{X_i,u} \IND_{ \left\{ \left\| \frac{1}{\sqrt m} \inr{X_i,u} Z_i \right\| \leq \xi_u \phi(r) \right\} } Z_i.
\end{align*}
In particular, if we set
\begin{align*}
\mathcal{F}_u
&=\left\{ f_t(\cdot)=\inr{t,\cdot} \IND_{\{ \| \cdot \| \leq \xi_u \phi(r)\}} : t \in K^\circ \right\} \text{ and}\\
Y_i
&= \frac{1}{\sqrt{m}} \inr{X_i,u} Z_i \in \R^n,
\end{align*}
then
\begin{equation} \label{eq:representation}
\|\heartsuit_u\|
=\sup_{t \in K^\circ} \sum_{i \in I_{u,s}^c} f_t(Y_i).
\end{equation}

\begin{Remark}
Note that the sets $I_{u,s}$ depend on $(X_i)_{i=1}^m$ but not on $(Z_i)_{i=1}^m$.
\end{Remark}

Let us show that conditionally on $(X_i)_{i=1}^m$, each random variable $\|\heartsuit_u\|$ concentrates around its mean.
Equation  \eqref{eq:representation} means that $\|\heartsuit_u\|$ is a supremum of an empirical process and one may invoke the following version of Talagrand's concentration inequality for bounded empirical processes due to Klein and Rio \cite{klein2005concentration}.

\begin{Theorem} \label{thm:Klein-Rio}
Let $Y_1,\dots,Y_k$ be independent, set $\mathcal{F}$ to be a class of functions into $[-a,a]$ such that $\E f(Y_i) =0$ for every $i\in\{1,\dots,k\}$ and $f\in \mathcal{F}$.
Let $\sigma^2 =\sup_{f \in \mathcal{F}} \sum_{i=1}^k \E f^2(Y_i)$ and consider
\[
U=\sup_{f \in \mathcal{F}} \sum_{i=1}^k f(Y_i).
\]
Then for $x>0$,
\begin{align}
\label{eq:Klein-Rio}
\PP \left(|U-\E U| \leq x \right) \geq 1-2\exp\left(-\frac{-x^2}{2(\sigma^2+2a \E U) + 3 a x} \right).
\end{align}
\end{Theorem}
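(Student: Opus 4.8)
The plan is to obtain \eqref{eq:Klein-Rio} from the entropy (log-Sobolev) method, combining Bousquet's concentration bound for the upper deviation with the Klein--Rio bound for the lower deviation, and then taking a union bound. First, a harmless normalisation: replacing every $f\in\mathcal F$ by $f/a$ preserves $\E f(Y_i)=0$, turns $\sigma^2$ into $\sigma^2/a^2$ and $\E U$ into $\E U/a$, and divides $x$ by $a$, so the exponent in \eqref{eq:Klein-Rio} is unaffected; hence we may assume $a=1$. Write $U=\sup_{f\in\mathcal F}\sum_{i=1}^k f(Y_i)$, and for each $i$ introduce the leave-one-out variable $U_i=\sup_{f\in\mathcal F}\sum_{j\neq i}f(Y_j)$, which depends only on $(Y_j)_{j\neq i}$. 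Comparing $U$ with $U_i$ through their respective maximisers gives the two-sided bound $-1\le U-U_i\le 1$, the sub-additivity estimate $\sum_{i=1}^k(U-U_i)\le U$, and — using $\E f(Y_i)=0$ to absorb the centring — the variance estimate $\sum_{i=1}^k\E_i\big[(U-U_i)^2\big]\le \sigma^2+2\,\E U$, where $\E_i$ denotes expectation conditional on $(Y_j)_{j\neq i}$. These are the standard inputs of the method.

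For the upper deviation, fix $\lambda>0$ and set $\psi(\lambda)=\log\E\,e^{\lambda(U-\E U)}$. Tensorisation of the entropy of $e^{\lambda U}$ reduces the task to bounding each conditional entropy $\mathrm{Ent}_i(e^{\lambda U})$ (with $\mathrm{Ent}_i$ the entropy taken conditionally on $(Y_j)_{j\neq i}$); feeding the variational formula for entropy together with the sub-additivity and variance estimates above into the tensorised inequality — the convexity argument at the heart of Bousquet's proof — produces the Bennett-type bound $\psi(\lambda)\le v\,(e^{\lambda}-\lambda-1)$ for all $\lambda\ge 0$, with $v=\sigma^2+2\,\E U$. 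Using $e^{\lambda}-\lambda-1\le \tfrac{\lambda^2/2}{1-\lambda/3}$ on $[0,3)$, the Chernoff optimisation $\PP(U\ge \E U+x)\le\exp\big(\inf_{\lambda>0}(v(e^{\lambda}-\lambda-1)-\lambda x)\big)$ yields a sub-gamma tail that is at least as strong as the upper half of \eqref{eq:Klein-Rio} with $a=1$, namely $\PP(U\ge \E U+x)\le\exp\!\big(-x^2/(2(\sigma^2+2\,\E U)+3x)\big)$.

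For the lower deviation one runs the same machine on $e^{-\lambda U}$, $\lambda>0$: tensorisation, the variational bound on each $\mathrm{Ent}_i(e^{-\lambda U})$ — where now the \emph{upper} bound $U-U_i\le 1$ is what keeps the conditional-entropy term under control — and the same two summation estimates give a matching differential inequality for $\lambda\mapsto\log\E\,e^{-\lambda(U-\E U)}$, whose integration (the Herbst argument) and Chernoff optimisation produce $\PP(U\le \E U-x)\le\exp\!\big(-x^2/(2(\sigma^2+2\,\E U)+3x)\big)$. Adding the two tail bounds gives \eqref{eq:Klein-Rio}, and undoing the normalisation turns $\sigma^2+2\,\E U$ into $\sigma^2+2a\,\E U$ while leaving $x$ as is. The main obstacle is the lower deviation: whereas Bousquet's upper-tail estimate is by now standard, matching it with the \emph{same} variance proxy $\sigma^2+2a\,\E U$ on the lower side — rather than a larger one — requires the delicate, asymmetric bookkeeping of Klein together with the subsequent refinement of Rio; this is exactly the content of \cite{klein2005concentration}, which we invoke directly.
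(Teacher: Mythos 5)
The paper does not prove Theorem~\ref{thm:Klein-Rio}: it is imported as an external citation to Klein and Rio \cite{klein2005concentration}, and that citation is the entire justification given. Your proposal therefore does something the paper does not attempt — it sketches the entropy-method argument behind the cited result — before also, at the very end, invoking the citation directly. The outline correctly identifies the main features of that proof: reduction to $a=1$, leave-one-out decomposition, tensorisation of entropy, and the asymmetry between the upper tail (Bousquet) and the lower tail (Klein, refined by Rio).

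One caveat if the sketch is read as a proof rather than a pointer: the third "standard input" you display, $\sum_{i=1}^k\E_i\big[(U-U_i)^2\big]\le \sigma^2+2\E U$, is not available in this form. The left-hand side is a random variable (a function of $(Y_j)_{j\neq i}$ through the remaining maximiser), while the right-hand side is a constant, and a pointwise domination of this shape does not hold in general. Precisely because no such ready-made variance proxy exists, the arguments of Bousquet and Klein--Rio carry out a considerably more delicate analysis of the conditional entropies $\mathrm{Ent}_i(e^{\pm\lambda U})$ — using the one-sided bound $U-U_i\le f^\ast(Y_i)$ and the self-bounding inequality $\sum_i(U-U_i)\le U$, but \emph{not} a single displayed variance estimate — and this is exactly where the lower-tail case becomes nontrivial. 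Since you ultimately cite \cite{klein2005concentration}, your treatment is consistent with how the paper uses the theorem; but as a self-contained derivation it has a gap at that step, and you should either drop the displayed variance bound or replace it with a correct statement of how the conditional entropy is actually controlled.
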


In the case that interests us, $k=|I_{u,s}^c|=m-s$ and $\mathcal{F}=\mathcal{F}_u$.
We start by verifying the  assumptions of Theorem \ref{thm:Klein-Rio} for the empirical process from \eqref{eq:representation}.

\begin{Lemma}
\label{lem:talagrand.constants}
	The set $\mathcal{F}_u$ satisfies the assumptions in Theorem \ref{thm:Klein-Rio} with $a= \frac{ H_{s,m}}{ \sqrt s} \phi(r)$.
	Moreover,  $\sigma^2\leq 4\mathcal{R}^2(K^\circ)$  and $\E U \leq cL \E\|G\|$ for an absolute constant $c$.
\end{Lemma}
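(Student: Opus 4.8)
The plan is to first center the class $\mathcal{F}_u$ and then estimate the three quantities $a$, $\sigma^2$, and $\E U$ appearing in Theorem \ref{thm:Klein-Rio}, applied with $k=|I_{u,s}^c|$ and $Y_i=\frac{1}{\sqrt m}\inr{X_i,u}Z_i$ for $i\in I_{u,s}^c$. The functions in $\mathcal{F}_u$ are $f_t(y)=\inr{t,y}\IND_{\{\|y\|\leq \xi_u\phi(r)\}}$; strictly speaking these are not centered, so one works with $\bar f_t=f_t-\E_Z f_t(Y_i)$, noting that by symmetry of $Z$ (hence of each $Y_i$) the indicator $\IND_{\{\|\cdot\|\leq\xi_u\phi(r)\}}$ is an even function and $\inr{t,\cdot}$ is odd, so in fact $\E_Z f_t(Y_i)=0$ already. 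This is the reason the symmetry assumption on $Z$ enters. So no recentering is needed and the $\E f(Y_i)=0$ hypothesis holds automatically.

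Next, the uniform bound $a$: for $i\in I_{u,s}^c$ we have $\frac{1}{\sqrt m}|\inr{X_i,u}|\leq \xi_u$ by definition of $\xi_u$, and on the event $\{\|Y_i\|\leq \xi_u\phi(r)\}$ the value $f_t(Y_i)=\inr{t,Y_i}$ obeys $|\inr{t,Y_i}|\leq \|t\|_2\|Y_i\|_2$. One needs this bounded by $\frac{H_{s,m}}{\sqrt s}\phi(r)$; since $\xi_u\leq H_{s,m}/\sqrt s$, it suffices to see $\|Y_i\|_{\ell_2^n}\leq$ something comparable—but actually the cleanest route is to bound $|f_t(Y_i)|\leq \|Y_i\|$ in the norm $\|\cdot\|$ directly: indeed $|\inr{t,y}|\leq \sup_{t\in K^\circ}\inr{t,y}=\|y\|$, and on the truncation event $\|Y_i\|\leq \xi_u\phi(r)\leq \frac{H_{s,m}}{\sqrt s}\phi(r)=a$. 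For the variance proxy, $\sigma^2=\sup_{t}\sum_{i\in I_{u,s}^c}\E_Z f_t^2(Y_i)\leq \sup_t\sum_{i\in I_{u,s}^c}\E_Z\inr{t,Y_i}^2 = \sup_t\sum_{i\in I_{u,s}^c}\frac{1}{m}\inr{X_i,u}^2\inr{t,Z_i}^2$; taking $\E_Z$ inside and using isotropy of $Z$ gives $\E_Z\inr{t,Z_i}^2=\|t\|_2^2$, so this is $\|t\|_2^2\cdot\frac{1}{m}\sum_{i\in I_{u,s}^c}\inr{X_i,u}^2\leq \mathcal{R}^2(K^\circ)\cdot 2\leq 4\mathcal{R}^2(K^\circ)$, using the assumed bound $\frac1m\sum_i\inr{X_i,u}^2\leq 2$.

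Finally, for $\E U$: dropping the truncation indicator can only increase the supremum after taking absolute values, but since the truncated summands need not be nonnegative one argues instead by noting $U=\|\heartsuit_u\|\leq \Psi(u)+\|\clubsuit_u\|+\|\diamondsuit_u\|$ pointwise—or more directly, $\E_Z U\leq \E_Z\|\heartsuit_u\|$ and one compares to the untruncated process: $\E_Z\sup_{t\in K^\circ}\sum_{i\in I_{u,s}^c}f_t(Y_i)\leq \E_Z\sup_{t\in K^\circ}\sum_{i=1}^m \frac{1}{\sqrt m}\inr{X_i,u}\inr{t,Z_i}$ plus the contribution of the truncated-away and the $I_{u,s}$ parts, each of which was already bounded by $\eps\E\|G\|$ in Steps 1–2. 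The leading term is $\E_Z\|\sum_{i=1}^m\frac{1}{\sqrt m}\inr{X_i,u}Z_i\|$, and by gaussian domination (as in Lemma \ref{lem:permutation.controls}) this is at most $cL\|(\frac{1}{\sqrt m}\inr{X_i,u})_{i=1}^m\|_2\E\|G\|=cL\big(\frac1m\sum_i\inr{X_i,u}^2\big)^{1/2}\E\|G\|\leq \sqrt2\,cL\E\|G\|$, again via $\frac1m\sum_i\inr{X_i,u}^2\leq 2$. Absorbing the $O(\eps\E\|G\|)\leq O(\E\|G\|)$ error terms yields $\E U\leq cL\E\|G\|$. The only mildly delicate point is handling the truncation indicator when passing to $\E U$; this is why the gaussian-domination comparison is done on the full (untruncated) sum and the discarded pieces are controlled separately using Steps 1 and 2 rather than trying to symmetrize the truncated process directly.
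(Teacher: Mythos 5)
Your treatment of the centering, of the bound $a$, and of the variance proxy $\sigma^2$ is essentially identical to the paper's (including the correct self-correction from Cauchy--Schwarz in $\ell_2^n$ to the cleaner $|\inr{t,y}|\leq \|y\|\leq\xi_u\phi(r)$ on the truncation event). The genuine difference is the bound on $\E U$. You write $\heartsuit_u$ as the full sum $\tfrac{1}{\sqrt m}\sum_{i=1}^m\inr{X_i,u}Z_i$ minus $\clubsuit_u$ and $\diamondsuit_u$, apply the triangle inequality, bound the full sum by gaussian domination, and absorb the discarded pieces using the Step~1/Step~2 estimates $\E_Z\|\clubsuit_u\|,\E_Z\|\diamondsuit_u\|\leq\eps\E\|G\|$. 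The paper instead disposes of the truncation indicator directly within the sum over $I_{u,s}^c$: since $Z$ is symmetric and $\IND_{\{\|Y_i\|\leq\xi_u\phi(r)\}}$ is unchanged when $Z_i\mapsto\eps_iZ_i$, one may symmetrize and then invoke the Bernoulli contraction principle (conditionally on $(X_i)$ and $(Z_i)$), since $\inr{X_i,u}\IND_{\{\|Y_i\|\leq\xi_u\phi(r)\}}$ is just a bounded scalar multiplier; this yields $\E U\leq\E_Z\bigl\|\tfrac{1}{\sqrt m}\sum_{i\in I_{u,s}^c}\inr{X_i,u}Z_i\bigr\|\leq cL\E\|G\|$ using only $\tfrac1m\sum_i\inr{X_i,u}^2\leq 2$. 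The paper's route is self-contained within the lemma and is precisely the rigorous version of the ``drop the indicator'' step you correctly flagged as delicate; yours is more elementary (triangle inequality plus previously proved bounds) but imports the conclusions of Steps~1 and~2 and hence the conditions \eqref{eq:conc-cond-1}--\eqref{eq:conc-cond-2}, which are in force in the context of Theorem \ref{thm:conc} but are not needed for the paper's argument. One small slip: by \eqref{eq:representation}, $U=\|\heartsuit_u\|$ so $\E_Z U=\E_Z\|\heartsuit_u\|$ with equality rather than merely $\leq$, though this affects nothing.
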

\begin{proof}
	First note that $\E f(Y_i)=0$ for every $f=f_t\in \mathcal{F}_u$ and $1\leq i \leq k$.
	Indeed, since $\|t\|=\|-t\|$ and $Y_i$ is symmetric,
	\[ \E f_t(Y_i)
	=\E_Z \inr{t,Y_i}\IND_{\{\|Y_i\| \leq \xi_u \phi(r)\}}
	=0.\]
	Turning to the estimate on  $a$, let $f=f_t$ for $t\in K^\circ$.
	Then for every $y \in \R^n$,
	\begin{align*}
	|f_t(y)|
	&= |\inr{t,y}| \ \IND_{\{ \| y \| \leq \xi_u \phi(r)\}} \\
	&\leq  \|y\|\IND_{\{ \|y\| \leq \xi_u \phi(r)\}}
	\leq \xi_u\phi(r)
	\leq \phi(r) \frac{H_{s,m}}{\sqrt s}
	\end{align*}
	by the (uniform) upper estimate on $\xi_u$.
	
As for $\sigma^2$, recall that $\frac{1}{m}\sum_{i=1}^m \inr{X_i,u}^2 \leq 2$ and that  $Z$ is isotropic.
	Hence
	\begin{align*}
	\sigma^2
	&\leq \sup_{t \in K^\circ} \sum_{i\in I_{u,s}^c} \frac{1}{m}\inr{X_i,u}^2 \E \inr{Z_i,t}^2 \\
	&\leq 2\sup_{t \in K^\circ} \|t\|_2^2 = 2{\cal R}^2(K^\circ).
	\end{align*}

	Finally, since $(Z_i)_{i=1}^m$ and $(\eps_i Z_i)_{i=1}^m$ have the same distribution by the symmetry of $Z$, a standard Bernoulli contraction argument (see, e.g., \cite[Theorem 4.4]{ledoux1991probability}) applied conditionally on $(X_i)_{i=1}^m$ and $(Z_i)_{i=1}^m$ shows that
	\begin{align*}
	\E U
	& = \E_Z  \E_\eps \sup_{t \in K^\circ} \frac{1}{\sqrt{m}} \sum_{i\in I_{u,s}^c} \inr{X_i,u}\IND_{\{\|Y_i\| \leq \xi_u \phi(r)\}} \varepsilon_i\inr{Z_i,t}
\\
	& \leq \E_Z \left\| \frac{1}{\sqrt{m}} \sum_{i\in I_{u,s}^c}\inr{X_i,u} Z_i  \right\|
	=(\ast) .
	\end{align*}
	Since $(Z_i)_{i=1}^m$ are independent and $L$-subgaussian,  gaussian domination and the assumption that $\frac{1}{m}\sum_{i=1}^m \inr{X_i,u}^2\leq 2$ imply that $(\ast)\leq cL \E \|G\|$	
	for an absolute constant $c$.
\end{proof}

The next step is to control  the denominator in the probability estimate in  \eqref{eq:Klein-Rio}.
Consider $0<x<\E \|G\|$, and note that by Lemma \ref{lem:talagrand.constants},
\begin{equation*}
2(\sigma^2+2a \E U) + 3 a x
\leq c_3 \left({\cal R}^2(K^\circ) + \frac{H_{s,m}}{\sqrt s} \phi(r) \E \|G\|\right) .
\end{equation*}
It follows from the definition of $\phi$ that if 
\[
m \leq c_4 r\exp(d^*(K))
\]
then $\phi(r)\leq c_5 \E\|G\|$, and therefore
\begin{equation*}
2(\sigma^2+2a \E U) + 3 a x
\leq c_6 \left({\cal R}^2(K^\circ) + \frac{H_{s,m}}{\sqrt s} (\E \|G\|)^2\right) .
\end{equation*}

Setting $x=\eps \E\|G\|$ and since $U=\|\heartsuit_u\|$ (see \eqref{eq:representation}), it follows from Theorem \ref{thm:Klein-Rio} that for suitable constants $c_7$, $c_7^\prime$ and $c_7^{\prime \prime}$ we have that:
\begin{tcolorbox}
If
\begin{equation}
\label{eq:conc-cond-3}
\frac{H_{s,m}}{\sqrt{s}} \leq \frac{ c_7 }{ d^\ast(K)}\quad\text{and}\quad
m \leq c_7^\prime r\exp(d^*(K)),
\end{equation}
then for every $u\in B_2^d$, with $\PP_Z$-probability at least $1-2\exp( - c_7^{\prime \prime} \eps^2 d^*(K))$,
\[
\left |\|\heartsuit_u\| -\E_Z \|\heartsuit_u\| \right|
\leq \eps \E \|G\|.
\]
\end{tcolorbox}

\begin{Remark}
It is worth stressing that in contrast to Step 1 and Step 2, the probability estimate here does not hold uniformly for $u\in B_2^d$, but only for a single (arbitrary) $u$.
\end{Remark}

\subsection{Proof of Theorem \ref{thm:conc}}
\label{sec:putting.concentration.proof.together}

First, let us show that for well-chosen $s,r,m$, and $d$,  all the requirements  collected in \eqref{eq:conc-cond-1}, \eqref{eq:conc-cond-2}, and \eqref{eq:conc-cond-3} are indeed satisfied.
Let $c_1,\dots,c_4$ be suitable constants that turn out to depend only on $L$, $C_0$, and $C_1$.
Set
\[s=c_1\frac{d^*(K)}{\log\left( em/d^*(K)\right)}
\ \ \ {\rm and} \ \ \
r = c_2 \min\{ \eps^2 d^*(K) , s\},\]
and let $m\in[c_3 d^*(K), c_4\exp(d^\ast(K))]$.
Then \eqref{eq:conc-cond-1} and  \eqref{eq:conc-cond-2} are satisfied for suitable choices of $c_1,c_2$.
Moreover, as noted previously, one may assume without loss of generality that $\varepsilon^2 d^\ast(K)\geq  1$---otherwise the claim in Theorem \ref{thm:conc} is trivially true; thus, $m$ satisfies  \eqref{eq:conc-cond-3} for suitable choice of  $c_4$.

\vspace{0.5em}
Next, collecting the requirements on $H_{s,m}$ from \eqref{eq:conc-cond-1}, \eqref{eq:conc-cond-2}, and \eqref{eq:conc-cond-3}, one has to verify that
\begin{align}
\label{eq:req.Wsm}
H_{s,m}
\leq c_5 \min\left\{\eps,\eps \sqrt{\frac{s}{r}},  \frac{\sqrt s}{d^\ast(K)} \right\}.
\end{align}
To that end, recall that $r\leq c_2 s$ and that $\eps\geq 1/\sqrt{d^\ast(K)}$; in particular, the third term in \eqref{eq:req.Wsm} is the  dominant one, and \eqref{eq:req.Wsm} is equivalent to having $H_{s,m}\leq c_6 \sqrt{s}/d^\ast(K)$, which was assumed to hold.

Therefore, if
\[(Z_i)_{i=1}^m \in \Omega_s(\mathbb{Z}) \cap \Omega_r(\mathbb{Z}),\]
then
\[\max\left\{ \|\clubsuit_u\|,  \|\diamondsuit_u\|, \E_Z\|\clubsuit_u\|, \E_Z \|\diamondsuit_u\| \right\}
\leq \eps \E\|G\|.\]
To estimate the probability of the event $\Omega_s(\mathbb{Z}) \cap \Omega_r(\mathbb{Z})$, note that $s\log(em/s)\ge c_7 d^\ast(K)$ by the choice  of $s$ and since $m\geq c_3d^\ast(K)$.
In particular, as $\varepsilon\leq 1$, we have that $s\log(em/s)\geq c_7\eps^2 d^\ast(K)$.
Repeating  the  argument for $r$, it is evident from Lemma \ref{lem:sub-Gaussian.sparse} that
\begin{align*}
\PP_Z \left( \Omega_r(\mathbb{Z})\cap \Omega_s(\mathbb{Z}) \right)
&\geq 1-2\exp\left(-C_2 s\log\left(\frac{em}{s}\right) \right) - 2\exp\left(-C_2 r\log\left(\frac{em}{r}\right) \right) \\
&\geq 1-2\exp(-C_3 \varepsilon^2 d^\ast(K))
\end{align*}
for  constants $C_2$ and $C_3$ that depend on $c_7$ and on the absolute constants $C_0^\prime$ and $C_1^\prime$ from Lemma \ref{lem:sub-Gaussian.sparse}.

\vspace{1em}
Finally, one has to show that $\|\heartsuit_u\|$ concentrates around its mean, and that the required concentration holds uniformly in $u\in W$.
As was noted previously, for any $u \in W$, with $\PP_Z$-probability at least $1-2\exp(-c_8 \eps^2 d^*(K))$
\[
\left| \ \|\heartsuit_u\|-\E_Z \|\heartsuit_u\| \ \right| \leq \eps \E \|G\|,
\]
and  $\log |W| \leq d \log(5/\eps)$.
If
\[ d \leq  c_9 \frac{\eps^2  d^*(K)}{\log(5/\eps)}\]
for a suitable constant $c_9$, then by the union bound, with $\PP_Z$-probability at least \linebreak $1-2\exp(-c_{10}\eps^2 d^*(K))$,
\begin{equation} \label{eq:putting-together-uniform}
\max_{u \in W} \left| \ \|\heartsuit_u\|-\E_Z \|\heartsuit_u\| \ \right| \leq \eps \E \|G\|.
\end{equation}

Taking the intersection of $\Omega_r(\mathbb{Z})\cap \Omega_s(\mathbb{Z})$ and the high-probability event on which \eqref{eq:putting-together-uniform} holds completes the proof.
\qed

\section{A lower bound on the conditional expectation---proof of \eqref{eq:conc-2}}
\label{sec:exp.lower}

Let us turn to verifying  \eqref{eq:conc-2} from Lemma \ref{lemma:reduction}.
Once again,  recall that $Z$ is a symmetric, isotropic, $L$-subgaussian random vector in $\mathbb{R}^n$ that has iid coordinates.

\begin{Theorem}
\label{thm:lower.bound.expectation}
	Let $\delta,\eta>0$.
	There are constants $c_1$ and $c_2$ that depend on $\delta$ and $\eta$ such that the following holds.
	Let $m\geq c_1\log n$ and fix $v\in S^{d-1}$.
	If
	\begin{align}
	\label{eq:SB.X}
	|\{i \in\{1,\dots,m\} :  |\inr{X_i,v}|\geq \eta \} | \geq \delta m ,
	\end{align}
	then
	\[ \E_Z\Psi(v) \geq c_2 \E\|G\|.\]
\end{Theorem}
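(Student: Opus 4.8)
The plan is to peel off, by repeated symmetrization and comparison, everything except a single gaussian-like random vector, and then to handle that vector by a moderate-deviation argument. First I would set $a_i=\inr{X_i,v}$ and $I=\{i:|a_i|\ge\eta\}$, so $|I|\ge\delta m$ by \eqref{eq:SB.X}. Since $(Z_i)_{i\notin I}$ is independent of $(Z_i)_{i\in I}$ and centred, Jensen's inequality for the convex function $\|\cdot\|$ gives $\E_Z\Psi(v)\ge\E_Z\|\tfrac1{\sqrt m}\sum_{i\in I}a_iZ_i\|$. Since the $Z_i$ are symmetric I may insert independent Rademacher signs $\eps_i$ and absorb $\sgn(a_i)$ into them, so the right-hand side equals $\E_{Z,\eps}\|\sum_{i\in I}\eps_i\tfrac{|a_i|}{\sqrt m}Z_i\|$; conditioning on $(Z_i)$ and applying the contraction principle for Rademacher averages (monotonicity in the coefficients, using $\tfrac{|a_i|}{\sqrt m}\ge\tfrac{\eta}{\sqrt m}$) and then discarding the signs again yields $\E_Z\Psi(v)\ge\tfrac{\eta}{\sqrt m}\E_Z\|\sum_{i\in I}Z_i\|$. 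Writing $N:=|I|\ge\delta m$, noting $\sum_{i\in I}Z_i$ has the law of $\sum_{i=1}^NZ_i$ and $\sqrt N/\sqrt m\ge\sqrt\delta$, it remains to prove the core estimate $\E\|\tfrac1{\sqrt N}\sum_{i=1}^NZ_i\|\ge c(L)\E\|G\|$ whenever $N\ge c_1(L)\log n$.

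For the core estimate, let $V=\tfrac1{\sqrt N}\sum_{i=1}^NZ_i$; since $Z$ has i.i.d.\ symmetric, variance-one, $L$-subgaussian coordinates, $V$ has i.i.d.\ coordinates with the law of $W_N=\tfrac1{\sqrt N}\sum_{i=1}^N\zeta_i$. I would establish three facts about $W_N$, with constants depending only on $L$: the subgaussian upper bound $\PP(|W_N|\ge x)\le 2e^{-cx^2/L^2}$; a moderate-deviation lower bound $\PP(W_N\ge x)\ge c_0e^{-C_0x^2}$ valid for $0\le x\le\kappa\sqrt N$; and an Esseen-type anticoncentration bound $\PP(|W_N|<x)\le C(L)(x+1/\sqrt N)$. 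The moderate-deviation bound comes from truncating $\zeta_i$ at a level $M(L)$ retaining a constant fraction of the variance, applying Cramér's lower large-deviation bound to the bounded truncated sum (its rate function is of order $x^2$ on the relevant range, which is why the range reaches $\kappa\sqrt N$), and discarding the independent symmetric remainder at the cost of a factor $\tfrac12$. From these three facts, for a small enough $\lambda=\lambda(L)$ and the shift $\rho=C''(L)\lambda/\sqrt N$, the law of $|W_N|$ stochastically dominates that of $\lambda|\tilde g|-\rho$, where $\tilde g=g\,\IND_{\{|g|\le R\}}$ with $R=\kappa\sqrt N/\lambda$: the exponential separation handles $x$ bounded away from $0$, the anticoncentration bound and the shift handle the small-$x$ and lattice-atom regime, and the truncation of $\tilde g$ makes the domination trivial for $x>\kappa\sqrt N$.

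Then I would couple $V$ with the truncated gaussian vector $\tilde G$ (i.i.d.\ coordinates $\tilde g$) by first revealing the common coordinate signs and using the monotone coupling of the absolute values, so that coordinatewise $|V_j|\ge\lambda|\tilde G_j|-\rho$ with matching sign. Conditioning on the absolute values, the Rademacher contraction principle (monotonicity of norms of sign-sums in the coefficients) together with the triangle inequality to strip off the $\rho$-shift gives $\E\|V\|\ge\lambda\E\|\tilde G\|-\rho\,\E\|\sum_j\eps_je_j\|\ge\lambda\E\|\tilde G\|-\tfrac14\lambda\E\|G\|$ once $N\ge c(L)$, using $\E\|\sum_j\eps_je_j\|\le\sqrt{\pi/2}\,\E\|G\|$. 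Finally $\E\|\tilde G\|\ge\E\|G\|-\E\|G-\tilde G\|$, and I would bound $\E\|G-\tilde G\|$ by Cauchy--Schwarz through $\|G-\tilde G\|\le\mathcal{R}(K^\circ)\|G-\tilde G\|_2$ and $\PP(\exists j:\ |g_j|>R)\le 2ne^{-R^2/2}$; combined with $\E\|G\|\ge\sqrt{2/\pi}\,\mathcal{R}(K^\circ)$ this gives $\E\|G-\tilde G\|\le\tfrac14\E\|G\|$ as soon as $R^2\ge C'\log n$, i.e.\ $N\ge c_1(L)\log n$. Putting the pieces together, $\E\|V\|\ge\tfrac12\lambda\E\|G\|$, which proves the theorem with $c_2=\tfrac12\lambda(L)\eta\sqrt\delta$ and $c_1=c_1(L)/\delta$.

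The delicate part is the core estimate: one must produce gaussian-order lower tails for $W_N$ all the way out to scale $\sqrt{\log n}$ (this is exactly what forces $N\gtrsim\log n$ and where the hypothesis $m\ge c_1\log n$ is used), deal with lattice distributions where $W_N$ carries an atom of size $\sim1/\sqrt N$ at the origin, and---crucially---lose only an absolute factor when passing from coordinatewise tail domination to a bound on $\E\sup_{t\in K^\circ}\inr{\cdot,t}$ for an arbitrary body $K^\circ$; the sign-conditioning plus contraction trick is what makes this last point work, a naive Lipschitz/$\ell_2$ comparison being hopelessly lossy here. The reduction step, by contrast, is routine.
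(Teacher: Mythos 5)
Your proof is correct, but follows a genuinely different route from the paper's. Both arguments reduce (via Jensen, symmetrization and contraction) to showing that $\E_Z\|\tfrac{1}{\sqrt m}\sum_{i\in I}Z_i\|\geq c(L)\E\|G\|$ once $|I|\geq\delta m$ and $m\geq c(L,\delta)\log n$, and both exploit the fact that $Y_j:=\tfrac{1}{\sqrt m}\sum_{i\in I}\inr{Z_i,e_j}$, $j=1,\dots,n$, are iid symmetric random variables. The paper then controls the \emph{order statistics}: Montgomery-Smith's Rademacher lower-tail estimate yields $\E Y_k^\ast\geq c\sqrt{\log(n/k)}$ (Lemma~\ref{lem:oder.stat}), and combining the rearrangement identity of Lemma~\ref{lem:permutation.sorting.shuffeling} with the multiplicative factorization $g_j^\ast=\frac{g_j^\ast}{\E Y_j^\ast+c_1}\cdot(\E Y_j^\ast+c_1)$ and the contraction principle gives the comparison of $\E\|G\|$ with $\E\sup_{t\in K^\circ}\sum_j Y_jt_j$. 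You instead control a \emph{single coordinate law}: subgaussian upper tails, a Cram\'er-type moderate-deviation lower bound out to scale $\sqrt N$, and Berry--Esseen anticoncentration together yield the stochastic domination $|Y_1|\succeq\lambda|\tilde g|-\rho$ over a shifted, truncated gaussian, and a monotone coupling sharing the same Rademacher sign across coordinates reduces the norm comparison to one contraction-plus-triangle-inequality step. Both routes use the iid structure of $Z$'s coordinates and the condition $m\geq c\log n$ in essentially the same place---to reach the extreme order statistics $\E Y_k^\ast$ for $k=O(1)$ in the paper, or the truncation level $R\sim\sqrt N$ in your argument. Your approach is more classical-probabilistic in flavour and avoids the permutation lemma; in return, the three-regime matching in the stochastic-domination step needs some care in the write-up: the small-$x$ regime requires the $\rho$-shift precisely because of the possible atom of size $\sim 1/\sqrt N$ at the origin, the constant $\lambda$ must be chosen compatibly with the Cram\'er constants, and the ``independent symmetric remainder'' in your truncation step is in fact only conditionally independent given which coordinates are truncated. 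These are all fixable, and your underlying observation---that a naive Lipschitz or $\ell_2$ comparison would be hopelessly lossy, and sign-conditioning plus contraction is what saves the day---is exactly the mechanism that also powers the paper's proof, just packaged differently.
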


\begin{Remark}
A version of Theorem \ref{thm:lower.bound.expectation} was  established in \cite{mendelson2022isomorphic} under different conditions (most notably that $m \geq c n$ instead of $m\geq c \log n$ and without the assumption that the coordinates of $Z$ are iid).
The idea was to find a lower bound on the Bernoulli processes
\[ \E_\eps \sup_{t\in K^\circ} \frac{1}{\sqrt{m}}\sum_{i=1}^m \varepsilon_i \inr{X_i,v} \inr{Z_i,t} ,\]
conditioned on a typical realization of $(X_i)_{i=1}^m$ and $(Z_i)_{i=1}^m$, and the proof relied on the construction of  an approximation of $K^\circ \subset \R^n$ consisting of $\exp(c m)$ points.
Unfortunately, in the worst case it forced $m$  to be proportional to $n$.
Since the aim here is for $m$ to be as small as possible---specifically a low-degree polynomial function of the critical dimension $d^\ast(K)$---a different argument is called for.
\end{Remark}

Note that the condition $m\geq c_1 \log n$ in Theorem \ref{thm:lower.bound.expectation} is satisfied in the setting of Theorem \ref{thm:DM-main}.
	Indeed, $m$ is proportional to $(d^\ast(K))^\alpha$  for $\alpha\geq 4$ (see \eqref{eq:cond-on-m}),  $d^\ast(K)\geq C_0\log n$ for an absolute constant $C_0$, and  $n\geq C(L,\alpha)$ for a constant $C$ that we are free to choose as large as needed.

	We will verify in  Lemma \ref{lem:X.SB}  that there are constants $\delta,\eta>0$ depending only on $L$ such that \eqref{eq:SB.X} is satisfied.
	
	Finally, by gaussian domination, if  $\frac{1}{m}\sum_{i=1}^m \inr{X_i,v}^2\leq 2$ then  $\E_Z\Psi(v) \leq C_2 L \E \|G\|$ for an absolute constant $C_2$.
	Thus, in the setting of Theorem \ref{thm:DM-main},
	\[\E_Z \Psi(v) \in [C_3 \E\|G\|, C_4 \E\|G\|]\]
	for constants $C_3$ and $C_4$ that depend only on $L$.

\vspace{0.5em}
Recall that for $1\leq i\leq m$, $\lambda_i^v=m\int_{(\frac{i-1}{m}, \frac{i}{m}]}  F_{ \inr{X,v} }^{-1}(p) \,dp$.

\begin{Lemma}
\label{lem:X.SB}
Let $X$ satisfy $L_4-L_2$ norm equivalence with constant $L$.
	There are constants $\delta,\eta>0$ that depend only on $L$ such that the following holds.
	If $m\geq 2/\delta$ and 
	\begin{align}
	\label{eq:small.ball.X}
	 \left( \frac{1}{m}\sum_{i=1}^m \left| \inr{X_i,v}^\sharp -  \lambda_i^v\right|^2 \right)^{1/2}
\leq \eta \sqrt{\delta},
	\end{align}
	then
	\[|\{i \in\{1,\dots,m\} :  |\inr{X_i,v}|\geq \eta \} | \geq \delta m. \] 
\end{Lemma}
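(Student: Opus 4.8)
The plan is to argue by contradiction: suppose that fewer than $\delta m$ of the coordinates $|\inr{X_i,v}|$ exceed $\eta$. Since the quantity $\frac{1}{m}\sum_{i=1}^m|\inr{X_i,v}^\sharp - \lambda_i^v|^2$ is small by \eqref{eq:small.ball.X}, the empirical distribution $\frac{1}{m}\sum_i \delta_{\inr{X_i,v}^\sharp}$ is $w_2$-close to the (deterministic) ``quantile measure'' $\frac{1}{m}\sum_i\delta_{\lambda_i^v}$, which is in turn $W_2$-close to the true law $\mu^v$ of $\inr{X,v}$. The key point is that $\mu^v$, being the law of a centred, variance-one random variable satisfying $L_4$--$L_2$ norm equivalence with constant $L$, cannot place too much mass near $0$: by the Paley--Zygmund inequality, $\PP(|\inr{X,v}|\geq \eta)\geq (1-\eta^2)^2/L^4 =: 2\delta$ for a suitable $\eta=\eta(L)$, so that the quantile values $\lambda_i^v$ with $i$ in the top $2\delta$-fraction all have $|\lambda_i^v|\gtrsim \eta$ (more precisely, at least a $2\delta$-proportion of the $\lambda_i^v$ satisfy $|\lambda_i^v|\geq \eta$, since the $\lambda_i^v$ are averages of $F^{-1}_{\inr{X,v}}$ over the dyadic cells and hence ``track'' the true quantiles up to the $1/m$ discretization, which is harmless once $m\geq 2/\delta$).

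Concretely, I would first fix $\eta=\eta(L)$ via Paley--Zygmund so that $\#\{i : |\lambda_i^v|\geq 2\eta\}\geq 2\delta m$ (absorbing the $1/m$ rounding error using $m\geq 2/\delta$), then set $\delta=\delta(L)$ accordingly. Now if the conclusion failed, the set $I=\{i : |\inr{X_i,v}|< \eta\}$ would have $|I|>(1-\delta)m$, so at least $\delta m$ indices $i$ lie in $I$ \emph{and} satisfy $|\lambda_{\pi(i)}^v|\geq 2\eta$ for the matching index $\pi(i)$ in the monotone rearrangement — here one uses that the sorted sequence $\inr{X_i,v}^\sharp$ and the sorted sequence $\lambda_i^v$ are aligned, so the $w_2$ distance is realized by the identity permutation after sorting. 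For each such $i$ the discrepancy $|\inr{X_i,v}^\sharp - \lambda_i^v|\geq \eta$, hence
\[
\frac{1}{m}\sum_{i=1}^m\left|\inr{X_i,v}^\sharp - \lambda_i^v\right|^2 \geq \frac{\delta m \cdot \eta^2}{m} = \delta\eta^2,
\]
which contradicts \eqref{eq:small.ball.X}, since $\sqrt{\delta\eta^2}=\eta\sqrt\delta$ is exactly the threshold there (with room to spare if one takes strict inequalities).

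The main obstacle I anticipate is the bookkeeping around the relationship between the sorted empirical coordinates, the quantile averages $\lambda_i^v$, and the true distribution function $F^{-1}_{\inr{X,v}}$ — in particular making rigorous that ``few empirical coordinates above $\eta$'' forces ``many sorted coordinates far below the corresponding $\lambda_i^v$ values,'' which requires matching the order statistics correctly and handling the $1/m$ quantization of the $\lambda_i^v$. Once that alignment is set up cleanly, the only genuinely quantitative input is the Paley--Zygmund small-ball estimate, which is immediate from $L_4$--$L_2$ equivalence: $\PP(|Y|\geq \eta) \geq \frac{(\E Y^2 - \eta^2)^2}{\E Y^4}\geq \frac{(1-\eta^2)^2}{L^4}$ for $Y=\inr{X,v}$, giving an explicit dependence of $\delta$ and $\eta$ on $L$ alone.
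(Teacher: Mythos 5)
Your proposal is correct and follows essentially the same route as the paper: Paley--Zygmund gives a small-ball estimate for $\inr{X,v}$, which translates into $|\lambda_i^v|\geq 2\eta$ for a $\sim 2\delta$ fraction of sorted indices, and a counting argument then shows that having fewer than $\delta m$ coordinates with $|\inr{X_i,v}|\geq\eta$ would force at least $\sim\delta m$ sorted positions to contribute a discrepancy of size $\geq\eta$, contradicting \eqref{eq:small.ball.X}. The only slip is cosmetic: your Paley--Zygmund step lower-bounds $\PP(|\inr{X,v}|\geq\eta)$ whereas you later need $|\lambda_i^v|\geq 2\eta$, so you should apply Paley--Zygmund at threshold $2\eta$ from the outset (as the paper does, obtaining $\PP(|\inr{X,v}|\geq 2\eta)\geq 4\delta$ and then reducing WLOG to one sign).
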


Note that \eqref{eq:small.ball.X} holds for  $(X_i)_{i=1}^m\in\Omega(\mathbb{X})$. 
Indeed, by \eqref{eq:SW-2-est-DM}, the left hand side in  \eqref{eq:small.ball.X}  is smaller than $2\varepsilon$, and we may assume that  $\eps\leq C$ for an arbitrarily small constant $C=C(L)$, e.g., for $C=\eta\sqrt\delta$.

\begin{proof}
	By the $L_4-L_2$ norm equivalence and the Paley-Zygmund inequality (see, e.g., \cite[Corollary 3.3.2]{de2012decoupling}), there are constants $\eta$ and $\delta$ that depend only on $L$ such that
	\[\mu^v\left( \left( -2\eta,2\eta\right)^c \right)
	= \PP\left( |\inr{X,v} |\geq 2\eta \right)
	 \geq 4\delta.\]
	In particular $\PP( \inr{X,v} \geq 2\eta )\geq 2\delta$ or $\PP( \inr{X,v} \leq - 2\eta )\geq 2\delta$, and we focus without loss of generality on the latter case.
	Then clearly  
	\[\lambda^v_i\leq -2\eta \quad\text{for } i=1,\dots, 2\delta m.\]
	It is straightforward to verify that if  $\inr{X_{\delta m},v}^\sharp > -\eta$, then \eqref{eq:small.ball.X} cannot be true.
	In particular, \eqref{eq:small.ball.X} implies that $|\{i :  \inr{X_i,v} \leq -\eta \} | \geq \delta m$ and  \eqref{eq:SB.X} holds.
\end{proof}

\subsection{Proof of Theorem \ref{thm:lower.bound.expectation}}
The proof of Theorem \ref{thm:lower.bound.expectation} is based on two facts.
To formulate them, let $(x^\ast_i)_{i=1}^m$ be the non-increasing rearrangement of $(|x_i|)_{i=1}^m$.

\begin{Lemma}
\label{lem:permutation.sorting.shuffeling}
	Let $w$ be a symmetric random variable, set $(w_i)_{i=1}^n$ to be independent copies of $w$ and put $W=(w_i)_{i=1}^n$.
	Let $\eps=(\eps_i)_{i=1}^n$ be the Bernoulli vector and set $\pi$ to be the uniform distribution on the permutations of $\{1,\dots,n\}$.
	If $W$, $\eps$ and $\pi$ are independent, then
	\[ \E_W \sup_{t\in K^\circ} \sum_{i=1}^n w_i t_i
	=\E_{W,\pi,\eps} \sup_{t\in K^\circ} \sum_{i=1}^n  w^\ast_{i} \eps_{\pi(i)}t_{\pi(i)}.	 \]
\end{Lemma}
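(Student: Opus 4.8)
The plan is to show that both sides equal the same quantity by exploiting two symmetries of the independent symmetric coordinates $w_1,\dots,w_n$: a sign symmetry and an exchangeability (permutation) symmetry. The starting point is that, by symmetry of $w$, the vector $W=(w_i)_{i=1}^n$ has the same distribution as $(\eps_i w_i)_{i=1}^n$ for an independent Bernoulli vector $\eps$, and also the same distribution as $(w_{\pi(i)})_{i=1}^n$ for an independent uniformly random permutation $\pi$; combining the two, $W \stackrel{d}{=} (\eps_{\pi(i)} w_{\pi(i)})_{i=1}^n$. Taking the supremum functional $t\mapsto \sup_{t\in K^\circ}\sum_i w_i t_i$ on both sides and then expectations immediately gives
\[
\E_W \sup_{t\in K^\circ}\sum_{i=1}^n w_i t_i
= \E_{W,\pi,\eps}\sup_{t\in K^\circ}\sum_{i=1}^n \eps_{\pi(i)} w_{\pi(i)} t_i .
\]
So the first step reduces the claim to identifying the right-hand side above with the right-hand side of the lemma, which is $\E_{W,\pi,\eps}\sup_{t\in K^\circ}\sum_i w_i^\ast \eps_{\pi(i)} t_{\pi(i)}$.

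For the second step, I would condition on $W$ and argue that the conditional expectation over $(\pi,\eps)$ of the two expressions agree. Fix a realization of $W$. On the one hand, reindexing the sum $\sum_i \eps_{\pi(i)} w_{\pi(i)} t_i$ by $j=\pi(i)$ (i.e.\ replacing $t$ by $t\circ\pi^{-1}$, which ranges over $K^\circ$ as $t$ does when $K^\circ$ — well, here we should not change $K^\circ$; rather keep $t$ and substitute), we get $\sup_{t\in K^\circ}\sum_{j} \eps_j w_j t_{\pi^{-1}(j)}$; since $\pi^{-1}$ is again uniform and independent of everything, this has the same distribution in $\pi$ as $\sup_{t\in K^\circ}\sum_j \eps_j w_j t_{\pi(j)}$. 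On the other hand, the target expression, after the substitution $t\mapsto t\circ\pi$, becomes $\sup_{t\in K^\circ}\sum_i w_i^\ast \eps_{i} t_{i}$ — wait, more carefully: $\sum_i w_i^\ast \eps_{\pi(i)} t_{\pi(i)} = \sum_j w_{\pi^{-1}(j)}^\ast \eps_j t_j$, so it equals $\sup_{t\in K^\circ}\sum_j w_{\pi^{-1}(j)}^\ast \eps_j t_j$. Thus both quantities are of the form $\E_{\pi,\eps}\sup_{t\in K^\circ}\sum_j a_j \eps_j t_j$ where $a$ is a uniformly random rearrangement of the coordinates of $W$ — in one case $a=(w_{\pi(j)})_j$, in the other $a=(w_{\pi^{-1}(j)}^\ast)_j$. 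The key observation is that the multiset of values of a uniformly random permutation of $(w_1,\dots,w_n)$, once we also randomize signs by $\eps$, has exactly the same law as a uniformly random permutation of $(w_1^\ast,\dots,w_n^\ast)$ with random signs: both produce a uniformly random ordering of the multiset $\{\pm|w_1|,\dots\}$ — here one must be slightly careful because $w_i^\ast=|w_i|$ discards the original signs, but those signs are absorbed into the independent Bernoulli vector $\eps$ precisely because $\eps_j w_{\pi(j)} \stackrel{d}{=} \eps_j |w_{\pi(j)}|$ coordinatewise and jointly (conditionally on $|W|$), so the two random vectors $(\eps_j w_{\pi(j)})_j$ and $(\eps_j w_{\pi(j)}^\ast)_j$ have the same joint law. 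Feeding this into the supremum-and-expectation completes the identification.

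The main obstacle — really the only subtle point — is bookkeeping the interplay between the permutation and the index set of $t$: the supremum is over $t$ in the fixed polar body $K^\circ\subset\R^n$, and when we relabel coordinates via $\pi$ we must be sure we are relabeling the $w$'s and not secretly transforming $K^\circ$ into $\pi K^\circ$. The clean way around this is to never substitute inside the supremum, but instead to note that for each fixed permutation $\sigma$ and sign pattern $\theta$, $\sup_{t\in K^\circ}\sum_i (\theta_{\sigma(i)} w_{\sigma(i)}) t_i$ and $\sup_{t\in K^\circ}\sum_i w_i t_i$ need not be equal pointwise, but after averaging over the uniform $\pi$ (equivalently, over its inverse, since the uniform law on $S_n$ is inversion-invariant) and over the uniform $\eps$, the averaged quantity depends on $W$ only through the multiset $\{(|w_i|)\}$, hence only through $(w_i^\ast)_i$. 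Thus $\E_{\pi,\eps}\sup_{t\in K^\circ}\sum_i \eps_{\pi(i)} w_{\pi(i)} t_i$ is a function of $(w_i^\ast)_i$ alone, and evaluating it on the deterministic vector $(w_i^\ast)_i$ in place of $W$ — which is legitimate since the formula is the same — yields exactly $\E_{\pi,\eps}\sup_{t\in K^\circ}\sum_i w_i^\ast \eps_{\pi(i)} t_{\pi(i)}$. Taking $\E_W$ of both sides then gives the lemma.
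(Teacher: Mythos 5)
Your argument is in essence the paper's: both proofs rest on (a) the distributional identity $W \stackrel{d}{=} (\eps_i w^\ast_{\pi(i)})_{i=1}^n$, which you reach via the same permutation-and-sign exchangeability, and (b) reindexing the sum and using that $\pi^{-1}$ is distributed as $\pi$ to move the permutation from the $w^\ast$'s onto the pair $(\eps,t)$. The paper applies (a) and (b) once each, in two lines; your writeup wanders through several equivalent reindexings before arriving at the same place, but the ideas are identical.

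One small sloppiness at the very end: evaluating $f(W)=\E_{\pi,\eps}\sup_{t\in K^\circ}\sum_i \eps_{\pi(i)} w_{\pi(i)} t_i$ at $W=(w^\ast_j)_j$ gives, literally, $\E_{\pi,\eps}\sup_{t\in K^\circ}\sum_i \eps_{\pi(i)} w^\ast_{\pi(i)} t_i$, which is not syntactically the claimed $\E_{\pi,\eps}\sup_{t\in K^\circ}\sum_i w_i^\ast \eps_{\pi(i)} t_{\pi(i)}$. They are equal, but you need one more reindexing $j=\pi(i)$ together with $\pi^{-1}\stackrel{d}{=}\pi$ and $(\eps_j)_j\stackrel{d}{=}(\eps_{\pi(j)})_j$ to see it --- exactly the step the paper records as ``because $\pi^{-1}$ and $\pi$ have the same distribution.'' Since you already carry out such reindexings in the middle paragraph, this is an omission of exposition rather than of substance, but as written the last sentence asserts an identity it has not established.
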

\begin{proof}
	The key is the (rather obvious) observation that $	(w_1,\dots,w_n) $ has the same distribution as $(\eps_1 w^\ast_{\pi(1)},\dots, \eps_n w^\ast_{\pi(n)} )$.
	Therefore,
	\begin{align*}
	\E_W \sup_{t\in K^\circ} \sum_{i=1}^n w_i t_i
	&=\E_{W,\pi,\eps} \sup_{t\in K^\circ} \sum_{i=1}^n \eps_i w^\ast_{\pi(i)} t_i \\
	&=\E_{W,\pi,\eps} \sup_{t\in K^\circ} \sum_{i=1}^n  w^\ast_{i} \eps_{\pi(i)}t_{\pi(i)},	
	\end{align*}
	where the last equality holds because $\pi^{-1}$ and $\pi$ have the same distribution.
\end{proof}

\begin{Lemma}
\label{lem:oder.stat}
	Let $\delta>0$.
	There are constants $c_1,\dots,c_4$ that depend only on $\delta$ and $L$ such that the following holds.
	Let $m \geq c_1 \log n$, consider $I\subset\{1,\dots,m\}$, $|I|\geq\delta m$ and set for $j=1,\dots,n$,
	\begin{align}
	\label{eq:def.Y.j}
	Y_j=\frac{1}{\sqrt m} \sum_{i\in I}  \inr{Z_i, e_j}.
	\end{align}
	Then for any $1\leq k \leq n$, $\E|Y_k|\geq c_2$.
	Also, for any $1\leq k\leq c_3n$,
	\begin{align}
	\label{eq:def.Y.rearrangment}
	 \E Y^\ast_k \geq c_4 \sqrt{ \log \left( \frac{n}{k} \right) } .
	 \end{align}
\end{Lemma}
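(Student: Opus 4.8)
The plan is to analyse the random variables $Y_j = m^{-1/2}\sum_{i\in I}\inr{Z_i,e_j}$ one coordinate at a time, exploiting that $Z$ has iid coordinates. Fix $j$; then $Y_j = m^{-1/2}\sum_{i\in I} z_{i,j}$ where $z_{i,j}=\inr{Z_i,e_j}$ are iid copies of a single mean-zero, variance-one, symmetric, $L$-subgaussian scalar random variable $w$ (the common law of the coordinates of $Z$). Since $|I|\ge\delta m$, the sum has $|I|\sim\delta m$ terms, so $\var(Y_j)=|I|/m\in[\delta,2]$ after normalisation (in fact exactly $|I|/m$). The first claim, $\E|Y_k|\ge c_2$, is then a one-dimensional small-ball/anti-concentration statement: a normalised sum of iid mean-zero variance-one $L$-subgaussian variables has $\E|Y_j|\ge c(L,\delta)>0$. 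I would prove this via the Paley–Zygmund inequality applied to $Y_j^2$, using $\E Y_j^2 = |I|/m\ge\delta$ and the fourth-moment bound $\E Y_j^4\le C(L)(\E Y_j^2)^2$ which follows from $L$-subgaussianity (hence $L_4$–$L_2$ equivalence) of the sum; this gives $\PP(|Y_j|\ge c_1)\ge c_2$ and therefore $\E|Y_j|\ge c_1 c_2$, uniformly in $j$ and in $k\le n$.

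For the second claim, the bound on the order statistics $\E Y_k^\ast \ge c_4\sqrt{\log(n/k)}$ for $k\le c_3 n$, the point is that the $Y_1,\dots,Y_n$ are each approximately gaussian at the relevant scale, so the top $k$ of them should behave like the top $k$ order statistics of $n$ iid standard gaussians, whose $k$-th largest value is of order $\sqrt{\log(n/k)}$. The natural tool is a quantitative CLT / Berry–Esseen-type two-sided tail bound: since $|I|\ge\delta m\ge c_1\delta\log n$, each $Y_j$ is a normalised sum of at least $\Omega(\log n)$ iid $L$-subgaussian summands, so for $t$ in the range $1\le t\le c\sqrt{|I|}$ — in particular for $t$ up to $c'\sqrt{\log n}$ — one has a gaussian-type \emph{lower} tail estimate $\PP(|Y_j|\ge t)\ge c\exp(-Ct^2)$. (Such a lower bound for subgaussian sums follows, e.g., from the classical results on moderate deviations, or can be derived from the $L_p$-norm growth $\|Y_j\|_{L_p}\sim\sqrt p$ up to $p\sim|I|$ together with the Paley–Zygmund inequality applied at the right moment.) Then, for $k\le c_3 n$, choosing $t=c_4\sqrt{\log(n/k)}$ makes $\PP(|Y_j|\ge t)\ge c_5 k/n$ (using the iid coordinates of $Z$, the $Y_j$ are \emph{not} independent, but they share the same marginal law, so $\E\sum_{j=1}^n\IND_{\{|Y_j|\ge t\}}\ge n\cdot c_5 k/n = c_5 k$); hence the expected number of indices with $|Y_j|\ge t$ is at least $c_5 k\ge k$ after adjusting constants. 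This forces $\E Y_k^\ast\ge$ (a constant fraction of) $t$ — more carefully, one needs the number of large coordinates to exceed $k$ \emph{with nontrivial probability}, not just in expectation, so I would instead bound $\E Y_k^\ast$ from below by integrating the tail: $\E Y_k^\ast\ge t\,\PP(\,|\{j:|Y_j|\ge t\}|\ge k\,)$, and control that last probability via a second-moment (Paley–Zygmund) argument on $N_t:=\sum_j\IND_{\{|Y_j|\ge t\}}$, which requires an upper bound on $\E N_t^2=\sum_{j,j'}\PP(|Y_j|\ge t,\,|Y_{j'}|\ge t)$.

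The main obstacle is exactly this last point: the coordinates $Y_1,\dots,Y_n$ are \emph{dependent}, since they are built from the same vectors $Z_1,\dots,Z_m$. Controlling $\E N_t^2$ requires bounding the joint tail $\PP(|Y_j|\ge t,|Y_{j'}|\ge t)$ for $j\ne j'$; here one uses that for fixed $i$, distinct coordinates $z_{i,j},z_{i,j'}$ of $Z_i$ are \emph{independent} (iid coordinates), so $(Y_j,Y_{j'})$ is a normalised sum of iid mean-zero pairs in $\R^2$ with identity covariance, and a two-dimensional subgaussian/CLT estimate gives $\PP(|Y_j|\ge t,|Y_{j'}|\ge t)\le C\exp(-ct^2)\cdot\exp(-ct^2)\le C(k/n)^2$ in the relevant range, i.e.\ essentially the product bound. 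Feeding this into Paley–Zygmund for $N_t$ yields $\PP(N_t\ge \tfrac12\E N_t)\ge c$, and since $\E N_t\ge c_5 k\ge 2k$ (tuning constants) this gives $\PP(N_t\ge k)\ge c$, hence $\E Y_k^\ast\ge c\cdot c_4\sqrt{\log(n/k)}$, which is the claim. The constant $c_1$ (the requirement $m\ge c_1\log n$) enters precisely to guarantee $|I|\ge\delta m$ is large enough that the gaussian-type tail lower bound for $Y_j$ is valid out to scale $t\asymp\sqrt{\log n}$; the constant $c_3<1$ enters so that $\log(n/k)\ge 1$ and the whole moderate-deviation regime is legitimate.
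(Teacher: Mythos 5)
Your overall skeleton (reduce to a one-dimensional lower tail estimate for a single $Y_j$, then boost to an order-statistic estimate) is the same as the paper's, but there is one concrete misconception that substantially changes the shape of the argument, and one important step left vague.

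\paragraph{The coordinates $Y_1,\dots,Y_n$ are actually independent.}
You claim the $Y_j$ are \emph{dependent} because ``they are built from the same vectors $Z_1,\dots,Z_m$,'' and then devote the latter half of the argument (Paley--Zygmund on $N_t$, pairwise joint tails, controlling $\E N_t^2$) to working around this. But this is not the case: since $Z$ has iid \emph{coordinates}, the array $\bigl(\inr{Z_i,e_j}\bigr)_{i,j}$ is an iid array, and $Y_j$ depends only on the $j$-th column. Hence $Y_1,\dots,Y_n$ are iid. (This is exactly the place where the paper uses the standing assumption that $Z$ has iid coordinates -- see the footnote in Section 2.) Your own observation that ``$(Y_j,Y_{j'})$ is a normalised sum of iid mean-zero pairs in $\R^2$ with identity covariance'' is already a proof of independence once you notice that each summand has independent components. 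With independence in hand, the boosting step is a one-line binomial tail estimate: if $\PP(|Y|\geq u)\geq c_1 k/n$ for a suitably large constant $c_1$, then $N_u=\sum_j\IND_{\{|Y_j|\geq u\}}$ is $\mathrm{Binomial}(n,p)$ with $np\geq c_1 k$, so $\PP(N_u\geq k)\geq 1/2$ directly, and $\E Y_k^\ast\geq u/2$. The second-moment/Paley--Zygmund machinery on $N_t$ is unnecessary.

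\paragraph{The single-coordinate lower tail.}
The real work in the lemma is the bound $\PP(|Y|\geq t)\geq c\exp(-Ct^2)$ for $t$ up to order $\sqrt{|I|}$, and here you only gesture at ``moderate deviations'' or ``$L_p$-norm growth plus Paley--Zygmund'' without establishing either. This is the genuine technical content. The paper makes it precise via Montgomery-Smith's inequality (Lemma~\ref{lem:MS}): symmetrise using $Z$'s symmetry, condition on $Z$ to reduce to a Bernoulli sum $\sum_{i\in I'}c_2\eps_i$ over the (high-probability) set $I'$ of indices with $|\inr{Z_i,e_1}|\geq c_2$, and then apply Montgomery-Smith. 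This is non-asymptotic and self-contained; a bare appeal to ``moderate deviations'' is not, and the $L_p$ lower-bound route you allude to would itself require a Khintchine-type argument essentially equivalent to Montgomery-Smith. This is the second gap to close.

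In short: the first part of the lemma ($\E|Y_k|\geq c$ via Paley--Zygmund) is fine and matches the paper. For the second part, recognise that $Z$'s iid coordinates make the $Y_j$'s independent -- this both removes your dependence concerns and explains why the iid-coordinates hypothesis is imposed -- and replace the vague moderate-deviations reference with a concrete lower-tail tool such as Montgomery-Smith.
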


The proof of Lemma \ref{lem:oder.stat} relies on the an estimate due to Montgomery-Smith \cite{montgomery1990distribution}.
To formulate it, denote by $\lfloor u \rfloor$  the largest integer that is smaller than $u$.

\begin{Lemma} \label{lem:MS}
	There are absolute constants $c_1,c_2$ and $c_3$ such that the following holds.
	Let $\eps=(\eps_i)_{i=1}^m$ be the Bernoulli vector and set $0< u\leq m-2$.
	Then, for every  $x\in\R^m$, with probability at least $c_1\exp(-c_2u)$,
	\begin{align}
	\label{eq:M.S}
	\sum_{i=1}^m  x_i \eps_i \geq
	c_3 \left( \sum_{i=1}^{\lfloor  u\rfloor}  x^\ast_i + \sqrt{u}  \left( \sum_{i= \lfloor u\rfloor + 1 }^m  (x^\ast_i)^2 \right)^{1/2}\right).
	\end{align}
\end{Lemma}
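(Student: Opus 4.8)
The plan is to reprove the distributional estimate of Montgomery-Smith \cite{montgomery1990distribution} by splitting $x$ into a ``heavy'' and a ``light'' part and controlling the two Rademacher sums separately. Since $(\eps_i)_{i=1}^m$ and $(\eps_i\sgn(x_i))_{i=1}^m$ have the same law, and permuting coordinates does not change the law of $(\eps_i)_{i=1}^m$, we may assume $x_1\geq x_2\geq\dots\geq x_m\geq 0$, so that $x_i=x_i^\ast$; we may also assume $u\geq 1$, since for $u<1$ the assertion is immediate from the Paley-Zygmund inequality applied to $S^2$, where $S=\sum_{i=1}^m x_i\eps_i$ (indeed $\E S^2=\|x\|_2^2$ and $\E S^4\leq 3\|x\|_2^4$, so $\PP(S\geq\tfrac12\|x\|_2)\geq c>0$, which dominates $c_1e^{-c_2u}$ for $u<1$). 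Put $k=\lfloor u\rfloor$, $A=\{1,\dots,k\}$, $B=\{k+1,\dots,m\}$ (note $|B|\geq 2$ because $u\leq m-2$), and write $S=S_A+S_B$ with $S_A=\sum_{i\in A}x_i\eps_i$, $S_B=\sum_{i\in B}x_i\eps_i$; crucially $(\eps_i)_{i\in A}$ and $(\eps_i)_{i\in B}$ are independent. On the event $\mathcal E_A=\{\eps_i=1\ \text{for all}\ i\in A\}$, of probability $2^{-k}\geq e^{-u}$, one has $S_A=\sum_{i=1}^k x_i^\ast$.

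Write $\sigma=\big(\sum_{i\in B}x_i^2\big)^{1/2}$ and $a=\max_{i\in B}x_i=x_{k+1}^\ast$, and split into two regimes. If $\sigma^2<u\,a^2$ (the light part effectively lives on fewer than $u$ coordinates), then $\sqrt u\,\sigma<u\,a\leq u\,x_k^\ast\leq 2\sum_{i=1}^k x_i^\ast$, using $\sum_{i=1}^k x_i^\ast\geq k\,x_k^\ast$ and $k=\lfloor u\rfloor\geq u/2$. Hence on $\mathcal E_A\cap\{S_B\geq 0\}$, of probability at least $\tfrac12\,2^{-k}$ by independence and symmetry of $S_B$, we obtain $S\geq\sum_{i=1}^k x_i^\ast\geq\tfrac13\big(\sum_{i=1}^k x_i^\ast+\sqrt u\,\sigma\big)$, which is the desired inequality. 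In the complementary regime $\sigma\geq\sqrt u\,a$, I would establish the moderate-deviation lower bound $\PP(S_B\geq c\sqrt u\,\sigma)\geq c_1 e^{-c_2 u}$ by an exponential tilt: set $dQ=e^{\theta S_B}/\big(\E e^{\theta S_B}\big)\,dP$ with $\theta=C_0\sqrt u/\sigma$ for a small absolute constant $C_0$. It is exactly the regime hypothesis $\sigma\geq\sqrt u\,a$ that gives $\theta a=C_0\sqrt u\,a/\sigma\leq C_0$, keeping the tilt in the sub-Gaussian range: under $Q$ the signs $(\eps_i)_{i\in B}$ remain independent, $\E_Q S_B=\sum_{i\in B}x_i\tanh(\theta x_i)$ is of order $\theta\sigma^2\sim\sqrt u\,\sigma$, and $\mathrm{Var}_Q(S_B)=\sum_{i\in B}x_i^2\big(1-\tanh^2(\theta x_i)\big)$ is of order $\sigma^2$. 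So by Chebyshev $S_B\geq c\sqrt u\,\sigma$ on a $Q$-event of probability at least $\tfrac34$, once $u$ exceeds an absolute constant (smaller $u$ being absorbed into the Paley-Zygmund case after shrinking $c$). Transferring back through $dP/dQ=\big(\E e^{\theta S_B}\big)e^{-\theta S_B}\geq e^{-\theta S_B}$ and restricting to the window $\{c\sqrt u\,\sigma\leq S_B\leq c\sqrt u\,\sigma+C\sigma\}$ gives $\PP(S_B\geq c\sqrt u\,\sigma)\geq e^{-\theta(c\sqrt u\,\sigma+C\sigma)}\cdot\tfrac34\geq c_1 e^{-c_2 u}$, since $\theta\sqrt u\,\sigma\sim u$ and $\theta\sigma\sim\sqrt u\leq u$. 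Intersecting this event with $\mathcal E_A$, again by independence, yields a single event of probability at least $c_1 e^{-c_2 u}$ on which $S\geq\sum_{i=1}^k x_i^\ast+c\sqrt u\,\sigma$.

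Combining the two regimes proves the lemma, with $c_3$ an absolute constant; no final symmetrization is needed because the construction already produces a one-sided bound. The step I expect to be the main obstacle is the tail estimate in the spread-out regime: one has to choose the tilt $\theta$ and the window width so that everything is uniform over all $1\leq u\leq m-2$ and over arbitrary coefficient vectors, which amounts to a careful but routine optimisation together with elementary second-moment estimates for the tilted measure $Q$. A shorter alternative that I would also record is to invoke the $L_p$-norm comparison $\big\|\sum_i x_i\eps_i\big\|_{L_p}\sim\sum_{i\leq p}x_i^\ast+\sqrt p\,\big(\sum_{i>p}(x_i^\ast)^2\big)^{1/2}$ for $p\geq 1$ (Hitczenko) and apply Paley-Zygmund to $|S|^p$ with $p\sim u$: since $\|S\|_{L_{2p}}\leq D\|S\|_{L_p}$ for an absolute $D$, one gets $\PP\big(|S|^p\geq\tfrac12\E|S|^p\big)\geq\tfrac14 D^{-2p}\geq c_1 e^{-c_2 u}$, hence $|S|\geq c\|S\|_{L_p}$ with that probability, and one discards the negative half by the symmetry of $S$.
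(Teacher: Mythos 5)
The paper does not prove Lemma~\ref{lem:MS}; it cites it directly from Montgomery-Smith \cite{montgomery1990distribution}, so there is no in-paper proof to compare against. What you have written is an independent reconstruction, and its skeleton is sound and is essentially the standard way one proves this estimate: reduce to non-negative, non-increasing $x$; force the $k=\lfloor u\rfloor$ largest coordinates to enter with positive sign at the cost of a factor $2^{-k}\ge e^{-u\log 2}$; then control the tail sum $S_B=\sum_{i>k}x_i\eps_i$ either trivially (when $\sum_{i>k}x_i^2<u\,(x^\ast_{k+1})^2$, use $\PP(S_B\ge 0)\ge 1/2$) or by an exponential change of measure (when $\sum_{i>k}x_i^2\ge u\,(x^\ast_{k+1})^2$, which is precisely the condition keeping the tilt $\theta=C_0\sqrt u/\sigma$ in the linear range $\theta\,x^\ast_{k+1}\le C_0$). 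The reduction to one-sided bounds by symmetry, the Paley--Zygmund absorption of small $u$, and the independence of the blocks $A$ and $B$ are all handled correctly, as is the alternative route via the Hitczenko $L_p$-comparison together with Paley--Zygmund for $|S|^p$ with $p\sim u$, which is indeed a cleaner argument if one is willing to cite the two-sided $L_p$-bound as a black box.

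There is one concrete slip in the tilting step that you should repair. You restrict to the window $\{c\sqrt u\,\sigma \le S_B \le c\sqrt u\,\sigma + C\sigma\}$ with $c$ and $C$ absolute constants and then claim this event has $Q$-probability at least $3/4$. But under $Q$ the mean of $S_B$ is of order $\sqrt u\,\sigma$ and its standard deviation is of order $\sigma$. If $c$ is a fixed constant strictly smaller than the mean coefficient, then for large $u$ the entire window $[c\sqrt u\,\sigma,\ c\sqrt u\,\sigma+C\sigma]$ lies a distance $\gtrsim\sqrt u\,\sigma\gg\sigma$ below the $Q$-mean, so its $Q$-probability tends to $0$, not $3/4$; if instead $c$ is at least the mean coefficient, the window lies above the mean and its $Q$-probability is at most $\tfrac12$. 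The fix is simple and does not change the conclusion: take the window centred at the $Q$-mean, say $W=\{\,\E_Q S_B - 2\sigma \le S_B \le \E_Q S_B + 2\sigma\,\}$, so that $Q(W)\ge 1/2$ by Chebyshev. For $u$ above an absolute constant one then still has $S_B \ge \E_Q S_B - 2\sigma \ge \tfrac13 C_0\sqrt u\,\sigma$ on $W$, and on $W$ one also has $\theta S_B \le \theta(\E_Q S_B + 2\sigma) \le C_0^2 u + 2C_0\sqrt u \le 3C_0^2 u$, giving $P(W)\ge \tfrac12 e^{-3C_0^2 u}$ as desired. With this correction your argument gives a complete proof of the lemma.
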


\begin{proof}[Proof of Lemma \ref{lem:oder.stat}]
	We only present a proof of \eqref{eq:def.Y.rearrangment}.
	The proof that $\min_{1\leq k\leq n}\E|Y_k|\geq c$ follows  a similar (but simpler) path.

	Since $Z$ has iid coordinates, the random variables $(Y_j)_{j=1}^n$ are independent and each $Y_j$ has the same distribution as $Y=Y_1$.
	It is straightforward to verify that if $u$ satisfies that $\PP ( |Y|\geq  u ) \geq c_1 k/n$, then $\PP( Y^\ast_k \geq u) \geq 1/2$ as long as $c_1$ is a well-chosen absolute constant; in particular,  $\E Y^\ast_k \geq u/2$.
	The rest of the proof is devoted to establishing that $u$ of the order $\sqrt{\log \frac{n}{k}}$ is a valid choice.
	
	Since $Z$ is $L$-subgaussian and isotropic, by the Paley–Zygmund inequality,  there are constants $c_2$ and $c_3$ depending only on $L$ such that $\PP(|\inr{Z,e_1}|\geq c_2)\geq 2c_3$.
		Let $\varepsilon=(\varepsilon_i)_{i=1}^m$ be the Bernoulli vector, independent of $(Z_i)_{i=1}^m$.
	Setting
	\[ I'=\{ i \in I :  |\inr{Z_i,e_1}|\geq c_2 \}, \]
	then clearly  $\PP\left( |I'|\geq c_3 |I|\right)\geq 1/2$.
	By the contraction inequality for Bernoulli processes 	and the symmetry of $Z$,
	\begin{align*}
	\PP ( |Y| \geq u )
	&= \E_Z \PP_\eps\left( \left| \frac{1}{\sqrt m} \sum_{i\in I}  \eps_i \inr{Z_i, e_1} \right| \geq u \right) \\
	&\geq \E_Z \IND_{\{ |I'|\geq c_3 |I| \} } \frac{1}{2}\PP_\eps\left( \left| \frac{1}{\sqrt m} \sum_{i\in I'}  c_2 \eps_i \right| \geq u \right).
	\end{align*}
	Fix a realization  $(Z_i)_{i=1}^n$ satisfying  $|I'|\geq c_3|I|$.
	It suffices to show that
	\begin{align}
	\label{eq:some.equation.gaussian.approx}
	\PP_\eps\left( \left| \frac{1}{\sqrt m} \sum_{i\in I'}  c_2 \eps_i \right| \geq c_4 \sqrt{ \log\left(\frac{n}{k} \right) } \right) \geq \frac{ 4c_1 k}{n}
	\end{align}
	for a suitable constant $c_4$. 
	To that end, by Lemma \ref{lem:MS} there are absolute constants $c_5,c_6,c_7$ such that for $0\leq u^2 \leq |I'|-2$, with probability at least $c_5\exp(-c_6 u^2)$,
	\begin{align*}
  \left| \sum_{i\in I'} \frac{c_2}{\sqrt m}  \eps_i \right|
	 \geq c_7 \left( \sum_{i=1}^{ \lfloor u^2 \rfloor} \frac{c_2}{\sqrt m} + u \left(\sum_{i=\lfloor u^2 \rfloor+1}^{|I'| } \frac{c_2^2}{ m} \right)^{1/2} \right)
	 =A(u).
	 \end{align*}
	 Let $u=c_8\sqrt{\log \frac{n}{k}}$ for a (small) constant $c_8=c_8(\delta)$ and observe that for that  choice of $u$, $A(u)\geq c_9(\delta) u$.
	 Indeed, as was assumed, 
	 \[|I'|\geq c_3 |I| \geq c_3\delta m\geq  c_3\delta \log n;\]
	 in particular  $|I'|-(\lfloor u^2 \rfloor +1 )\geq \frac{c_3 \delta m}{2}$ and thus $A(u)\geq c_7 u (\sum_{i=\lfloor u^2 \rfloor+1}^{|I'| } \frac{c_2^2}{ m} )^{1/2} \geq c_9(\delta)u$.
	 Therefore
	 \begin{align}
	 \label{eq:mont.bound}
	 \PP_\eps\left(\left| \frac{1}{\sqrt m} \sum_{i\in I'}  c_2 \eps_i \right| \geq c_{9} \cdot u \right)
	 &\geq c_5 \exp\left( -c_6 c_8^2  \log \left( \frac{n}{k} \right) \right),
	 \end{align}
	 and  if $c_8$ is sufficiently small and $k\leq c_{10} n$, then the right hand side in \eqref{eq:mont.bound} is at least $c_1\frac{k}{n}$.
	 This completes the proof.
\end{proof}

\begin{proof}[{\bf Proof of Theorem \ref{thm:lower.bound.expectation}}]	
	Set
	\[I=\{i\in\{1,\dots,m\} : |\inr{X_i,v}|\geq \eta\}\]
	and thus $|I|\geq \delta m$.
	Let $\varepsilon=(\varepsilon_i)_{i=1}^m$ be the Bernoulli vector, independent of $(Z_i)_{i=1}^m$.
	By the symmetry of $Z$ and the contraction principle for Bernoulli processes,
	\begin{align}
	\label{eq:compare.Z.G.1}
	\begin{split}
	\E_Z \sup_{t\in K^\circ} \sum_{i=1}^m \frac{1}{\sqrt m}\inr{X_i,v} \inr{Z_i,t}
	&=\E_{Z} \E_\eps \sup_{t\in K^\circ} \sum_{i=1}^m \eps_i \frac{1}{\sqrt m}\inr{X_i,v} \inr{Z_i,t} \\
	&\geq  \E_{Z} \E_\eps \sup_{t\in K^\circ} \sum_{i\in I} \eps_i \frac{1 }{\sqrt m}\eta  \inr{Z_i,t}.
	\end{split}
	\end{align}
	Also, for every $t\in K^\circ$, using the symmetry of $Z$ once again,
	\begin{align*}
	\sum_{i\in I} \eps_i  \frac{1}{\sqrt m} \inr{Z_i,t}
	&=\sum_{i\in I} \sum_{j=1}^n \frac{1}{\sqrt m}  \eps_i \inr{Z_i,e_j} t_j \\
	&=\sum_{j=1}^n \left(  \sum_{i\in I}  \frac{1}{\sqrt m}  \eps_i \inr{Z_i,e_j} \right) t_j
	= \sum_{j=1}^n  Y_j t_j.
	\end{align*}
	In particular, it follows from  \eqref{eq:compare.Z.G.1}  that
	\begin{align}
	\label{eq:compare.Z.G.2}
	\E_Z \sup_{t\in K^\circ} \sum_{i=1}^m \inr{X_i,v} \inr{Z_i,t}
	\geq \eta \E_{Z} \sup_{t\in K^\circ} \sum_{j=1}^n   Y_j t_j.
	\end{align}
	To complete the proof one has to ``replace'' $(Y_j)_{j=1}^m$ by independent gaussian random variables $(g_j)_{j=1}^m$. To that end, observe that by Lemma \ref{lem:oder.stat} there is a constant $c_1=c_1(L,\delta)$ such that for every $1\leq k\leq n$, $\E|Y_k|\geq c_1$.
	And, by Lemma \ref{lem:permutation.sorting.shuffeling},
	\begin{align*}
	&\E_{G} \sup_{t\in K^\circ} \sum_{j=1}^m g_j t_j
	=\E_{G,\pi,\eps} \sup_{t\in K^\circ} \sum_{j=1}^n g_j^\ast \eps_{\pi(j)} t_{\pi(j)} \\
	&=\E_{G}\E_{\eps,\pi} \sup_{t\in K^\circ} \sum_{j=1}^n \left( \frac{ g_j^\ast }{ \E_Y Y^\ast_j + c_1 }  \right) ( \E_Y Y^\ast_j + c_1)\eps_{\pi(j)} t_{\pi(j)} \\
	&\leq \left( \E_G \max_{1\leq j\leq n} \frac{ g_j^\ast }{ \E_Y Y^\ast_j + c_1 } \right) \cdot \left(
	 \E_{\eps ,\pi}\sup_{t\in K^\circ} \sum_{j=1}^n  ( \E_Y Y^\ast_j + c_1)\eps_{\pi(j)} t_{\pi(j)} \right)
	 =(\ast) \cdot (\ast\ast),
	\end{align*}
	where the last inequality follows from the contraction principle for Bernoulli processes (applied conditionally with respect to $G$).
	
	All that remains is to  show that $(\ast)\leq c(c_1,L,\delta)$ and that $(\ast\ast)\leq 2\E_Y \sup_{t\in K^\circ} \sum_{j=1}^n Y_jt_j$.
	
	\vspace{0.5em}
	To estimate the first term in $(\ast\ast)$, note that by Jensen's inequality followed by Lemma \ref{lem:permutation.sorting.shuffeling},
	\begin{align*}
	\E_{\eps,\pi} \sup_{t\in K^\circ} \sum_{j=1}^n  ( \E_Y Y^\ast_j ) \eps_{\pi(j)} t_{\pi(j)}
	&\leq \E_{\eps,\pi,Y} \sup_{t\in K^\circ} \sum_{j=1}^n Y^\ast_j\eps_{\pi(j)} t_{\pi(j)} \\
	&= \E_{Y} \sup_{t\in K^\circ} \sum_{j=1}^n  Y_j t_j.
	\end{align*}
	As for the second term in $(\ast\ast)$,  invoking Lemma \ref{lem:oder.stat},  $\E_Y |Y_j|\geq c_1$ for $1\leq j\leq n$.
	Hence, by  contraction
	\begin{align*}
\E_{\eps,\pi} \sup_{t\in K^\circ} \sum_{j=1}^n  c_1\eps_{\pi(j)} t_{\pi(j)}
	&\leq \E_{\eps,\pi}  \sup_{t\in K^\circ} \sum_{j=1}^n \E_Y |Y_{\pi(j)}| \eps_{\pi(j)} t_{\pi(j)} \\
	&\leq \E_Y \sup_{t\in K^\circ} \sum_{j=1}^n Y_j t_j,
	\end{align*}
	as required.
	
	\vspace{0.5em}
To control $(\ast)$, a standard binomial estimate shows that there are absolute constants $c_2$ and $c_3$ such that for $u\geq c_3$,
	 \begin{align}
	 \label{eq:gauss.order.stat}
	 \PP\left( g^\ast_j\geq  u \sqrt{ \log (en/j)} \right)
	 	\leq 2\exp\left(-c_4 u^2 j\log (en/j) \right).
	 \end{align}
	By Lemma \ref{lem:oder.stat} there is $c_5=c_5(L,\delta)$ such that  $\E_Y Y^\ast_j + c_1 \geq c_5\sqrt{\log (en/j)}$ for every $1\leq j\leq n$.
	Thus, \eqref{eq:gauss.order.stat} and the union bound imply that
	 \begin{align*}
	 \PP_G\left( \max_{1\leq j\leq n} \frac{ g_j^\ast }{ \E_Y Y^\ast_j + c_1 } \geq \frac{u}{c_5} \right)
	 &\leq \sum_{j=1}^n 2\exp \left(-c_4 u^2 j\log \left(\frac{en}{j}\right)  \right)\\
	 &\leq 2\exp\left(-c_6 u^2\right),
	 \end{align*}
	and the claim follows from tail-integration.
\end{proof}

\vspace{1em}
\noindent
{\bf Acknowledgements:}
The first author is grateful for financial support through the Austrian Science Fund (FWF) projects ESP-31N and P34743N.

\bibliographystyle{abbrv}

\end{document}